\newcommand{\R}{\mathbb{R}}
\newcommand{\To}{\mathbb{T}}
\newcommand{\Sp}{\mathbb{S}}
\newcommand{\T}{\mathcal{T}}
\newcommand{\control}[2]{ ($#2+#1$) .. #1}
\newcommand\tikzmark[5][0]{\tikz[overlay,remember picture,baseline] \node [rotate=#1,anchor=base,xshift=#4,yshift=#5] (#2) {$\scriptstyle#3$};}
\newcommand\modulo[3]{\pgfmathparse{mod (#1,#2)}\pgfmathtruncatemacro{#3}{\pgfmathresult}}
\newcommand\euclidiv[3]{\pgfmathparse{int(#1/#2)}\pgfmathtruncatemacro{#3}{\pgfmathresult}}
\newcommand{\cross}{\operatorname{cr}}
\newcommand{\addcross}{\operatorname{Top}}
\newcommand{\add}{\operatorname{Add}}
\newcommand{\unlink}{\operatorname{CC}}
\newtheorem{theorem}{Theorem}[section]
\newtheorem{definition}[theorem]{Definition}
\newtheorem{proposition}[theorem]{Proposition}
\newtheorem{remark}[theorem]{Remark}
\newtheorem{lemma}[theorem]{Lemma}
\newtheorem{conjecture}[theorem]{Conjecture}
\title{Hard diagrams of split links} 
\author{Corentin Lunel\thanks{INRIA Université Côte d'Azur, Montpellier, France, corentin.lunel@inria.fr} \and Arnaud de Mesmay\thanks{LIGM, CNRS, Univ. Gustave Eiffel, ESIEE Paris, F-77454 Marne-la-Vall\'ee, France, arnaud.de-mesmay@univ-eiffel.fr} \and Jonathan Spreer\thanks{School of Mathematics and Statistics, University of Sydney, Australia, jonathan.spreer@sydney.edu.au}}
\date{}
\begin{document}
\maketitle

\begin{abstract}
Deformations of knots and links in ambient space can be studied combinatorially on their diagrams via local modifications called Reidemeister moves. While it is well-known that, in order to move between equivalent diagrams with Reidemeister moves, one sometimes needs to insert excess crossings, there are significant gaps between the best known lower and upper bounds on the required number of these added crossings. In this article, we study the problem of turning a diagram of a split link into a split diagram, and we show that there exist split links with diagrams requiring an arbitrarily large number of such additional crossings. More precisely, we provide a family of diagrams of split links, so that any sequence of Reidemeister moves transforming a diagram with $c$ crossings into a split diagram requires going through a diagram with $\Omega(\sqrt{c})$ extra crossings. Our proof relies on the framework of bubble tangles, as introduced by the first two authors, and a technique of Chambers and Liokumovitch to turn homotopies into isotopies in the context of Riemannian geometry.
\end{abstract}

\noindent
{\bf Keywords:} Knot theory, hard knot and link diagrams, Reidemeister moves, extra crossings, split links, bubble tangles, compression representativity.

\smallskip

\noindent
{\bf MSC2020:} 57K10; 
57Q37, 
57K30  

\section{Introduction}\label{sec_intro}

The Reidemeister theorem~\cite{Reidemeister_reidemeister_th} is a fundamental and powerful result in low-dimensional topology. It states that any two link diagrams represent equivalent links\footnote{A link is an embedding of a collection of circles into $\R^3$ and two links are considered equivalent if they are ambient isotopic. Link diagrams are projections of links into a plane as shown, for instance, in \Cref{pic_goeritz}. We refer to~\Cref{sec_prelim} for standard definitions in knot theory.} if and only if they can be related by a sequence of planar isotopies and local moves, called \textbf{Reidemeister moves}, pictured in \Cref{pic_Reide_move}. This theorem is at the heart of many theoretical results as well as computational applications. Indeed, many knot invariants, from the most basic ones such as tri-colourability~\cite[Section~1.5]{Adams_knotbook} to more advanced ones such as the Jones polynomial~\cite{Kauffman_Jones_poly} or Khovanov homology~\cite{bar2002khovanov}, can be shown to be invariants by demonstrating that they are not modified by Reidemeister moves. From a computational point of view, the Reidemeister theorem allows for a discretisation of the space of possible transformations to consider when testing for knot equivalence. This fact is at the root of a straightforward algorithm to study algorithmic problems in knot theory: at the level of diagrams, apply Reidemeister moves in a random or brute-force manner until a desired property is verified. 

\begin{figure}[ht]
\begin{center}
\begin{tikzpicture}[scale=0.95]
\clip (-0.1,-0.1) rectangle (14.4,2.1);
\begin{scope}[very thick]
\draw (0,0) -- +(0,2);
\draw (1.5, 0) .. controls +(90:0.7) and \control{($(-135:0.1)+(1.75,1)$)}{(-135:0.3)}; 
\draw ($(45:0.1)+(1.75,1)$) .. controls+(45:0.7) and \control{($(-45:0.1)+(1.75,1)$)}{(-45:0.7)} -- +(135:0.2) .. controls +(135:0.3) and \control{(1.5,2)}{(-90:0.7)};
\draw [Stealth-Stealth] (0.25,1) -- (1.25,1) node [midway, above] {RI};

\begin{scope}[xshift=3.75cm]
\draw (0,0) -- ++(0,2);
\draw (1,0) -- ++(0,2);

\draw [Stealth-Stealth] (1.25,1) -- (2.25,1) node [midway, above] {RII};

\draw (2.5,0) .. controls +(60:0.75) and \control{(3.5,1)}{(-90:0.3)} .. controls +(90:0.3) and \control{(2.5,2)}{(-60:0.75)}; 
\draw [line width = 0.2 cm, white] (3.5,0) .. controls +(120:0.75) and \control{(2.5,1)}{(-90:0.3)} .. controls +(90:0.3) and \control{(3.5,2)}{(-120:0.75)}; 
\draw (3.5,0) .. controls +(120:0.75) and \control{(2.5,1)}{(-90:0.3)} .. controls +(90:0.3) and \control{(3.5,2)}{(-120:0.75)}; 
\end{scope}

\begin{scope}[xshift=8.75cm]
\draw (0,0) -- ++(2,2);
\draw [line width = 0.2 cm, white] (2,0) -- ++(-2,2);
\draw (2,0) -- ++(-2,2);
\draw [line width = 0.2 cm, white] (0,0.6) -- ++(2,0);
\draw (0,0.6) -- ++(2,0);

\draw [Stealth-Stealth] (2.25,1) -- (3.25,1) node [midway, above] {RIII};

\draw (3.5,0) -- ++(2,2);
\draw [line width = 0.2 cm, white] (5.5,0) -- ++(-2,2);
\draw (5.5,0) -- ++(-2,2);

\draw [line width = 0.2 cm, white] (3.5,1.4) -- ++(2,0);
\draw (3.5,1.4) -- ++(2,0);
\end{scope}
\end{scope}
\end{tikzpicture}
\caption{The three Reidemeister moves RI, RII, and RIII.}
\label{pic_Reide_move}
\end{center}
\end{figure}
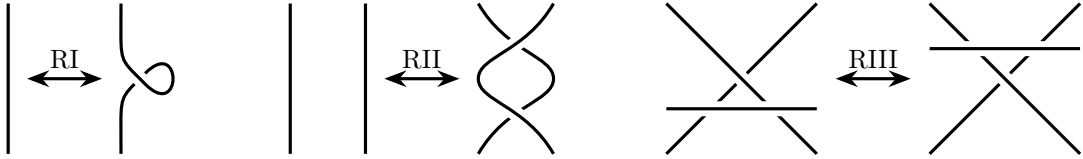

A primary example of such a knot theory problem is recognising the \textbf{unknot}, that is, the unique knot admitting a diagram with no crossings. This is a first instance of the fundamental problem of knot theory of deciding whether two knots are equivalent. It turns out that some unknot diagrams, called \textbf{hard unknot diagrams} or \textbf{culprits}~\cite{Mesmay_hard_unknot,Kauffman_untangling}, exhibit an unwanted behaviour for the above algorithm. Namely, given an initial diagram $\mathcal{D}$ of the unknot, the largest number of crossings of a diagram in any sequence of Reidemeister moves from $\mathcal{D}$ to the $0$-crossing diagram must be larger than in $\mathcal{D}$. This means one first needs to \emph{add} crossings before being able to reach the $0$-crossing diagram. The existence of such diagrams implies that it is not possible to untangle an unknot diagram by only applying Reidemeister moves that do not increase the number of crossings. An example of such a hard unknot diagram, called the \textbf{Goeritz culprit}, is shown in \Cref{pic_goeritz}. In~\cite{Mesmay_hard_unknot}, it is shown that, when diagrams in $\mathbb{S}^2$ are considered, at least one extra crossing is required to untangle this unknot. We do not know of systematic techniques or methods to prove easily that a given diagram of the unknot is hard. Instead, all proofs known to the authors resort to an exhaustive search in the graph of Reidemeister moves, which quickly becomes infeasible. Recently, new techniques based on reinforcement learning have been applied to find large numbers of hard unknot diagrams~\cite{applebaum2024unknotting}, but the proofs of hardness still involved exhaustive enumerations.

\begin{figure}[ht]
\begin{center}
\begin{tikzpicture}[scale=0.8]
\def\e{0.2}
\def\a{45}
\def\ln{0.45}
\def\il{1}
\def\ol{1}
\def\ll{2.5}
\def\lm{3}
\clip (-7,-2.5) rectangle (6,2.5);

\coordinate (l0) at (-6,0);
\coordinate (l1) at (-5,0);
\coordinate (l2) at (-4,0);
\coordinate (l3) at (-3,0);
\coordinate (u0) at (-0.5,1.5);
\coordinate (u1) at (0.5,1.5);
\coordinate (b0) at (-0.5,-1.5);
\coordinate (b1) at (0.5,-1.5);
\coordinate (r0) at (3,0);
\coordinate (r1) at (4,0);
\coordinate (r2) at (5,0);

\begin{scope}[thick]
\begin{scope}[even odd rule]
\clip(-7,-2.5) rectangle (6,2.5) (l1) circle (\e cm);
\clip(-7,-2.5) rectangle (6,2.5) (u1) circle (\e cm);
\draw (l1) .. controls +(\a:-\ln) and \control{(l0)}{(-\a:\ln)} .. controls +(-\a:-\lm) and \control{(u0)}{(-\a:-\ll)} .. controls +(-\a:\ln) and \control{(u1)}{(\a:-\ln)};
\end{scope}
\begin{scope}[even odd rule]
\clip(-7,-2.5) rectangle (6,2.5) (l0) circle (\e cm);
\clip(-7,-2.5) rectangle (6,2.5) (l2) circle (\e cm);
\draw (l0) .. controls +(\a:\ln) and \control{(l1)}{(-\a:-\ln)} .. controls +(-\a:\ln) and \control{(l2)}{(\a:-\ln)};
\end{scope}
\begin{scope}[even odd rule]
\clip(-7,-2.5) rectangle (6,2.5) (l1) circle (\e cm);
\clip(-7,-2.5) rectangle (6,2.5) (l3) circle (\e cm);
\draw (l1) .. controls +(\a:\ln) and \control{(l2)}{(-\a:-\ln)} .. controls +(-\a:\ln) and \control{(l3)}{(\a:-\ln)};
\end{scope}
\begin{scope}[even odd rule]
\clip(-7,-2.5) rectangle (6,2.5) (b0) circle (\e cm);
\clip(-7,-2.5) rectangle (6,2.5) (l2) circle (\e cm);
\draw (l2) .. controls +(\a:\ln) and \control{(l3)}{(-\a:-\ln)} .. controls +(-\a:\il) and \control{(b0)}{(-\a:-\ol)};
\end{scope}
\begin{scope}[even odd rule]
\clip(-7,-2.5) rectangle (6,2.5) (u0) circle (\e cm);
\clip(-7,-2.5) rectangle (6,2.5) (l3) circle (\e cm);
\draw (l3) .. controls +(\a:\il) and \control{(u0)}{(\a:-\ol)};
\end{scope}
\begin{scope}[even odd rule]
\clip(-7,-2.5) rectangle (6,2.5) (l0) circle (\e cm);
\clip(-7,-2.5) rectangle (6,2.5) (b1) circle (\e cm);
\draw (l0) .. controls +(\a:-\lm) and \control{(b0)}{(\a:-\ll)} .. controls +(\a:\ln) and \control{(b1)}{(-\a:-\ln)};
\end{scope}
\begin{scope}[even odd rule]
\clip(-7,-2.5) rectangle (6,2.5) (b0) circle (\e cm);
\clip(-7,-2.5) rectangle (6,2.5) (r1) circle (\e cm);
\draw (b0) .. controls +(-\a:\ln) and \control{(b1)}{(\a:-\ln)} .. controls +(\a:\ol) and \control{(r0)}{(\a:-\il	)}.. controls +(\a:\ln) and \control{(r1)}{(-\a:-\ln)};
\end{scope}
\begin{scope}[even odd rule]
\clip(-7,-2.5) rectangle (6,2.5) (r0) circle (\e cm);
\clip(-7,-2.5) rectangle (6,2.5) (r2) circle (\e cm);
\draw (r0) .. controls +(-\a:\ln) and \control{(r1)}{(\a:-\ln)} .. controls +(\a:\ln) and \control{(r2)}{(-\a:-\ln)};
\end{scope}
\begin{scope}[even odd rule]
\clip(-7,-2.5) rectangle (6,2.5) (u0) circle (\e cm);
\clip(-7,-2.5) rectangle (6,2.5) (r0) circle (\e cm);
\draw (u0) .. controls +(\a:\ln) and \control{(u1)}{(-\a:-\ln)} .. controls +(-\a:\ol) and \control{(r0)}{(-\a:-\il)};
\end{scope}
\begin{scope}[even odd rule]
\clip(-7,-2.5) rectangle (6,2.5) (u1) circle (\e cm);
\clip(-7,-2.5) rectangle (6,2.5) (r1) circle (\e cm);
\draw (u1) .. controls +(\a:\ll) and \control{(r2)}{(\a:\lm)} .. controls +(\a:-\ln) and \control{(r1)}{(-\a:\ln)};
\end{scope}
\begin{scope}[even odd rule]
\clip(-7,-2.5) rectangle (6,2.5) (b1) circle (\e cm);
\clip(-7,-2.5) rectangle (6,2.5) (r2) circle (\e cm);
\draw (r2) .. controls +(-\a:\lm) and \control{(b1)}{(-\a:\ll)};
\end{scope}
\end{scope}
\end{tikzpicture}
\caption{The Goeritz culprit: using Reidemeister moves in $\mathbb{S}^2$, one must add at least one extra crossing to untangle this unknot diagram.}
\label{pic_goeritz}
\end{center}
\end{figure}
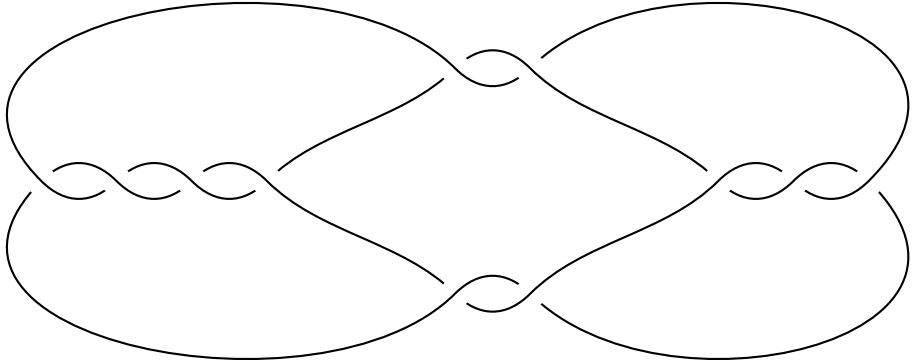

Following the notations of \cite{Mesmay_hard_unknot, Kauffman_untangling} we denote the number of crossings in a diagram $\mathcal{D}$ by $\cross(\mathcal{D})$. For diagrams $\mathcal{D}_1$ and $\mathcal{D}_2$ of equivalent links, and a sequence $R$ of $r$ Reidemeister moves transforming $\mathcal{D}_1$ into $\mathcal{D}_2$, we define \textbf{$\text{Top}(\mathcal{D}_1,R)$}$ = \max_{i\in \{0, 1, \ldots , r\}} \{\cross(\mathcal{D}^i) - \cross(\mathcal{D}_1)\}$ where $\mathcal{D}^i$, $0\leq i \leq r$, is the diagram after performing the first $i$ moves of the sequence. The minimal number of extra crossings to pass from $\mathcal{D}_1$ to $\mathcal{D}_2$ is denoted by \textbf{$\text{Add}(\mathcal{D}_1,\mathcal{D}_2)$}, which, formally, is the minimum of $\addcross(\mathcal{D}_1,R)$ taken over all sequences of Reidemeister moves $R$ transforming $\mathcal{D}_1$ into $\mathcal{D}_2$. Hence, $\add(\mathcal{D}_1,\mathcal{D}_2)$ is a lower bound on the number of crossings we must add in any sequence of Reidemeister moves performed on $\mathcal{D}_1$ to reach $\mathcal{D}_2$. 

Let $\mathcal{D}$ be a diagram of the unknot and $\hat{\mathcal{D}}$ be its $0$-crossing diagram. Naturally, $\mathcal{D}$ is a hard unknot diagram if and only if $\add(\mathcal{D},\hat{\mathcal{D}})$ is positive. This measure of complexity is called $m$ in \cite{Mesmay_hard_unknot} (see also the \emph{recalcitrance} in~\cite{Kauffman_untangling}). Studying these complexity measures and hard unknot diagrams is trickier than one might think. In fact, one of the purposes of \cite{Mesmay_hard_unknot} is to confirm or invalidate claims about previously known hard unknot diagrams using an exhaustive computer search over all possible sequences of Reidemeister moves. Still, the ``hardest'' known diagrams of the unknot have only been verified to require at least three extra crossings before they can be untangled. In contrast, the following conjecture is folklore.

\begin{conjecture}
  \label{conjm}
  Let $m$ be an integer and let $\hat{\mathcal{D}}$ be the $0$-crossing diagram of the unknot. Then there exists a diagram of the unknot $\mathcal{D}$ with $n$ crossings such that any sequence of Reidemeister moves from $\mathcal{D}$ to $\hat{\mathcal{D}}$ passes through a diagram with at least $n+m$ crossings.
\end{conjecture}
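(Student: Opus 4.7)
The natural strategy is to leverage the paper's main result for split links and transfer the hardness to the unknot case. Given a target $m$, I would use the split link construction to obtain a non-split diagram $\mathcal{D}_{sl}$ of a split link $L_1 \sqcup L_2$ with $c = \Theta(m^2)$ crossings such that any sequence of Reidemeister moves splitting $\mathcal{D}_{sl}$ must pass through a diagram with $\Omega(\sqrt{c}) = \Omega(m)$ extra crossings. First I would arrange, by an appropriate choice in that construction, that both $L_1$ and $L_2$ are unknots themselves. Then I would connect $L_1$ and $L_2$ with a short band in a controlled region of the diagram, producing a diagram $\mathcal{D}$ of the unknot whose crossing count is only a bounded additive amount larger than $\cross(\mathcal{D}_{sl})$.

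Next, I would argue that any sequence of Reidemeister moves bringing $\mathcal{D}$ down to the $0$-crossing diagram $\hat{\mathcal{D}}$ induces, after ignoring the banding arc, a Reidemeister sequence from $\mathcal{D}_{sl}$ to a split diagram of $L_1 \sqcup L_2$. This is the heart of the argument: by tracking the band throughout the isotopy, one should be able to identify a moment at which cutting the band reveals a split configuration of the underlying two components. Combined with the split link lower bound, this would force $\Omega(m)$ extra crossings in the original unknot sequence, yielding the conjecture.

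To make this tracking rigorous, I would rely on the bubble tangle framework together with the Chambers-Liokumovitch technique already central in this paper. The bubble tangle provides a combinatorial obstruction to splitting, while Chambers-Liokumovitch converts homotopies of spanning surfaces into isotopies with controlled complexity. The key technical step is to show that the bubble tangle of $\mathcal{D}_{sl}$, suitably modified by the banding, persists inside $\mathcal{D}$ as an obstruction whose removal forces many extra crossings to appear.

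The main obstacle is that the band connecting $L_1$ and $L_2$ can potentially be simplified \emph{before} the underlying link is split, which might shortcut the lower bound: nothing a priori prevents a clever sequence from ``sliding'' the band around the two components and collapsing the entire diagram without ever producing an intermediate split configuration. Controlling this scenario requires a careful local construction ensuring that the band cannot migrate or disappear without first isotoping away the split link's complexity, a phenomenon that has no direct analog in the split link setting. An alternative route, avoiding the reduction altogether, would be to develop a native bubble tangle theory for spanning disks rather than separating spheres, which would directly address the unknot, but requires substantial new geometric input to handle the moving boundary of the disk.
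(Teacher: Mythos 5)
The statement you are trying to prove is \Cref{conjm}, which is an open conjecture in this paper: the authors explicitly do not prove it, and they even note that the proof claimed in the recalcitrance literature has had doubts raised about it. The paper's actual theorem (\Cref{th_unlink_analysed}) is only the split-link analogue, so your proposal cannot be compared to a proof in the paper --- it has to stand on its own, and it does not.

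There are two genuine gaps. First, the reduction step is unjustified, and you essentially concede this yourself: a sequence of Reidemeister moves taking the banded diagram $\mathcal{D}$ to $\hat{\mathcal{D}}$ does not induce a sequence taking $\mathcal{D}_{sl}$ to a split diagram. Once the band is attached there is only one component, ``ignoring the banding arc'' is not a well-defined operation on diagrams undergoing Reidemeister moves (the band can stretch, wrap around both pieces, and absorb or shed crossings), and nothing forces the unknotting sequence to ever exhibit an intermediate configuration in which the two sub-pieces are split. This is exactly the kind of intuitively plausible but unproven claim that \Cref{rem:caution} warns against; identifying ``a moment at which cutting the band reveals a split configuration'' is the whole difficulty, not a tracking detail, and neither the bubble-tangle framework nor the Chambers--Liokumovich technique, as used in this paper, applies to a single component with a moving band (spanning disks with boundary are outside the scope of \Cref{def:bubbletangle}, which is about closed separating spheres). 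Second, the premise ``arrange that both $L_1$ and $L_2$ are unknots'' is incompatible with the source of the lower bound you want to import: the crossing-complexity bound for $\mathcal{D}(n,n+1)$ comes from compression bubble tangles of order $\tfrac{2}{3}n$ induced by the tori carrying the $T_{n,n+1}$ components, i.e., from the knottedness (high compression-representativity) of those components. Unknotted components bound disks and give no such high-order obstruction, and in the paper's actual family the band-sum of $U$ with a torus knot is a torus knot, not the unknot. So the family you would need --- a split link of unknots with superconstant crossing-complexity whose banding is the unknot --- is not provided by the paper and would itself require a new obstruction, which is essentially the open problem.
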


Note that a proof of \Cref{conjm}, formulated in terms of recalcitrance, is claimed in \cite{Kauffman_untangling}, but concerns about the correctness of this proof are raised in \cite{Mesmay_hard_unknot}.

\medskip

In this article, as a possible step towards a proof of \Cref{conjm}, we shift our focus to \textbf{split links}.
A link $L$ is said to be split if there exists a sphere disjoint from $L$ separating at least two link components of $L$. If such a sphere exists, there exists a link diagram in which two sublinks of $L$ are disjoint: they are separated by a circle in the plane. Such a diagram is called a \textbf{split diagram}. By capping off the aforementioned circle with one disc above and one disc below the plane of projection, we can verify that conversely a split diagram witnesses a split link.
Determining if a link is split is known as the \textbf{splitting problem}. 

Given a split link $L$ with a diagram $\mathcal{D}_1$, we study $\add(\mathcal{D}_1,\mathcal{D}_2)$ where $\mathcal{D}_2$ is a split diagram of $L$. If the minimum of $\add(\mathcal{D}_1,\mathcal{D}_2)$ over all split diagrams $\mathcal{D}_2$ of $L$, called the \textbf{crossing-complexity} and denoted by $\textbf{\text{CC}}(D_1)$, is positive, we call $\mathcal{D}_1$ a \textbf{hard split link}. 

\subparagraph*{Our results.} We exhibit a family of link diagrams $\mathcal{D}(p,q)$ of split links $L(p,q)$ with two unlinked sublinks. The first sublink $M=M(p,q)$ is made of two linked torus knots $T_{p,q}$, and the second is an unknot $U$ surrounding one of the torus knots (see \Cref{pic_def_diag} for an illustration). 
For any split diagram $\mathcal{D}'(p,q)$ of $L(p,q)$, we prove \Cref{th_unlink_analysed}, implying that $\add(\mathcal{D}(p,q),\mathcal{D'}(p,q)) = \Omega (\min (p,q))$. More precisely, we have the following main theorem.

\begin{theorem}\label{th_unlink_analysed}
For all $n \geq 2$, any sequence of Reidemeister moves transforming diagram $\mathcal{D}(n,n+1)$ of the link $L(n,n+1)$ with $2n^2 +2$ crossings into a split diagram passes through a diagram with at least $2n^2 +\frac{2}{3} n$ crossings. In particular, there exist hard split links of arbitrarily large crossing-complexity. 
\end{theorem}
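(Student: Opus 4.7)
The plan is to track a topological splitting sphere through the Reidemeister sequence using the bubble tangle framework, and then to bound its combinatorial complexity from below in $\mathcal{D}(n,n+1)$. Starting from a split diagram $\mathcal{D}'(n,n+1)$, any splitting sphere for $L(n,n+1)$ gives rise to a trivial bubble tangle: essentially a round curve in the projection plane, disjoint from the link, separating the $M$-side from the $U$-side. Reversing the Reidemeister sequence $\mathcal{D}(n,n+1) = \mathcal{D}^0, \mathcal{D}^1, \dots, \mathcal{D}^r = \mathcal{D}'(n,n+1)$, I would pull this tangle back one move at a time to obtain a bubble tangle $\mathcal{B}_i$ on each $\mathcal{D}^i$. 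The Lunel--de Mesmay machinery is designed so that each Reidemeister move acts locally on a bubble tangle, with a change in complexity bounded in terms of the move type, and so that the resulting $\mathcal{B}_0$ on $\mathcal{D}(n,n+1)$ still encodes a sphere separating $U$ from $M$ up to isotopy in the complement of the link.

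Next, I would prove a topological lower bound on the complexity of any bubble tangle on $\mathcal{D}(n,n+1)$ that separates $U$ from $M$. Because $U$ is drawn so as to encircle one of the two copies of $T_{n,n+1}$ inside $M$, any sphere realising such a separation bounds a disk that must puncture the $T_{n,n+1}$ component at least as many times as its bridge number, namely $\min(n,n+1) = n$. The Chambers--Liokumovich technique, adapted from its Riemannian PL setting to the bubble tangle formalism, converts this topological obstruction into a combinatorial one: the number of intersections of $\mathcal{B}_0$ with the strands of $\mathcal{D}(n,n+1)$ must be $\Omega(n)$. Combining this with the fact that $\mathcal{B}_r$ is trivial, some Reidemeister move in the sequence must absorb a nontrivial amount of bubble complexity, and a careful case analysis of how each move type can reduce bubble weight (with the RI and RII moves contributing the dominant savings) shows that the intermediate diagram at that step must carry at least $\tfrac{2}{3}n$ extra crossings beyond the initial count of $2n^2+2$.

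The main obstacle, I expect, is the second step: upgrading the topological statement that $U$ cannot be unthreaded from $T_{n,n+1}$ without intersections into a sharp, quantitative combinatorial bound on the bubble tangle complexity. This is exactly what the Chambers--Liokumovich argument provides in its original Riemannian context, where a minimal-area homotopy of spheres is converted into an isotopy of controlled complexity. Transplanting this argument to act on link diagrams through bubble tangles, while tracking constants precisely enough to yield the explicit coefficient $\tfrac{2}{3}$ appearing in the theorem, is likely where the technical core of the proof concentrates; a secondary difficulty is verifying that the pull-back of the bubble tangle through all three Reidemeister moves genuinely preserves the separation property of the underlying topological sphere.
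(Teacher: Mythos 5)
Your proposal misuses both of the paper's key tools, and this is not a presentational issue but a structural one. A bubble tangle in the sense of Lunel--de Mesmay is not an object attached to a diagram that can be ``pulled back'' move by move: it is a global choice, for \emph{every} $2$-sphere in $\mathbb{S}^3$ meeting the link in fewer than $k$ points, of a small-side ball subject to the axioms \ref{def_T1}--\ref{def_T4}. There is no ``trivial bubble tangle'' associated to a split diagram, no notion of the ``bubble weight'' of a diagram, and no local action of a Reidemeister move on such a structure, so the whole first step of your plan has no referent in the framework. Chambers--Liokumovich is likewise misplaced: it does not convert a topological obstruction into a combinatorial bound. In the actual proof it plays the opposite, constructive role: during the sequence the projection of $U$ may become non-simple, so the curves $\mathcal{U}_t$ only form a homotopy, and the Chambers--Liokumovich theorem (adapted to the metric given by $\mathcal{M}_t$, which evolves with the moves) is what replaces this homotopy by an isotopy of \emph{simple} curves with the same intersection bound (\Cref{prop_iso_U}), so that each curve lifts to an embedded sphere $S_t$ and one obtains a sweep-out with $|S_t\cap M|$ controlled by $\addcross(\mathcal{D},R)$ (\Cref{lem_inter_sphere}).

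The quantitative core of your argument --- bridge number of $T_{n,n+1}$ plus a per-move case analysis of how complexity can drop --- is essentially the naive argument that \Cref{rem:caution} explains is insufficient: bridge/thin-position style bounds require a monotone sweep, whereas the spheres arising from a Reidemeister sequence can move back and forth, and accounting move by move does not address this global non-monotonicity (nor does it ever produce the specific constant $\tfrac23$, which comes from the order $\tfrac23 n$ of the compression bubble tangles built from the compression-representativity $n$ of the torus knots, not from bridge number). The paper's replacement, absent from your proposal, is the two-tangle agreement argument: take the compression bubble tangles $\T^1,\T^2$ induced by the two tori carrying the components of $M$, and track whether their small sides for $S_t$ coincide. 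At $t=0$ the sphere over $U$ lies between the two tori, so the small sides disagree ($a(0)=0$); at $t=1$ the sphere is disjoint from $M$, so they agree ($a(1)=1$); and \Cref{lem_gen_orient_local,lem_gen_orient_local_bis,prop_gen_orient_consistency} show, using braid-equivalence, stability under inclusion, and a careful treatment of the RII tangencies, that the agreement is constant --- the desired contradiction. Without this (or some other device that tolerates non-monotone sweep-outs and non-simple projections), your concluding step that ``some Reidemeister move must absorb a nontrivial amount of bubble complexity'' is unsupported.
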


\begin{figure}[ht]
\begin{center}
\includegraphics[scale=0.415]{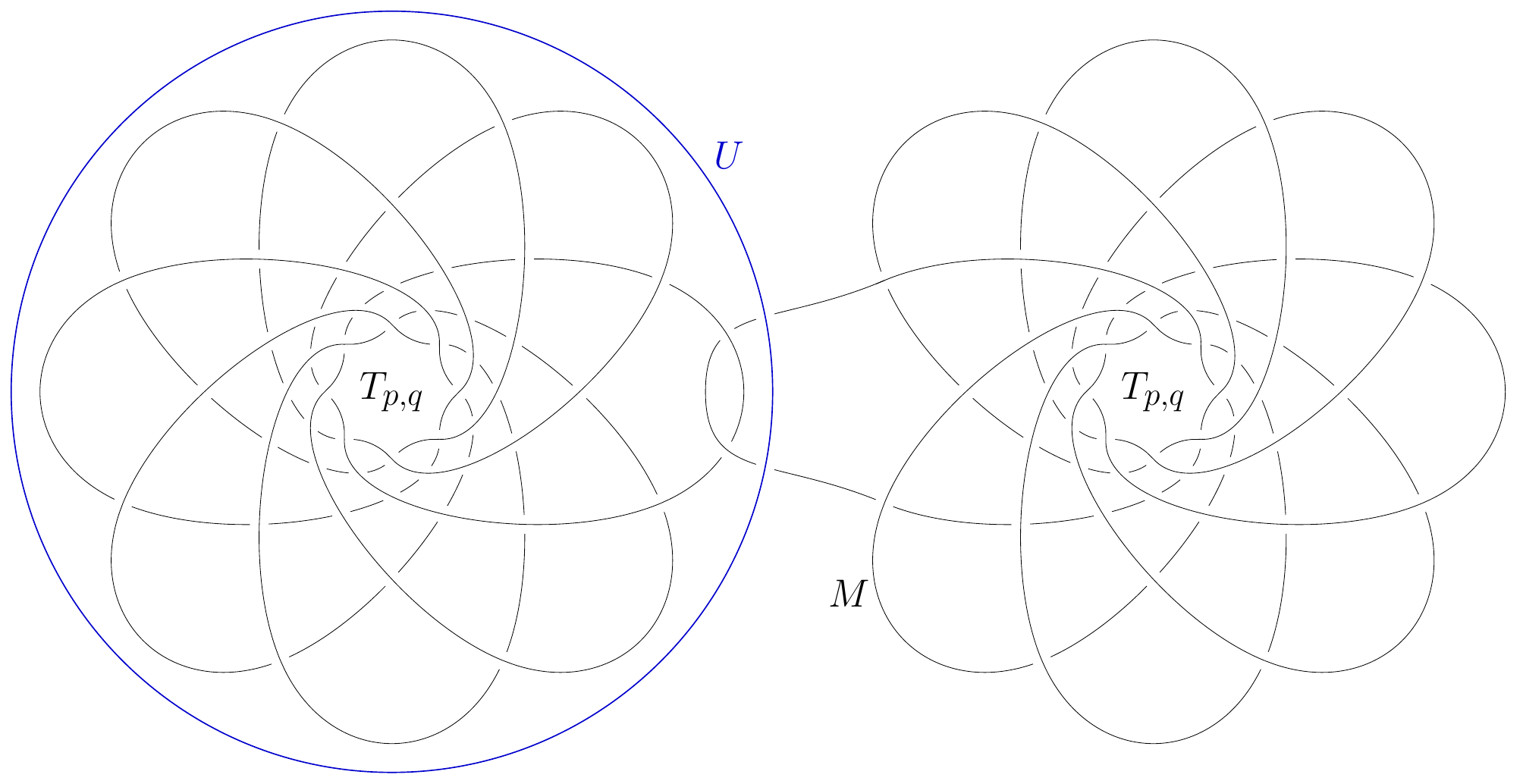}
\caption{The link diagram $\mathcal{D} (p,q)$, $(p,q) = (7,8)$: two linked torus knots $T_{7,8}$ and an unknot $U$.}
\label{pic_def_diag}
\end{center}
\end{figure}

This is to our knowledge the first construction of split links with super-constant crossing-complexity, and we cannot rely on exhaustive search methods to prove \Cref{th_unlink_analysed}. Instead, we develop new techniques to lower-bound the crossing-complexity. It was proved by Dynnikov~\cite[Theorem~2]{Dynnikov_arc} that $\unlink(D)= \mathcal{O}(|D|^2)$, where $|D|$ denotes the number of crossings in a diagram $D$, so there is still a significant gap between this upper bound and the $\Omega(\sqrt{|D|})$ lower bound provided by our theorem.

\begin{remark}
\label{rem:caution}
\Cref{th_unlink_analysed} is one of the many knot-theoretical statements that seems intuitively clear but is surprisingly delicate to prove: inspecting \Cref{pic_def_diag}, ``clearly'' the only way to split $U$ from the two components of $M$ is through an overlay of $U$ on top of one of these components, yielding the claimed increase in the number of crossings. In fact, the following idea to prove \Cref{th_unlink_analysed} may seem straightforward: 
if there is a sequence of Reidemeister moves splitting $U$ from $M$ without adding too many crossings, then we can find a continuous family of planes $(P_t)_{t\in \{0,1\}}$ in $\mathbb{R}^3$ sweeping one of the torus knots in $M$ such that none of the planes $P_t$ intersects $M$ in too many points. Known results on \emph{bridge position}~\cite{schubert1954numerische,schultens2007bridge} and \emph{thin position}~\cite{gabai1987foliations,scharlemann2005thin} of torus knots then imply that such a family cannot exist\footnote{This argument is the basis of the proof in~\cite{Mesmay_treewidth} showing that torus knot diagrams have high tree-width.}. 

However, this proof idea has two issues. First, during a sequence of Reidemeister moves, the unknot $U$ might come to intersect itself, making it difficult to define a continuous family of planes that mimics its intersections with $M$.  Second, the Reidemeister moves might lead $U$ to move back and forth, leading to a non-monotone behaviour of the planes $P_t$, which is not allowed in the aforementioned positions commonly used in knot theory. This leads us to rely on more advanced tools to prove \Cref{th_unlink_analysed}. 
\end{remark}

\subparagraph*{Overview of the proof.} The proof of \Cref{th_unlink_analysed} works by contradiction. That is, we start by assuming that there exists a sequence of Reidemeister moves $R$ transforming $\mathcal{D}(n,n+1)$ into a split diagram and such that $\addcross(\mathcal{D}(n,n+1),R)$ remains small. This implies that, throughout performing $R$, the number of crossings involving $U$ and $M$ always remains small. 

The main tool that we rely on is the framework of bubble tangles introduced by the first and second authors in~\cite{Lunel_spherewidth_out}. More precisely, we use the evolution of $U$ throughout $R$ to define a collection of $2$-dimensional spheres that continuously sweep $\Sp^3$ (a \textit{sweep-out}) and that -- according to our assumptions -- all have a small number of intersections with $M$. This sweep-out resembles the sphere decompositions of~\cite{Lunel_spherewidth_out}, but presents two notable differences. On the one hand it is simpler: it is linear and features no double bubbles. On the other hand, it is not monotone: a sphere involved in this sweep-out may go back-and-forth, while this behaviour is not allowed in the sphere decompositions from \cite{Lunel_spherewidth_out}. Despite this last difference, the bubble tangles, which are obstructions to thin sphere decompositions developed in \cite{Lunel_spherewidth_out}, are versatile enough to show that the existence of this sweep-out leads to a contradiction.

As mentioned in \Cref{rem:caution}, building the sweep-out is not straightforward. Intuitively, one would like to lift each unknot $U$ to a sphere by capping it off above and below the diagram. However, the projection of the unknot $U$ may intersect itself during the sequence of Reidemeister moves, which complicates the process.
We alleviate this problem using results and methods from Chambers and Liokumovich~\cite{Chambers_homo-isotopy} to transform homotopies of curves on a Riemannian surface into isotopies of similar length. The connection is the following: we think of the projection of the link $M$ as a discrete metric for curves in the projection plane. In this discrete model, the \textbf{length} of a curve is given by its number of intersections with $M$, similarly to the cross-metric model commonly used in computational topology of surfaces (see for example~\cite{de2010tightening}). Our assumptions imply that there exists a homotopy of the projection of $U$ where intermediate curves all have small length. The techniques of \cite{Chambers_homo-isotopy} show that this implies that there also exists an isotopy of the projection of $U$ with the same bounds on lengths. Since such an isotopy must consist of simple curves, we can then lift it into a sweep-out of $\Sp^3$ with $2$-spheres, which all have a controlled number of intersections with $M$. However, a subtlety is that we cannot use the results of \cite{Chambers_homo-isotopy} out of the box. Indeed, the discrete metric defined by the link $M$ is not fixed but evolves with the Reidemeister moves in $R$. In \Cref{sec_homo_iso} we explain why the proof techniques in \cite{Chambers_homo-isotopy} can be adapted to deal with this issue.

The tools of the first and second authors~\cite{Lunel_spherewidth_out} are then used to find a contradiction between the existence of the sweep-out and the topological properties of our link $M$. Namely, $L(n,n+1)$ consists of two torus knots $T_{n,n+1}$, which are embedded on tori in a specific way (they both have high \emph{compression-representativity}). Hence, they can be used to define two \textbf{bubble tangles} using \cite[Theorem~1.2]{Lunel_spherewidth_out} (In a nutshell, a bubble tangle is a way of choosing a ``small side'' for each sphere of the sweep-out, where the intuition is that the small side should be easy to sweep, see \Cref{sec_prelim} for a formal definition). We then prove that, since in the initial diagram $U$ lies in-between the two tori, different small sides are chosen for the corresponding sphere by each of the two bubble tangles. But the existence of the sweep-out with a small number of intersections with $M$ forces the small sides of both bubble tangles to agree, leading to a contradiction.

\subparagraph*{Related work.}
The splitting problem has been studied several times as a useful and easier problem for understanding the unknot recognition problem~\cite{Dynnikov_arc, Lackenby_Poly_bound}. In 1961, Haken used normal surface theory to show that it is decidable \cite{haken_normal_surface}. Later, it was determined to be in \textbf{NP}~\cite{Hass_trivial_NP} and also in co-\textbf{NP}~\cite[Theorem~1.6]{Lackenby_co-NP}. 

Several decision problems related to the splitting problem have been studied as well. For instance, the problem of deciding whether changing at most $k$ crossings can transform a link diagram into the diagram of a split link is known to be \textbf{NP}-hard \cite{Koenig_hardness}, and Lackenby provided an algorithm to detect links which can be split using exactly one crossing change~\cite{lackenby2021links}. 

Another natural question is to ask for the minimal number of Reidemeister moves needed to split a diagram. An exponential bound for this number was first provided by~\cite{Hayashi_upper_unlink}, and this bound was later greatly improved by Lackenby in~\cite{Lackenby_Poly_bound}, where he provided a polynomial bound using a combination of normal surface theory~\cite{Hass_trivial_NP} and Dynnikov's work on grid diagrams~\cite{Dynnikov_arc}. There is a quadratic lower bound on the number of moves needed to untangle a specific unknot diagram in~\cite{hass2010unknot}, and it was shown in~\cite{unbearable_SoCG, unbearable} that finding the shortest sequence of Reidemeister moves to untangle an unknot is \textbf{NP}-hard.

\subparagraph*{Organisation of this paper.}
After going through our setup in \Cref{sec_prelim}, we explain how to use the results of \cite{Chambers_homo-isotopy} in \Cref{sec_homo_iso} while providing a detailed proof in \Cref{sec_chambers_appendix}. This step is crucial for our definition of sweep-outs. Then, we exploit obstructions from \cite{Lunel_spherewidth_out} to prove \Cref{th_unlink_analysed} in \Cref{sec_tangle}.

\section{Setup and definitions}\label{sec_prelim}

\subparagraph*{Knots and links.} Many concepts from this paper come from knot theory: while we strive to be as self-contained as possible, we refer to standard textbooks~\cite{burde2002knots,Rolfsen_Knots} for an introduction to this topic and to Hatcher~\cite{Hatcher_Algebraic_Topology} for background on algebraic topology. Throughout this article, we work in the piecewise-linear (PL) category, which means that all the objects and functions that we consider are piecewise-linear with respect to a fixed polyhedral decomposition of the ambient space (generally $\Sp^3$). Two embeddings $i_1$ and $i_2$ in a topological space $S$ are \textbf{(ambient) isotopic} if there exists a continuous family of homeomorphisms $h: S \times[0,1] \rightarrow S$ such that $h (i_1,0) =i_2$ and $h(\cdot,1)$ is the identity. A \textbf{knot}, resp. a \textbf{link}, is an embedding of the circle $\Sp^1$, resp. of a disjoint union of circles, into $\Sp^3$. In the following, we introduce the main definitions for knots for simplicity, but they apply identically to links. Two knots $K_1$ and $K_2$ are considered to be equivalent if they are isotopic. Since every knot misses at least one point of $\Sp^3$, via stereographic projection we can equivalently consider knots to be embedded in $\mathbb{R}^3$, and we freely switch between these two perspectives. The \textbf{unknot} is, up to equivalence, the unique embedding of $S^1$ in $\mathbb{R}^3$ with image a triangle. A \textbf{torus knot} $T_{p,q}$ is a knot embedded on a surface of an unknotted torus $\To$ in $\mathbb{S}^3$, for example a standard torus of revolution. It winds $p$ times around the revolution axis, and $q$ times around the core of the torus. We refer to Figure~\ref{pic_def_diag} for an illustration of two torus knots and an unknot.

A \textbf{knot diagram} $\mathcal{D}$ is a plane four-regular graph (i.e., a planar graph with an explicit embedding in the plane), where each vertex, i.e., crossing of the knot, is decorated to indicate which strands are above and below. From such a diagram, one can easily obtain the data of a knot $K \hookrightarrow \mathbb{R}^3$ and a linear \textbf{projection} map $p: \mathbb{R}^3 \rightarrow P \simeq \mathbb{R}^2$ so that $p(K)=\mathcal{D}$ (respecting the decorations), where $P$ is a plane of $\mathbb{R}^3$ called the \textbf{projection plane}. The \textbf{crossing number} of a knot is the minimal number of crossings among all of its diagrams. The Reidemeister theorem~\cite{Reidemeister_reidemeister_th} shows that two diagrams represent equivalent knots if and only if they can be connected by a sequence of planar isotopies and local moves called \textbf{Reidemeister moves}, which are pictured in \Cref{pic_Reide_move}. 

Finally, recall that a \textbf{homotopy} between two simple closed curves of a surface $\Sigma$, $\gamma_0:\mathbb{S}^1 \hookrightarrow \Sigma$ and $\gamma_1:\mathbb{S}^1 \hookrightarrow \Sigma$ is a continuous map $\gamma:\mathbb{S}^1 \times [0,1] \rightarrow \Sigma$ such that $\gamma (\mathbb{S}^1,0)=\gamma_0$ and $\gamma (\mathbb{S}^1,1)=\gamma_1$. In particular, closed curves are allowed to self-intersect in a homotopy, while this is disallowed in an isotopy. Throughout this paper, to distinguish between links and their projections, we use calligraphic letters to designate diagrams, and capital letters to designate links so that a diagram of the link $M$ is denoted by $\mathcal{M}$.

\begin{remark}
We work with the standard setting of knot diagrams in $\mathbb{R}^2$, but it is also possible to work with diagrams in $\Sp^2$, which therefore allow for more Reidemeister moves (this is the perspective taken in~\cite{Mesmay_hard_unknot}). Our results also hold in that setting, since a knot diagram in $\Sp^2$ lifts to a knot $K \hookrightarrow \Sp^2 \times [-\varepsilon,\varepsilon] \subseteq \mathbb{R}^3$, with the natural projection map $p:\Sp^2 \times [-\varepsilon,\varepsilon], (s,t) \mapsto (s,0)$. The definitions of the spheres obtained from the diagrams in \Cref{sec_tangle} can be directly adapted to this setting, and the rest of the proof is identical.
\end{remark}

\subparagraph*{Our link diagrams.} Throughout this article, we write $L(p,q)$ for the split link consisting of two linked torus knots $T_{p,q}$, denoted by $M(p,q)$ and shown in \Cref{pic_def_diag}, and an unlinked unknot component $U$. We consider two diagrams of $L(p,q)$. The first, denoted by $\mathcal{D}(p,q)$, is shown in \Cref{pic_def_diag}, and the second diagram $\mathcal{D}'(p,q)$ is any split link diagram of $L(p,q)$.

Let $R$ be a sequence of Reidemeister moves turning $\mathcal{D}(p,q)$ into $\mathcal{D}'(p,q)$, such that we have $\addcross(\mathcal{D}(p,q),R) \leq k$ for $k \geq 0$. Our goal is to prove that $k$ cannot be smaller than a function depending only on $p$ and $q$. Since the values of $p$ and $q$ are mostly fixed and have little influence on our arguments, we mostly omit the parameters $(p,q)$ from $L$, $M$, $\mathcal{D}$, and $\mathcal{D}'$.

The following lemma directly follows from known results on torus knots.

\begin{lemma}\label{lem_min_cross_diag}
Let $n\geq 2$, and let $\mathcal{D}'$ be a link diagram equivalent to $\mathcal{D} (n,n+1)$ by Reidemeister moves. Then $\cross(\mathcal{D}') \geq 2n^2$, that is, $\mathcal{D}'$ has at least $2 n^2$ crossings.
\end{lemma}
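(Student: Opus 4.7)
The plan is to decompose the crossings of $\mathcal{D}'$ by which components they involve and bound each piece using invariants preserved by Reidemeister moves. Write $L(n,n+1) = T_1 \sqcup T_2 \sqcup U$, where $T_1, T_2$ are the two torus knot components (each isotopic to $T_{n,n+1}$) and $U$ is the unknot. Every crossing of $\mathcal{D}'$ is either a self-crossing of one component or involves exactly two distinct components, so
\[
\cross(\mathcal{D}') \;\geq\; s_1 + s_2 + b_{12},
\]
where $s_i$ counts self-crossings of $T_i$ and $b_{12}$ counts crossings between $T_1$ and $T_2$. It therefore suffices to prove $s_i \geq n^2 - 1$ for $i = 1,2$ and $b_{12} \geq 2$.

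For the self-crossing bound, erasing every strand of $\mathcal{D}'$ not belonging to $T_i$ produces a knot diagram representing $T_i \simeq T_{n,n+1}$ with exactly $s_i$ crossings. By Murasugi's classical theorem on the crossing number of torus knots, $\cross(T_{p,q}) = \min\{p(q-1), q(p-1)\}$ for coprime $p, q \geq 2$. Specialising to $(p,q) = (n, n+1)$ and noting that $n < n+1$ gives $\cross(T_{n,n+1}) = (n+1)(n-1) = n^2 - 1$, whence $s_i \geq n^2 - 1$.

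For the inter-component bound, the signed sum of crossings between two oriented components of any link diagram equals twice their linking number, so $b_{12} \geq 2\,|\mathrm{lk}(T_1, T_2)|$. Since the linking number is a link invariant, it can be computed on the reference diagram $\mathcal{D}(n, n+1)$: direct inspection of \Cref{pic_def_diag} shows that there are exactly two crossings between $T_1$ and $T_2$ and that they have the same sign, so $|\mathrm{lk}(T_1, T_2)| = 1$, consistent with the two torus knots being linked as asserted. Hence $b_{12} \geq 2$.

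Summing the three bounds yields $\cross(\mathcal{D}') \geq 2(n^2 - 1) + 2 = 2n^2$, as required. The only step that depends on the specific geometry of the construction is the short inspection yielding $\mathrm{lk}(T_1, T_2) \neq 0$; the rest of the argument is a routine application of Murasugi's formula together with the general linking-number inequality, so I expect no significant conceptual obstacle.
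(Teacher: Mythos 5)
Your proof is correct and takes essentially the same route as the paper: Murasugi's crossing-number bound $\cross(T_{n,n+1}) = (n+1)(n-1) = n^2-1$ applied to the subdiagram of each torus-knot component, plus at least two crossings between the two linked components. The only (minor) difference is that you justify the inter-component bound via the linking number $\mathrm{lk}(T_1,T_2)=\pm 1$ read off from $\mathcal{D}(n,n+1)$, whereas the paper simply invokes that the two linked torus knots must share at least two crossings in any diagram.
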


\begin{proof}
Let $\mathcal{D}'$ be a diagram of $L=L(n,n+1)$. First note that, since $n\geq 2$, it follows from a theorem of Murasugi~\cite[Proposition 7.5]{Murasugi_crossings_torus} that each of the torus knot components $T_{n,n+1}$ of $L$ has at least $(n+1) \times (n-1) = n^2 -1$ crossings. Since the two torus knots are linked, they share at least $2$ crossings in each diagram of $L$, and it follows that $\cross(\mathcal{D}) \geq 2n^2$.
\end{proof}

\subparagraph*{From a sequence of Reidemeister moves to continuous operations.} As detailed above, we work with a sequence of Reidemeister moves $R$ turning the link diagram $\mathcal{D}$ of $L$ into the split diagram $\mathcal{D}'$. From this sequence of Reidemeister moves, we can obtain an ambient isotopy $\Phi_R : \R^3 \times [0,1] \to \R^3$ and a projection $p: \R^3 \rightarrow \R^2$ so that the diagrams $p(\Phi_R(L,t))$ follow the evolution of $\mathcal{D}$ under the moves $R$, and in particular the combinatorial types of the diagrams $p(\Phi_R(L,t))$ only change for a finite number of values of $t$, one for each Reidemeister move. The projection $p$ is regular except at the \textbf{critical times} of $R$, which are times where the projection $p \circ \Phi_R(L,t)$ displays a tangency or a triple point, see \Cref{pic_def_critic} for an illustration. 
For any non-critical time $t$, we write $L_t = \Phi_R(L,t)$ and we denote the diagram defined by $p(L_t) = p(\Phi_R(L,t))$ by $\mathcal{D}_t$. Naturally, we have $\mathcal{D}_0 = \mathcal{D}$ and $\mathcal{D}_1 = \mathcal{D}'$.

\begin{figure}[ht]
\begin{center}
\begin{tikzpicture}
\begin{scope}[very thick]

\begin{scope}[xshift=2.25cm]

\node at  ($(1.25,1)!0.5!(2.25,1)$) {RII};

\draw (2.5,0) .. controls +(90:0.75) and \control{(3,1)}{(-90:0.3)} .. controls +(90:0.3) and \control{(2.5,2)}{(-90:0.75)}; 
\draw (3.5,0) .. controls +(90:0.75) and \control{(3,1)}{(-90:0.3)} .. controls +(90:0.3) and \control{(3.5,2)}{(-90:0.75)}; 
\end{scope}

\begin{scope}[xshift=4.75cm]

\node at ($(2.25,1)!0.5!(3.25,1)$) {RIII};

\draw (3.5,0) -- ++(2,2);
\draw (5.5,0) -- ++(-2,2);

\draw (3.5,1) -- ++(2,0);\
\end{scope}
\end{scope}
\end{tikzpicture}
\caption{Critical times corresponding to Reidemeister moves RII (left) and RIII (right).}
\label{pic_def_critic}
\end{center}
\end{figure}
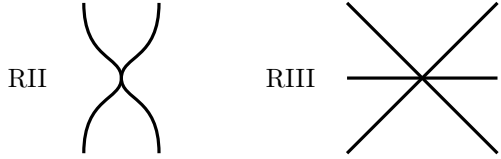

Note that the definition of $\addcross(\mathcal{D},R)$ naturally coincides with $\sup_{t\in [0,1]} \{\cross(\mathcal{D}_t) - \cross(\mathcal{D}_0)\} $. Indeed, the diagram $\mathcal{D}_t$ at critical times has fewer intersections than one of $\mathcal{D}_{t+ \epsilon }$ or $\mathcal{D}_{t- \epsilon }$ for $\epsilon$ small enough. Furthermore, $\cross(\mathcal{D}_t)$ is constant for all $t$ between two critical times.

In \Cref{sec_homo_iso} we consider the movements of $U$ and of $M$ under the Reidemeister moves separately. 
We use $\mathcal{M}_t = p(\Phi_R(M,t))$ as a shorthand for the diagram of the sublink $M\subset L$ at time $t$. For $\mathcal{U}$ we consider the homotopy $\phi_U: \mathbb{S}^1 \times [0,1] \rightarrow \R^2$ in the plane induced by the projection $p(\Phi_R(U,t))$. We denote the corresponding curves by $\mathcal{U}_t=\phi_U(\Sp^1,t)$ and emphasise that we consider these as immersions of closed curves in the plane. That is, we forget which strand is over which at each self-crossing of $\mathcal{U}_t$. See \Cref{pic_commut_op_def} for a summary of this setup.

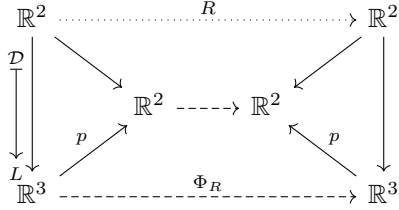
\begin{figure}[ht]
\begin{center}
\begin{tikzcd}
\R^2  \arrow[rrr, "R", dotted] \arrow[dr] \tikzmark{s}{\mathcal{D}}{-12pt}{-14pt}{} \tikzmark{s}{L}{-12pt}{-58pt} \tikz[overlay,remember picture,baseline] \draw [|->] (-0.42,-0.56) -- ++(0,-1.2); & & & \R^2 \arrow[dl, swap] \\
& \R^2 \arrow[r, dashed] & \R^2 & \\
\R^3 \arrow[rrr, "\Phi_R", dashed] \arrow[uu, <-] \arrow[ur, "p"]   & & & \R^3 \arrow[uu, <-, swap ] \arrow[ul, "p", swap]
\end{tikzcd}
\caption{Definition of our homotopies from the sequence of Reidemeister moves $R$.}
\label{pic_commut_op_def}
\end{center}
\end{figure}

\section{From homotopies to isotopies}\label{sec_homo_iso}

We work with the definitions of \Cref{sec_prelim}. We start with a sequence of Reidemeister moves $R$ turning $\mathcal{D}$ into $\mathcal{D}'$, and consider the induced homotopy $\phi_U$ of the link component $\mathcal{U}$ in the plane of projection. The goal of this section is to use results from~\cite{Chambers_homo-isotopy} in order to locally alter the images $\mathcal{U}_t=\phi_U(\Sp^1,t)$ into simple closed curves, and hence to obtain an isotopy taking $\mathcal{U}_0$ to $\mathcal{U}_1$. 
After these modifications, the altered simple closed curves representing $U$ still sweep over the remainder $\mathcal{M}_t$ of the diagrams $\mathcal{D}_t$, but this sweep no longer necessarily corresponds to a sequence of Reidemeister moves on the original link $L$.

Nevertheless, in the next section we use the isotopy from $\mathcal{U}_0$ to $\mathcal{U}_1$ to define a family of $2$-spheres in $\Sp^3$ sweeping through $M$. This setup then allows us to use bubble tangles to obstruct small values of $\addcross(\mathcal{D},R)$ in the initial sequence of Reidemeister moves $R$.

\medskip

The key points of this process are given by the following statement.

\begin{proposition}\label{prop_iso_U}
Let $R$ be a sequence of Reidemeister moves turning $\mathcal{D}$ into the split diagram $\mathcal{D}'$ such that for all $t \in [0,1]$,  $\cross(\mathcal{D}_t)\leq m$ for some integer $m \geq 0$. Then there exists an ambient isotopy $\Phi' : \Sp^3 \times [0,1] \to \Sp^3$ and an isotopy $h:\Sp^1 \times [0,1] \rightarrow \R^2$ such that

\begin{enumerate}
    \item $h(\Sp^1,0)=\mathcal{U}_0$ and $h(\Sp^1,1)=\mathcal{U}_1$; and
    \item for all $t \in [0,1]$, the total number of crossings in the overlay of $p(\Phi'(M,t))$ and $h(\Sp^1,t)$ in $\R^2$ is at most $m$.
\end{enumerate}
\end{proposition}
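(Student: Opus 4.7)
The plan is to view the projection $\mathcal{M}_t$ as defining a time-varying cross-metric on the projection plane, in the spirit of \cite{de2010tightening}, where the length of a curve equals its number of transverse intersections with $\mathcal{M}_t$. Under this interpretation, the hypothesis $\cross(\mathcal{D}_t)\leq m$ gives in particular that $|\mathcal{U}_t \cap \mathcal{M}_t| + \cross(\mathcal{M}_t) \leq m$ for every $t$, so the homotopy $\phi_U$ has length bounded by $m - \cross(\mathcal{M}_t)$ throughout. The goal is then to invoke the Chambers--Liokumovich theorem of \cite{Chambers_homo-isotopy}, which converts a length-bounded homotopy of a closed curve on a Riemannian surface between simple endpoints into a length-bounded isotopy through simple closed curves, up to an arbitrarily small error. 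Applying it in our discrete setting (via a standard thickening of the cross-metric to a piecewise-flat Riemannian metric whose length agrees with intersection count) replaces the self-intersecting intermediate curves $\mathcal{U}_t$ by simple curves $h(\Sp^1,t)$ while preserving the crossing bound with $\mathcal{M}_t$. The ambient isotopy $\Phi'$ of $\Sp^3$ is then obtained from $\Phi_R$ by extending its restriction to a neighbourhood of $M$ arbitrarily, so that $p(\Phi'(M,t)) = \mathcal{M}_t$, and conclusion 2 follows from the length bound on $h$ together with the self-crossings of $\mathcal{M}_t$ already accounted for in $m$.

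The main obstacle is that Chambers--Liokumovich is stated for a fixed ambient metric, whereas our cross-metric evolves with each Reidemeister move of $R$. My plan to adapt it is to partition $[0,1]$ at the finitely many critical times of $R$; on each open sub-interval between consecutive critical times, $\mathcal{M}_t$ is combinatorially constant and the theorem applies verbatim to the restricted homotopy. The sub-interval isotopies then need to be reconciled across each critical time, which splits into three cases depending on which components the move touches. A move affecting only $M$ leaves $\mathcal{U}$ unchanged, so $\Phi_R$ itself, applied to $M$, provides the connecting isotopy with no new intersections of $h$ with $\mathcal{M}_t$. A move affecting only $U$ takes place in a disk disjoint from $\mathcal{M}$, so the simple representative $h$ can be kept constant across the move (the self-crossing of $\mathcal{U}$ in the original homotopy has already been surgered away). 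A mixed Reidemeister~II or~III between $U$ and $M$ is supported in a small disk where the intersection count changes by at most two, and $h$ can be pushed across by a local isotopy in the same disk, bounded by the instantaneous bound of the move.

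I expect the most delicate step to be verifying that the internal Chambers--Liokumovich construction, which proceeds by surgeries along innermost disks bounded by portions of the curve, is robust enough that the sub-interval isotopies can actually be patched together at the critical times without ever producing an intermediate simple curve whose number of intersections with $\mathcal{M}_t$ exceeds the bound inherited from $\phi_U$; in particular, one must verify that the innermost-disk surgeries can be chosen in a way that is compatible with the local modifications prescribed at the critical times. A secondary subtlety is that Chambers--Liokumovich requires both endpoints of the homotopy to be simple: $\mathcal{U}_0$ is simple by construction of $\mathcal{D}$ (it is the outer circle in \Cref{pic_def_diag}), while we may need to preface the argument by performing Reidemeister moves on $U$ alone inside the split diagram $\mathcal{D}'$ in order to make $\mathcal{U}_1$ simple without violating the bound $m$. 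Together, these steps produce the isotopy $h$ and the ambient isotopy $\Phi'$ satisfying conditions 1 and 2.
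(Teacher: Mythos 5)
Your setup matches the paper's: interpret $\mathcal{M}_t$ as a (time-varying) cross-metric, so that the hypothesis bounds the ``length'' of every curve $\mathcal{U}_t$, and invoke the Chambers--Liokumovich theorem to replace the homotopy $\phi_U$ by an isotopy through resolutions of the $\mathcal{U}_t$, with $\Phi'$ obtained from $\Phi_R$. The divergence, and the gap, is in how you handle the evolving metric. Applying Chambers--Liokumovich ``verbatim'' on each sub-interval between critical times of $R$ is essentially vacuous: between critical times nothing changes combinatorially (neither $\mathcal{U}_t$ nor $\mathcal{M}_t$), so the restricted homotopy is already an isotopy of each fixed resolution, and moreover the endpoints of these restricted homotopies are generally non-simple, so the theorem does not even apply to them as stated. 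All of the content of the theorem lies precisely in crossing the critical events, which your plan relegates to an ad hoc ``patching'' step. That step is where the known difficulty sits: keeping the simple representative $h$ constant across a move affecting only $U$ (or pushing it across a mixed move by a local isotopy) is exactly the naive strategy that Chambers and Liokumovich show fails in general --- when a crossing-decreasing move removes self-intersections that your chosen resolution smoothed the ``wrong'' way, there is no continuous local transition to a resolution of the post-move curve (their Example~2 and Figures~3--4; compare also the paper's Figure~\ref{pic_reidemeister_issues}(B)). Resolving this requires the global combinatorial core of their proof, the auxiliary graph of resolutions together with the handshaking-lemma path argument, which selects \emph{which} resolution to follow on each interval and where to switch; this choice cannot be made independently on each sub-interval and then reconciled locally. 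You flag this as ``the most delicate step to verify'' but supply no argument for it, so the central step of the proposition is missing.

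For contrast, the paper does not fragment the homotopy: it re-runs the Chambers--Liokumovich proof with the Reidemeister moves involving $\mathcal{U}_t$ playing the role of the critical events, observes that moves involving only $\mathcal{M}_t$ (or only the relative position of $\mathcal{U}_t$ and $\mathcal{M}_t$) can be applied to \emph{any} resolution while preserving the intersection count with $\mathcal{M}_t$, and then reuses the graph-of-resolutions/handshake argument globally to connect the resulting isotopies. Your secondary worry about $\mathcal{U}_1$ is comparatively minor: in the split diagram the curve $\mathcal{U}_1$ can be taken simple (and in any case only its disjointness from $\mathcal{M}_1$ is used downstream), so no preparatory moves are needed; the real issue to repair is the patching argument above.
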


We emphasise that in the second item of \Cref{prop_iso_U}, we only consider the sublink $M$ in $\Phi'(M,t)$, and not the entire link. That is, the proposition provides an ambient isotopy for $M$ in $\R^3$ and an isotopy for $p(U)$ in $\R^2$, while preserving a bound on the total number of intersections of $p(\Phi'(M,t))$ and $h(\Sp^1,t)$ in the plane of projection. This proposition follows from the techniques of Chambers and Liokumovich developed in Chapter $2$ of \cite{Chambers_homo-isotopy}, the proof is detailed in \Cref{sec_chambers_appendix} for completeness. We first state one of their main results. For $\alpha$ a curve on a surface, we denote by $-\alpha$ the same curve with the opposite orientation.

\begin{definition}[Chambers, Liokumovich {\cite[Definition 1.3]{Chambers_homo-isotopy}}]
\label{def:Chambers_homo-isotopy}
Two curves $\alpha$ and $\beta$ on a Riemannian $2$-manifold $M$ are \textbf{$\epsilon$-image equivalent}, $\alpha \sim_\epsilon \beta$, if (i) there exists a finite collection of disjoint intervals $\bigsqcup_{i=1}^{n} I_i \subset \Sp^1$ such that $\|\alpha (\Sp^1 \smallsetminus \bigsqcup I_i)\| + \| \beta (\Sp^1 \smallsetminus \bigsqcup I_i) \| < \epsilon$, and (ii) there exists a permutation $\sigma$ of $\{1, \ldots, n\}$ and a map $f : \{1, \ldots, n\} \rightarrow \{0,1\}$, such that $\alpha |_{I_i} = {(-1)}^{f(i)} \beta|_{I_{\sigma_{(i)}}}$ for all $i$. Here, $\| \alpha \|$ denotes the length of the curve $\alpha$.
\end{definition}

The main result we use for the proof of \Cref{prop_iso_U} is given by the following statement.

\begin{theorem}[Chambers, Liokumovich {\cite[Theorem 1.1']{Chambers_homo-isotopy}}]\label{Chambers_homo-isotopy}
Suppose that $\gamma$ is a smooth homotopy of closed curves on a Riemannian $2$-manifold $M$ and that $\gamma_0$ is a simple closed curve. Then, for every $\epsilon > 0$, there exists an isotopy $\overline{\gamma}$ such that $\overline{\gamma_0} = \gamma_0$ and $\overline{\gamma_1}$ is $\epsilon$-image equivalent to a small perturbation of $\gamma_1$. Additionally, for every $t$, there exists a $t'$ such that $\overline{\gamma_t}$ is $\epsilon$-image equivalent to a small perturbation of $\gamma_{t'}$. If $\gamma_1$ is simple, or is a point, then this homotopy also ends at $\gamma_1$, up to a change in orientation.
\end{theorem}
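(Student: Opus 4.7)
The plan is to first reduce to a generic homotopy: by a small transversality perturbation I would arrange that $\gamma_t$ is an immersion with transverse self-intersections for all but finitely many critical times $0 < t_1 < \ldots < t_k < 1$, and at each $t_i$ the only singular event is either a self-tangency (the birth or death of a pair of transverse crossings) or a triple point. Between consecutive critical times the combinatorial type of $\gamma_t$ -- the number of self-intersections together with their cyclic order around $\Sp^1$ -- stays constant, so the homotopy locally factors as an isotopy of immersed curves with a fixed combinatorial presentation.

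The central construction is to track a simple curve $\overline{\gamma}_t$ obtained from $\gamma_t$ by a consistent \emph{smoothing pattern}: at each transverse self-intersection of $\gamma_t$ I pick one of the two local non-crossing resolutions, and iterating these resolutions yields a simple closed curve. Between critical times the smoothing pattern is held fixed, and $\overline{\gamma}_t$ tracks $\gamma_t$ continuously, agreeing with it outside arbitrarily small neighbourhoods of the self-intersections. The total length of the arcs where $\overline{\gamma}_t$ and $\gamma_t$ disagree can be made less than $\epsilon/2$ by shrinking these neighbourhoods, which supplies the $\epsilon$-image equivalence required by \Cref{def:Chambers_homo-isotopy} at regular times. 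At a triple-point event the three incident smoothings admit a canonical passage through the event (analogous to tracing an R-III move by a simple isotopy), so the smoothing pattern extends across $t_i$ without destroying simplicity and with only a local length perturbation.

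The hardest step, and the technical core I expect the argument to turn on, is the self-tangency event, where two new transverse crossings are born. There are four combinatorial choices of smoothings on the new crossing pair, and a local analysis of the tangential configuration should show that at least one extension produces a simple $\overline{\gamma}_t$, typically at the cost of excising a short bigon arc and reconnecting; the length of the excised piece is bounded by the local diameter of the tangency, which can be made arbitrarily small by refining the generic perturbation of $\gamma$. Since there are only finitely many critical events, summing gives a global discarded length bounded by $\epsilon/2$, and combining with the regular-time bound yields $\epsilon$-image equivalence with a small perturbation of $\gamma_{t'}$ for each $t$.

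The endpoint conditions then fall out by construction: $\gamma_0$ simple means there are no crossings to smooth so $\overline{\gamma}_0 = \gamma_0$, and if $\gamma_1$ is simple or a point the final smoothing pattern is trivial so $\overline{\gamma}_1 = \gamma_1$ up to a possible orientation reversal inherited from the parity of triple-point events encountered along the way. The main obstacle is precisely the consistent propagation of smoothings across self-tangency events while keeping $\overline{\gamma}_t$ simple and controlling the cumulative excised length; addressing it requires a careful combinatorial-geometric case analysis of each local event, together with a quantitative bound on bigon perimeters in terms of the chosen perturbation scale, which is the delicate part that the full Chambers--Liokumovich argument carries out in detail.
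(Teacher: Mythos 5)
There is a genuine gap, and it is precisely the one that the source paper flags as the heart of the matter. Your plan is to fix a ``consistent smoothing pattern'' and propagate it across each critical event by a local case analysis, arguing that at a self-tangency ``at least one extension produces a simple $\overline{\gamma}_t$.'' This is the naive strategy, and it fails: the obstruction is not whether a simple resolution exists after the event (resolutions are simple by definition; the subtlety is rather that only some smoothing patterns yield a \emph{single} closed component), but whether the resolution you have been tracking is connected by a local isotopy to \emph{any} admissible resolution on the other side of the event. Chambers and Liokumovich exhibit an explicit example (their Example~2, Figures~3 and~4) where no locally consistent choice exists and a greedy propagation becomes discontinuous. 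The actual proof replaces your local case analysis by a global combinatorial argument: one builds an auxiliary graph whose vertices are the admissible resolutions on each interval between critical events and whose edges record which pairs are joined by local isotopies across an event, and one extracts a path from $\gamma_0$ to $\gamma_1$ via the handshaking lemma. This path may move backwards and forwards in the time parameter, which is exactly why the theorem only guarantees that $\overline{\gamma_t}$ is $\epsilon$-image equivalent to $\gamma_{t'}$ for \emph{some} $t'$, not for $t'=t$; your construction, if it worked, would prove the stronger (and false) statement with $t'=t$.

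Two smaller points. First, you locate the technical difficulty in bounding the length of excised bigon arcs; that part is essentially free, since a resolution differs from the original curve only inside arbitrarily small neighbourhoods of the crossings, which is all that \Cref{def:Chambers_homo-isotopy} requires. Second, the possible orientation reversal of $\overline{\gamma_1}$ is not governed by the parity of triple-point events along the homotopy; it arises because the path through the resolution graph may terminate at $\gamma_1$ traversed with the opposite orientation.
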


Informally, we think of \Cref{def:Chambers_homo-isotopy} and \Cref{Chambers_homo-isotopy} as follows. When a curve $\alpha$ self-intersects in the plane, each crossing point can be \textbf{resolved} in one of two ways by reconnecting the endpoints in a small ball around the crossing point, see \Cref{pic_resolution} on the right. When all the crossings have been resolved in some way and we obtain a simple closed curve $\alpha'$, we say that $\alpha'$ is a \textbf{resolution} of $\alpha$. The theorem states that if we have a homotopy $\gamma$ on a surface between two simple curves $\gamma_0$ and $\gamma_1$, one can obtain an \emph{isotopy} $\overline{\gamma}$ between $\gamma_0$ and $\gamma_1$\footnote{Or $\gamma_1$ with its orientation reversed. Since it is safe to disregard orientations for our purpose, we always consider curves up to orientation reversal.} where each intermediate curve $\overline{\gamma_t}$ is a resolution of some intermediate curve $\gamma_{t'}$. Note that, here, the times $t$ and $t'$ may not coincide. 

If $M$ is endowed with a metric (for example, a Riemannian one), the lengths of $\gamma_t$ and of $\overline{\gamma_{t'}}$ differ by an arbitrarily small quantity. Hence, \Cref{Chambers_homo-isotopy} immediately implies that, for all $\varepsilon>0$, if there exists a homotopy between two simple curves $\gamma_0$ and $\gamma_1$ where each intermediate curve has length at most $\ell$, then there also exists an isotopy between $\gamma_0$ and $\gamma_1$ where each intermediate curve has length at most $\ell+\varepsilon$.

\begin{figure}[ht]
\begin{center}
\begin{tikzpicture}
\def\ec{1.5}
\clip (-0.8,-2.6) rectangle (11.8,3);

\begin{scope}[xshift = 0cm]
\coordinate (a) at (1,2);
\coordinate	(b) at (0,1);
\coordinate	(c) at (1,0);
\coordinate	(d) at (2,1);
\coordinate	(e) at (0.5,-1);
\coordinate	(f) at (1.5,-1);
\node [white,circle, inner sep = \ec pt] (na) at (a) {};
\node [white,circle, inner sep = \ec pt] (nb) at (b) {};
\node [white,circle, inner sep = \ec pt] (nc) at (c) {};
\node [white,circle, inner sep = \ec pt] (nd) at (d) {};
\node [white,circle, inner sep = \ec pt] (ne) at (e) {};
\node [white,circle, inner sep = \ec pt] (nf) at (f) {};

\begin{scope}[thick]
\draw (na.south east) -- (na.north west);
\draw (nb.south west) -- (nb.north east);
\draw (nc.south west) -- (nc.north east);
\draw (nd.north west) -- (nd.south east);
\draw (ne.south) -- (ne.north);
\draw (nf.east) -- (nf.west);

\draw (na.south west) -- (nb.north east);
\draw (nb.south east) -- (nc.north west);
\draw (nc.north east) -- (nd.south west);
\draw (nd.north west) -- (na.south east);
\draw (ne.east) -- (nf.west);
\draw (nc.south west) .. controls +(-0.4,-0.4) and \control{(ne.north)}{(0,0.2)};
\draw (nc.south east) .. controls +(0.4,-0.4) and \control{(nf.north)}{(0,0.2)};
\draw (na.north east) .. controls +(1.4,1.4) and \control{(nd.north east)}{(1.4,1.4)};
\draw (na.north west) .. controls +(-1.4,1.4) and \control{(nb.north west)}{(-1.4,1.4)};
\draw (ne.south) .. controls +(0,-1) and \control{(nf.south)}{(0,-1)};
\draw (nb.south west) .. controls +(-1,-1) and \control{(ne.west)}{(-0.75,0)};
\draw (nd.south east) .. controls +(1,-1) and \control{(nf.east)}{(0.75,0)};
\end{scope}
\end{scope}

\begin{scope}[xshift = 4.5cm]
\coordinate (a) at (1,2);
\coordinate	(b) at (0,1);
\coordinate	(c) at (1,0);
\coordinate	(d) at (2,1);
\coordinate	(e) at (0.5,-1);
\coordinate	(f) at (1.5,-1);
\node [white,circle, inner sep = \ec pt] (na) at (a) {};
\node [white,circle, inner sep = \ec pt] (nb) at (b) {};
\node [white,circle, inner sep = \ec pt] (nc) at (c) {};
\node [white,circle, inner sep = \ec pt] (nd) at (d) {};
\node [white,circle, inner sep = \ec pt] (ne) at (e) {};
\node [white,circle, inner sep = \ec pt] (nf) at (f) {};

\begin{scope}[thick]
\draw (na.south east) -- (na.north west);
\draw (na.south west) -- (na.north east);
\draw (nb.south west) -- (nb.north east);
\draw (nb.south east) -- (nb.north west);
\draw (nc.south east) -- (nc.north west);
\draw (nc.south west) -- (nc.north east);
\draw (nd.south west) -- (nd.north east);
\draw (nd.south east) -- (nd.north west);
\draw (ne.south) -- (ne.north);
\draw (ne.west) -- (ne.east);
\draw (nf.east) -- (nf.west);
\draw (nf.north) -- (nf.south);

\draw (na.south west) -- (nb.north east);
\draw (nb.south east) -- (nc.north west);
\draw (nc.north east) -- (nd.south west);
\draw (nd.north west) -- (na.south east);
\draw (ne.east) -- (nf.west);
\draw (nc.south west) .. controls +(-0.4,-0.4) and \control{(ne.north)}{(0,0.2)};
\draw (nc.south east) .. controls +(0.4,-0.4) and \control{(nf.north)}{(0,0.2)};
\draw (na.north east) .. controls +(1.4,1.4) and \control{(nd.north east)}{(1.4,1.4)};
\draw (na.north west) .. controls +(-1.4,1.4) and \control{(nb.north west)}{(-1.4,1.4)};
\draw (ne.south) .. controls +(0,-1) and \control{(nf.south)}{(0,-1)};
\draw (nb.south west) .. controls +(-1,-1) and \control{(ne.west)}{(-0.75,0)};
\draw (nd.south east) .. controls +(1,-1) and \control{(nf.east)}{(0.75,0)};
\end{scope}
\end{scope}

\begin{scope}[xshift = 9cm]
\def\ec{4}
\coordinate (a) at (1,2);
\coordinate	(b) at (0,1);
\coordinate	(c) at (1,0);
\coordinate	(d) at (2,1);
\coordinate	(e) at (0.5,-1);
\coordinate	(f) at (1.5,-1);
\node [white,circle, inner sep = \ec pt] (na) at (a) {};
\node [white,circle, inner sep = \ec pt] (nb) at (b) {};
\node [white,circle, inner sep = \ec pt] (nc) at (c) {};
\node [white,circle, inner sep = \ec pt] (nd) at (d) {};
\node [white,circle, inner sep = \ec pt] (ne) at (e) {};
\node [white,circle, inner sep = \ec pt] (nf) at (f) {};

\begin{scope}[thick]
\draw (na.south east) .. controls (na) and (na) .. (na.south west);
\draw (na.north east) .. controls (na) and (na) .. (na.north west);
\draw (nb.south east) .. controls (nb) and (nb) .. (nb.north east);
\draw (nb.south west) .. controls (nb) and (nb) .. (nb.north west);
\draw (nc.south east) .. controls (nc) and (nc) .. (nc.south west);
\draw (nc.north east) .. controls (nc) and (nc) .. (nc.north west);
\draw (nd.south east) .. controls (nd) and (nd) .. (nd.south west);
\draw (nd.north east) .. controls (nd) and (nd) .. (nd.north west);
\draw (ne.east) .. controls (ne) and (ne) .. (ne.south);
\draw (ne.north) .. controls (ne) and (ne) .. (ne.west);
\draw (nf.east) .. controls (nf) and (nf) .. (nf.south);
\draw (nf.north) .. controls (nf) and (nf) .. (nf.west);

\draw (na.south west) -- (nb.north east);
\draw (nb.south east) -- (nc.north west);
\draw (nc.north east) -- (nd.south west);
\draw (nd.north west) -- (na.south east);
\draw (ne.east) -- (nf.west);
\draw (nc.south west) .. controls +(-0.4,-0.4) and \control{(ne.north)}{(0,0.01)};
\draw (nc.south east) .. controls +(0.4,-0.4) and \control{(nf.north)}{(0,0.01)};
\draw (na.north east) .. controls +(1.3,1.3) and \control{(nd.north east)}{(1.3,1.3)};
\draw (na.north west) .. controls +(-1.3,1.3) and \control{(nb.north west)}{(-1.3,1.3)};
\draw (ne.south) .. controls +(0,-0.85) and \control{(nf.south)}{(0,-0.85)};
\draw (nb.south west) .. controls +(-1,-1) and \control{(ne.west)}{(-0.5,0)};
\draw (nd.south east) .. controls +(1,-1) and \control{(nf.east)}{(0.5,0)};
\end{scope}
\end{scope}
\end{tikzpicture}
\caption{Left: A diagram of the unknot $U$. Middle: A projection $\mathcal{U}_t$ of this unknot, where the crossing information has been forgotten. Right: A resolution of $\mathcal{U}_t$.}
\label{pic_resolution}
\end{center}
\end{figure}
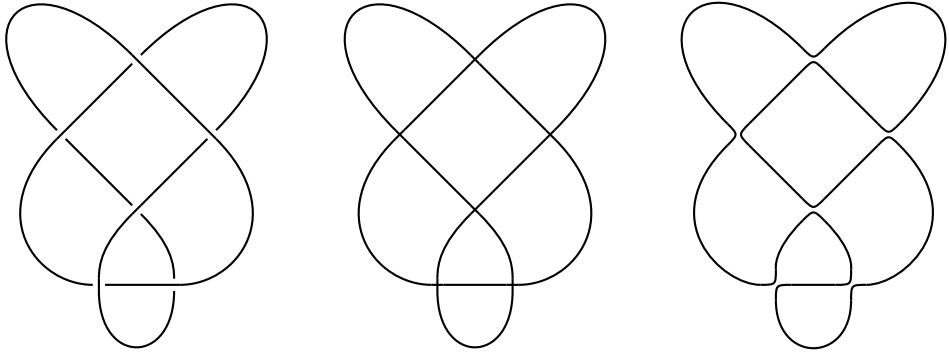

\Cref{Chambers_homo-isotopy} can be applied as a black-box to prove \Cref{prop_iso_U} in the particular case where the sublink $M$ stays invariant throughout the sequence of Reidemeister moves $R$, i.e., $\Phi_R(M,t)=\Phi_R(M,0)$ for all $t \in [0,1]$. Indeed, in this case, we can think of $\mathcal{M}_t$ as a discrete metric measuring the length of a curve $\mathcal{U}_t$ by its number of intersections with $\mathcal{M}_t$. More formally, we can take $\gamma$ to be the homotopy $\phi_U$ between $\mathcal{U}_0$ and $\mathcal{U}_1$, which are both simple closed curves by the definition of the link diagram $\mathcal{D}$. Applying \Cref{Chambers_homo-isotopy} provides us with an isotopy $h$ between $\mathcal{U}_0$ and $\mathcal{U}_1$ where all intermediate curves $\mathcal{U}'_t$ are obtained from resolving intersections of some $\mathcal{U}_{t'}$. In particular, for all $t \in [0,1]$, the number of intersections between $\mathcal{U}'_t$ and $\mathcal{M}_t$ is at most the number of intersections between $\mathcal{U}_{t'}$ and $\mathcal{M}_t$, which is at most $k$ since $\mathcal{M}_t=\mathcal{M}_0$ does not depend on $t$.

A careful reading of~\cite{Chambers_homo-isotopy} shows that the general case of \Cref{prop_iso_U}, where the diagram $\mathcal{M}_t$ of the sublink $M$ evolves during the sequence of Reidemeister moves, can also be obtained using the same proof techniques: The basic idea of the proof of \Cref{Chambers_homo-isotopy} is to first decompose the homotopy of $\gamma$ into a sequence of local moves, and then replace each curve $\gamma_t$ by one of its resolutions.

For the first step, they track discrete times where the self-intersection pattern (i.e., the homeomorphism type) of $\gamma_t$ changes. By an argument similar to the proof of the Reidemeister theorem, one can assume that this only happens at \textbf{critical events}, when a curve $\gamma_t$ undergoes a homotopy move, which is a transformation analogue to a Reidemeister move but without any crossing information. This step is formalised by their Proposition~2.1 and Lemma~2.2. Note that this step is unnecessary for us, since by construction the local transformations undergone by $\phi_U$ are projections of Reidemeister moves.

Between these critical events, any homotopy of a curve $\gamma_t$ can be applied similarly to any of its resolutions, yielding an isotopy. Hence, the idea is to replace each of these Reidemeister moves by a resolution of the move, as explained by their Figure~$2$. However, doing so in a naive way runs into discontinuity issues, a basic example of which is detailed in \cite[Example~2]{Chambers_homo-isotopy}, which is associated to their Figures $3$ and $4$. Therefore, the authors provide a more intricate workaround: the key to the proof of \Cref{Chambers_homo-isotopy} is to show how to choose the correct resolutions and connect their isotopies together. This is achieved by defining an auxiliary graph of resolutions (see their Figure~7), synthesising how they are connected by local isotopies. The precise definition for this graph follows their Figure~8. The proof is then finalised by finding a path through this graph by using the handshaking lemma~\cite{Euler_bridges}. 

In our case, the critical events are exactly the times $t$ when $\mathcal{U}_t$ undergoes a Reidemeister move. In-between these critical events, there are other Reidemeister moves involving either (i) both $\mathcal{U}_t$ and $\mathcal{M}_t$, or (ii) only $\mathcal{M}_t$. Note that in case (i), the Reidemeister move only changes the relative position of $\mathcal{U}_t$ and $\mathcal{M}_t$, and hence such a move can be treated as leaving the isotopy type of $\mathcal{M}_t$ unchanged. Therefore the homotopy of $\mathcal{U}_t$ can be used as an isotopy of any of its resolutions in this case. In case (ii), $\mathcal{M}_t$ changes, but $\mathcal{U}_t$, considered up to isotopy, does not. Therefore, by applying the same Reidemeister moves to $\mathcal{M}_t$, any motion of $\mathcal{U}_t$ between critical events can be applied to any of its resolutions while preserving the number of intersections with $\mathcal{M}_t$. These motions can then be connected using the same handshaking argument as in the proof of \Cref{Chambers_homo-isotopy}. Summarising, the proof technique directly adapts to the case of an evolving metric, as long as these evolutions are applied appropriately throughout the new sequence of isotopies.

\begin{remark}
We emphasise that in this section, we apply the framework of \Cref{Chambers_homo-isotopy} to the \emph{projection} of $U$ without crossing information, which is thus considered as a closed, non-embedded, curve in the projection plane $\mathbb{R}^2$. Instead, it might be tempting to apply these techniques \emph{directly at the level of Reidemeister moves}. This way one may hope to prove that in any sequence of Reidemeister moves splitting $\mathcal{U}$ from $\mathcal{M}$, one can assume that $\mathcal{U}$ remains simple while preserving a bound on the number of added crossings. However, this proof strategy fails because some resolutions applied to $\mathcal{U}$ can block the application of RIII moves involving $\mathcal{U}$ and $\mathcal{M}$. Such a case is pictured in Figure~\ref{pic_reidemeister_issues}$(B)$.
\end{remark}

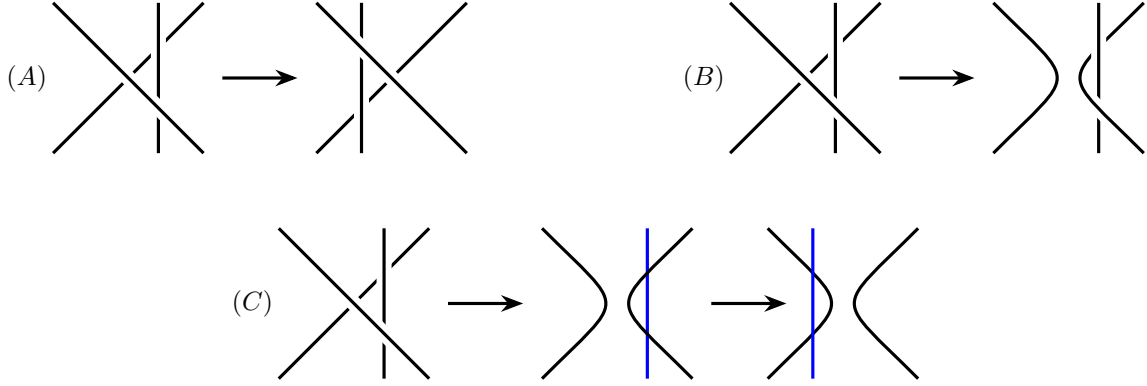
\begin{figure}
\begin{center}
\begin{tikzpicture}[scale=0.9]
\clip (-0.75,-3.1) rectangle (14.6,2.1);
\begin{scope}[very thick]
\draw (0,0) -- ++(2,2);
\draw [line width = 0.2 cm, white] (1.4,0) -- ++(0,2);
\draw (1.4,0) -- ++(0,2);
\draw [line width = 0.2 cm, white] (2,0) -- ++(-2,2);
\draw (2,0) -- ++(-2,2);

\draw [-Stealth] (2.25,1) -- (3.25,1) node [midway, above] {};

\draw (3.5,0) -- ++(2,2);
\draw [line width = 0.2 cm, white] (4.1,0) -- ++(0,2);
\draw (4.1,0) -- ++(0,2);
\draw [line width = 0.2 cm, white] (5.5,0) -- ++(-2,2);
\draw (5.5,0) -- ++(-2,2);

\begin{scope}[xshift=9cm]
\draw (0,0) -- ++(2,2);
\draw [line width = 0.2 cm, white] (1.4,0) -- ++(0,2);
\draw (1.4,0) -- ++(0,2);
\draw [line width = 0.2 cm, white] (2,0) -- ++(-2,2);
\draw (2,0) -- ++(-2,2);

\draw [-Stealth] (2.25,1) -- (3.25,1);

\draw (3.5,0) .. controls +(45:1.6) and \control{(3.5,2)}{(-45:1.6)};
\draw [line width = 0.2 cm, white] (4.9,0) -- ++(0,2);
\draw (4.9,0) -- ++(0,2);
\begin{scope}[even odd rule]
\clip (4.8,1) rectangle (5,2) (3.4,-0.1) rectangle (5.6,2.1);
\draw [line width = 0.2 cm, white] (5.5,0) .. controls +(135:1.6) and \control{(5.5,2)}{(-135:1.6)};
\draw (5.5,0) .. controls +(135:1.6) and \control{(5.5,2)}{(-135:1.6)};
\end{scope}
\end{scope}
\begin{scope}[yshift=-3cm, xshift = 3cm]
\draw (0,0) -- ++(2,2);
\draw [line width = 0.2 cm, white] (1.4,0) -- ++(0,2);
\draw (1.4,0) -- ++(0,2);
\draw [line width = 0.2 cm, white] (2,0) -- ++(-2,2);
\draw (2,0) -- ++(-2,2);

\draw [-Stealth] (2.25,1) -- (3.25,1);

\draw (3.5,0) .. controls +(45:1.6) and \control{(3.5,2)}{(-45:1.6)};
\draw [blue] (4.9,0) -- ++(0,2);
\begin{scope}[even odd rule]
\draw (5.5,0) .. controls +(135:1.6) and \control{(5.5,2)}{(-135:1.6)};
\end{scope}
\draw [-Stealth] (5.75,1) -- (6.75,1);
\begin{scope}[xshift = 3cm]
\draw (3.5,0) .. controls +(45:1.6) and \control{(3.5,2)}{(-45:1.6)};
\draw [blue] (4.1,0) -- ++(0,2);
\begin{scope}[even odd rule]
\draw (5.5,0) .. controls +(135:1.6) and \control{(5.5,2)}{(-135:1.6)};
\end{scope}
\end{scope}
\end{scope}

\node at (-0.35,1) {$(A)$};
\node at (8.65,1) {$(B)$};
\node at (2.65,-2) {$(C)$};
\end{scope}
\end{tikzpicture}
\caption{$(A)$ Reidemeister III move. $(B)$ a problematic resolution of $\mathcal{U}_t$. $(C)$ The unknot $\mathcal{U}_t$ (black) seen as a curve sweeping through $\mathcal{M}$ (blue).}
\label{pic_reidemeister_issues}
\end{center}
\end{figure}

\section{Sweep-outs, bubble tangles and proof of the lower bound}\label{sec_tangle}

For the remainder of this article, we use \Cref{prop_iso_U} to assume that the homotopy $\phi_U : \Sp^1 \times [0,1] \to \R^2$ sweeping $U$ across $M$ is an isotopy.
In other words, let $h$ be the isotopy of $\mathcal{U}$ in $\R^2$ and $\Phi'$ be the ambient isotopy of $M$ in $\Sp^3$ from \Cref{prop_iso_U}, and for all $t \in [0,1]$, we replace $\phi_U (\Sp^1, t)$ by $h (\Sp^1, t) = \mathcal{U}_t$ which is a simple closed curve.

\subsection{A sweep-out of $2$-spheres}
\label{sec:sweepout}

For each $t$, we associate a $2$-sphere $S_t$ to the simple curve $\mathcal{U}_t$, by extending $\mathcal{U}_t$ infinitely towards the direction of the projection $p$. Seen in $\Sp^3$ this produces a torus pinched at $\infty$, and cutting the surface at $\infty$ yields $S_t$ (see \Cref{pic_sphere_from_circle}). By construction, $S_t$ intersects $M_t = \Phi'(M,t)$ only in the pre-images of $\mathcal{U}_t \cap \mathcal{M}_t$ by the projection $p$. Altogether, this produces a continuous family of spheres $S_t$ sweeping a continuous family of links $M_t$. By applying the ambient isotopy $\Phi'^{-1}$ to $S_t$ and $M_t$, we can assume that $M_t$ is fixed. We slightly abuse notation and make this assumption, while still denoting the family of spheres by $S_t$. 

\begin{figure}[h]
\begin{center}
\begin{tikzpicture}[scale = 0.625]
\draw [blue, thick] (-2,2.5) arc (180:360:2 and 0.75);
\draw [blue, dotted] (-2,2.5) arc (180:0:2 and 0.75);
\node [blue, left] at (2,2.5) {$U_t$};
\fill [blue, opacity = 0.2] (-2,0) arc (180:360:2 and 0.75) -- ++(0,5) arc (0:180:2 and 0.75) -- cycle;
\draw [blue, opacity = 0.5] (-2,0) -- ++(0,5);
\draw [blue, opacity = 0.5] (2,0) -- ++(0,5);
\draw [blue, yshift = -2.5cm, opacity = 0.5] (-2,2.5) arc (180:360:2 and 0.75);
\draw [blue, dotted, yshift = -2.5cm, opacity = 0.5] (-2,2.5) arc (180:0:2 and 0.75);
\draw [blue, yshift = 2.5cm, opacity = 0.5] (-2,2.5) arc (180:360:2 and 0.75);
\draw [blue, yshift = 2.5cm, opacity = 0.5] (-2,2.5) arc (180:0:2 and 0.75);
\node [blue, opacity =0.8] at (0,5.25) {$\vdots$};
\node [blue, opacity =0.8] at (0,0) {$\vdots$};

\node at (-2,3) [left] {$\R^3$};
\draw [-Stealth] (2.5, 2.5) -- (3.5, 2.5);

\begin{scope}[xshift = 6cm]
\draw [blue, thick] (-2,2.5) arc (180:360:2 and 0.75);
\draw [blue, dotted] (-2,2.5) arc (180:0:2 and 0.75);
\node [blue, left] at (2,2.5) {$U_t$};
\filldraw [blue,fill opacity = 0.2, draw opacity = 0.5] (-2,2.5).. controls +(90:3) and \control{(4,2.5)}{(90:3)} .. controls + (90:1.5) and \control{(2,2.5)}{(90:1.5)} .. controls +(-90:1.5) and \control{(4,2.5)}{(-90:1.5)} .. controls +(-90:3) and \control{(-2,2.5)}{(-90:3)};
\node at (4,2.5) [left] {\small $\infty$};
\node at (-2.5,3) [left] {$\Sp^3$};
\draw [-Stealth] (4.5, 2.5) -- +(1,0);
\end{scope}

\begin{scope}[xshift = 14cm]
\draw [blue, thick] (-2,2.5) arc (180:360:2 and 0.75);
\draw [blue, dotted] (-2,2.5) arc (180:0:2 and 0.75);
\node [blue, left] at (2,2.5) {$U_t$};
\filldraw [blue,fill opacity = 0.2, draw opacity = 0.5] (-2,2.5).. controls +(90:2.9) and \control{(4,2.6)}{(90:2.9)} .. controls + (90:1.3) and \control{(2,2.5)}{(90:1.3)} .. controls +(-90:1.3) and \control{(4,2.4)}{(-90:1.3)} .. controls +(-90:2.9) and \control{(-2,2.5)}{(-90:2.9)};
\node at (1.75,1) [blue, left] {\small $S_t$};
\end{scope}
\end{tikzpicture}
\caption{Gluing two infinite annuli on $U$ and cutting the resulting pinched torus at $\infty$ in $\Sp^3$.}
\label{pic_sphere_from_circle}
\end{center}
\end{figure}
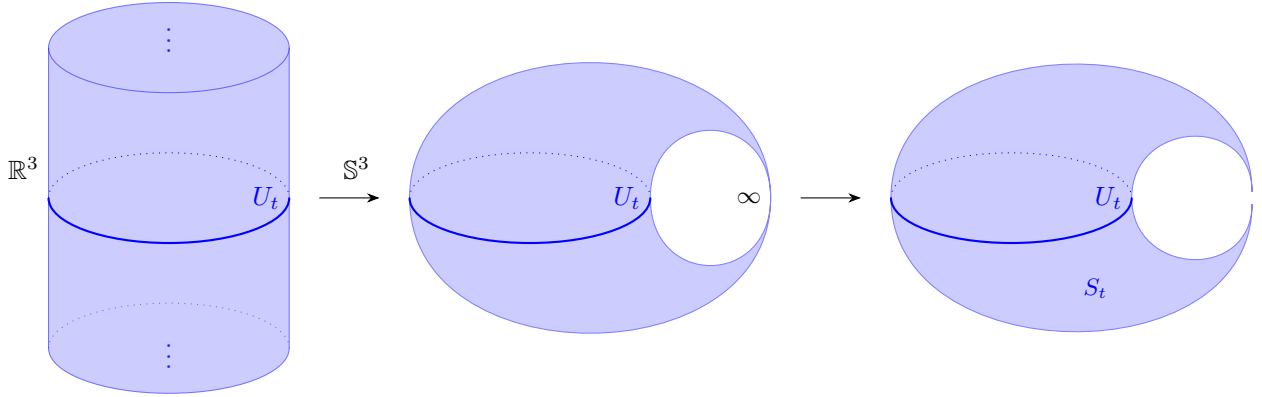

We now relate $\addcross(\mathcal{D}(n,n+1),R)$ to our continuous family of spheres $\{S_t\}_{t\in [0,1]}$.

\begin{lemma}\label{lem_inter_sphere}
We have $\sup_{t \in [0,1]} |S_t \cap M_t| -2 \leq \addcross(\mathcal{D}(n,n+1), R)$.
\end{lemma}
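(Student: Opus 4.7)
The plan is to bound $|S_t \cap M_t|$ in terms of $\cross(\mathcal{D}_t)$ by combining \Cref{prop_iso_U} with the Murasugi-based estimate used in the proof of \Cref{lem_min_cross_diag}. First I would identify $|S_t \cap M_t|$ with $|\mathcal{U}_t \cap \mathcal{M}_t|$: by construction, $S_t$ consists of the points of $\Sp^3$ projecting to $\mathcal{U}_t$ (together with the point at infinity), so $S_t \cap M_t = p^{-1}(\mathcal{U}_t) \cap M_t$. After a generic perturbation ensuring that $\mathcal{U}_t$ avoids the self-crossings of $\mathcal{M}_t$ and meets $\mathcal{M}_t$ transversely, each point of $\mathcal{U}_t \cap \mathcal{M}_t$ admits a unique lift in $M_t$, giving the identification $|S_t \cap M_t| = |\mathcal{U}_t \cap \mathcal{M}_t|$.

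Next I would apply \Cref{prop_iso_U} with $m := \sup_{s \in [0,1]} \cross(\mathcal{D}_s)$, which is finite since only finitely many combinatorial types of $\mathcal{D}_s$ appear along $R$. The proposition guarantees that, after the replacement carried out at the start of this section, the overlay of $\mathcal{M}_t = p(\Phi'(M,t))$ with the simple closed curve $\mathcal{U}_t = h(\Sp^1,t)$ has at most $m$ crossings. Since $\mathcal{U}_t$ is simple, these crossings split into self-crossings of $\mathcal{M}_t$ and transverse intersections between $\mathcal{U}_t$ and $\mathcal{M}_t$, yielding
\[
\cross(\mathcal{M}_t) + |\mathcal{U}_t \cap \mathcal{M}_t| \;\leq\; m.
\]

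Finally, since $\Phi'$ is an ambient isotopy of $\Sp^3$, the diagram $\mathcal{M}_t$ still represents the two linked torus knots comprising $M(n,n+1)$, so the Murasugi argument in the proof of \Cref{lem_min_cross_diag} (applied to the sublink $M$ alone) yields $\cross(\mathcal{M}_t) \geq 2(n^2-1) + 2 = 2n^2$. Combining this with $\cross(\mathcal{D}_0) = 2n^2 + 2$ gives
\[
|S_t \cap M_t| \;=\; |\mathcal{U}_t \cap \mathcal{M}_t| \;\leq\; m - 2n^2 \;=\; \addcross(\mathcal{D}(n,n+1),R) + 2,
\]
and taking the supremum over $t$ delivers the lemma. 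The only mild subtlety is the transversality/genericity assumption needed to identify $|S_t \cap M_t|$ with $|\mathcal{U}_t \cap \mathcal{M}_t|$; the remainder is straightforward bookkeeping built on \Cref{prop_iso_U} and \Cref{lem_min_cross_diag}.
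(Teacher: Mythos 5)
Your proposal is correct and follows essentially the same route as the paper: decompose the crossings at time $t$ into self-crossings of $\mathcal{M}_t$ plus intersections of the simple curve $\mathcal{U}_t$ with $\mathcal{M}_t$, bound the former below by $2n^2$ via the Murasugi estimate from \Cref{lem_min_cross_diag}, identify the latter with $|S_t \cap M_t|$, and compare with $\cross(\mathcal{D}_0)=2n^2+2$. Your explicit appeal to the overlay bound of \Cref{prop_iso_U} and the transversality remark are just slightly more spelled-out versions of what the paper does implicitly.
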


\begin{proof}
For $\mathcal{M}$ and $\mathcal{M}'$ two components of a link diagram, we denote by $\cross(\mathcal{M},\mathcal{M}')$ the number of crossings involving strands of $\mathcal{M}$ and $\mathcal{M}'$. If $\mathcal{M} = \mathcal{M}'$, then $\cross(\mathcal{M},\mathcal{M}')$ is the number of self-crossings of $\mathcal{M}$.

Since $\mathcal{U}_t$ is simple for all times $t$, we have $\cross(\mathcal{D}_t) = \cross(\mathcal{M}_t,\mathcal{U}_t) + \cross(\mathcal{M}_t,\mathcal{M}_t)$. By definition, $\mathcal{D}_t$ is the diagram obtained from $\mathcal{D}(n,n+1)$ at time $t$ of the sequence of Reidemeister moves $R$, and is hence equivalent to $\mathcal{D}(n,n+1)$. It follows from \Cref{lem_min_cross_diag}, that $\cross(\mathcal{M}_t,\mathcal{M}_t) \geq 2n^2$ at all times $t$. Furthermore, $\cross(\mathcal{M}_t,\mathcal{U}_t) = |S_t \cap M_t|$ by construction of $S_t$. By definition of $\mathcal{D}(n,n+1) = \mathcal{D}_0$, we have $\cross(\mathcal{D}_0)= 2n^2 +2$. Hence, $\cross(\mathcal{D}_t) - \cross(\mathcal{D}_0) \geq 2n^2 + |M_t \cap S_t| - 2n^2 -2 $.

Thus, $\sup_{t \in [0,1]} |S_t \cap M_t| -2 \leq \sup_{t \in [0,1]} \{\cross(\mathcal{D}_t) - \cross(\mathcal{D}_0)\} = \addcross(\mathcal{D}(n,n+1), R)$.
\end{proof}

\subsection{Bubble tangles}
\label{sec:bubbletangles}

The aim for the remainder of this section is to use results from \cite{Lunel_spherewidth_out} to exhibit an obstruction to $|S_t \cap M_t|$ remaining small throughout the sweep-out defined in \Cref{sec:sweepout}.

The key objects that we rely on are bubble tangles. Before introducing those, we need the following definitions. A \textbf{double bubble} consists of two spheres intersecting on a disc (see the left of \Cref{pic_double_bub_trivial} for an illustration). The complement of a double bubble consists of three balls, and we say that they are \textbf{induced} by the double bubble. A $3$-ball in $\Sp^3$ is said to be \textbf{$L$-trivial}, if it intersects $L \subset \Sp^3$ in a single unknotted segment or in the empty set, see \Cref{pic_double_bub_trivial} on the middle for an example and on the right for a non-example.

\begin{definition}[\cite{Lunel_spherewidth_out}, Definition 2.3]
\label{def:bubbletangle}
Let $L \in \Sp^3$ be a link and let $k \in \mathbb{N}$. A \textbf{bubble tangle $\T$ of order $k > 2$}, is a collection of closed balls in $\Sp^3$ such that: 
\begin{enumerate}[label = (T\arabic*)]
	\item \label{def_T1} For all balls $B \in \mathcal{T}$, we have $|\partial B \cap L| < k$.
	\item \label{def_T2} For all $2$-spheres $S \in \Sp^3$ transverse to $L$, if $|S \cap L| < k$, then exactly one of the two\footnote{By the PL Schoenflies theorem~\cite[Theorem~XIV.1]{bing}, a PL $2$-sphere in $\Sp^3$ bounds exactly two balls.} balls $B_i$, $i \in \{1, 2\}$ with boundary $S$ is in $\T$. 
	\item \label{def_T3} For all triples of balls $\{ X,Y,Z \}$, if $\{ X,Y,Z \}$ induces a double bubble transverse to $L$, then $\{ X,Y,Z \} \not \subset~\T$.  
	\item \label{def_T4} For every closed ball $B$ in $\Sp^3$, if $B$ is $L$-trivial, then $B \in \T$.
\end{enumerate}
\end{definition}

Informally, we think of a bubble tangle as a way to choose, for each sphere $S$ having less than $k$ intersections with $L$, one of the two balls that it bounds. We refer to this ball as the \textbf{small side} of $S$, with the intuition that this small side intersects $L$ in a simpler pattern than the other side. The essential property of a bubble tangle is the requirement that we cannot cover all of $\Sp^3$ using three small sides of a triple of spheres arranged in a double bubble. One of the main results of~\cite{Lunel_spherewidth_out} states that if a bubble tangle of $L$ of order $k$ exists, any sweep-outs of $\Sp^3$ by $2$-spheres contains at least one sphere with at least $k$ intersections with $L$. As discussed in the introduction, our sweep-outs differ from those in \cite{Lunel_spherewidth_out}. However, we show that despite their differences, a bubble tangle can also be used to preclude the existence of a sweep-out $S_t$ having few intersections with $M$.

\begin{figure}[ht]
\begin{center}
\begin{tikzpicture}
\def\e{0.1cm}
\def\p{1.25}
\draw [red!50!purple, thick] (5,0.75) circle (1.5);
\fill [opacity = 0.05] (5,0.75) circle (1.5);
\draw [red!50!purple] (6.5,0.75) arc (0: -180: 1.5 and 0.6);
\draw [red!50!purple, dotted] (6.5,0.75) arc (0:180: 1.5 and 0.6);
\draw (3.5, 0.75) -- (6.5,0.75);

\begin{scope}[xshift = 9cm, yshift = 0.75 cm]
\coordinate (a) at ($(90:0.35)+(0,0.35)$);
\coordinate (b) at ($(-30:0.35)+(0,0.35)$);
\coordinate (c) at ($(-150:0.35)+(0,0.35)$);
\begin{scope}[even odd rule]
\clip (0,0) circle (1.5) (c) circle (\e);
\draw (-1.5,0) .. controls +(0:0.4) and \control{(c)}{(-120:0.4)} -- (a);
\end{scope}
\begin{scope}[even odd rule]
\clip (0,0) circle (1.5) (b) circle (\e);
\draw (a) .. controls +(60:\p) and \control{(b)}{(0:\p)} -- (c);
\end{scope}
\begin{scope}[even odd rule]
\clip (0,0) circle (1.5) (a) circle (\e);
\draw (1.5,0) .. controls +(0:-0.4) and \control{(b)}{(-60:0.4)} -- (a) .. controls +(120:\p) and \control{(c)}{(180:\p)};
\end{scope} 
\draw [red!50!purple] (0,0) circle (1.5);
\draw [red!50!purple] (1.5,0) arc (0: -180: 1.5 and 0.6);
\draw [red!50!purple, dotted] (1.5,0) arc (0: 180: 1.5 and 0.6);
\fill [opacity = 0.05] (0,0) circle (1.5);
\end{scope}

\begin{scope}[xshift = 1cm, yshift = 0.75 cm]
\filldraw [red!50!purple, fill opacity = 0.2] (140:1.5) arc (140:-140:1.5) arc (-40:-320:1.5);
\fill [red!50!purple, fill opacity = 0.2] (140:1.5) .. controls +(0:0.25) and \control{(-140:1.5)}{(0:0.25)} .. controls +(180:0.25) and \control{(140:1.5)}{(180:0.25)};
\draw [red!50!purple] (140:1.5) .. controls +(0:0.25) and \control{(-140:1.5)}{(0:0.25)};
\draw [red!50!purple, dotted] (140:1.5) .. controls +(180:0.25) and \control{(-140:1.5)}{(180:0.25)};
\end{scope}
\end{tikzpicture}
\caption{Left: a double bubble. Middle: an $L$-trivial ball. Right: a ball that is not $L$-trivial.}
\label{pic_double_bub_trivial}
\end{center}
\end{figure}
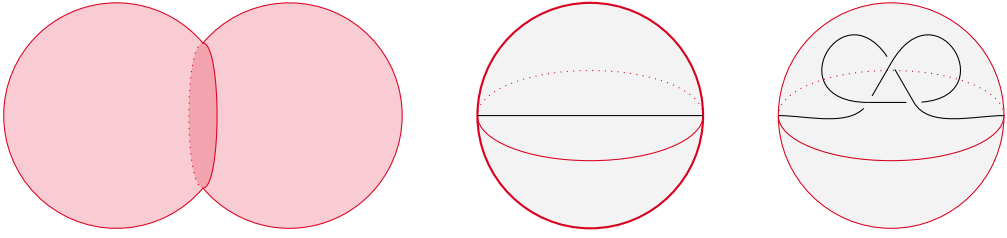

We denote by $M^1$ and $M^2$ the two torus links composing $M$.
By definition, each of them can be embedded on a standard torus $\To$ as pictured in \Cref{pic_link_embed}. A simple curve on $\Sigma \subseteq \mathbb{S}^3$ is \textbf{compressible} if it is non-contractible on $\Sigma$ and bounds a disk in $\mathbb{S}^3\setminus \Sigma$. The \textbf{compression-representativity} of a knot $K$ embedded on a surface $\Sigma$ is the minimum number of intersections between $K$ and a compressible curve in $\Sigma$. The compression-representativity of $M^1$ and $M^2$ is $n$; see \cite[Section~5]{Lunel_spherewidth_out}. Note that, since $M^1$ and $M^2$ are linked, their underlying tori intersect, but this is of no consequence for us. By \cite[Theorem~1.2]{Lunel_spherewidth_out}, there exist two bubble tangles $\T^1, \T^2$ of order $\frac{2}{3}n$, where each $\T^i$ is the bubble tangle defined by $M^i$ while completely forgetting about the other torus knot. Thus, $\T^i$ is a bubble tangle \emph{induced} by the torus on which $M^i$ is embedded. Such bubble tangles induced by a surface are called \textbf{compression bubble tangles}: given a link embedded on a surface $\Sigma$, the small sides of the bubble tangle are the balls $B$ which are either disjoint from $\Sigma$ or intersect $\Sigma$ trivially, i.e., the inclusion $i: B \cap \Sigma \rightarrow \Sigma$ is $\pi_1$-trivial, which means that it induces a trivial map on fundamental groups. For example, the annulus and disc obtained by intersecting the torus and green ball of \Cref{pic_trivial_intersections} have a $\pi_1$-trivial inclusion in the torus, but the annulus stemming from the intersection between the red ball and the torus does not.

\begin{figure}[ht]
\begin{center}
\begin{tikzpicture}
\clip (-6,-2.75) rectangle (6.2,2.75);
\coordinate (r1) at (1.75,0.25);
\coordinate (r2) at (1.5,-0.12);
\coordinate (r3) at (-1.5,-0.12);
\coordinate (r4) at (-1.75,0.25);

\fill [opacity = 0.2, blue] (0,0) circle (4 and 2.5) (r2) .. controls +(-150:1) and \control{(r3)}{(-30:1)} .. controls +(70:0.75) and \control{(r2)}{(110:0.75)};

\path [name path = torus] (0,0) circle (4 and 2.5);
\path [name path = intorus] (r2) .. controls +(-150:1) and \control{(r3)}{(-30:1)} .. controls +(70:0.75) and \control{(r2)}{(110:0.75)}; 
\path [name path = risp] (-3.1,0) circle (2 and 1.25); 
\path [name intersections={of=torus and risp,total=\tot}]
\foreach \s in {1,...,\tot}{coordinate (it\s) at (intersection-\s)};
\path [name intersections={of=intorus and risp,total=\tot}]
\foreach \s in {1,...,\tot}{coordinate (ii\s) at (intersection-\s)};

\begin{scope}[red!70!purple]
\begin{scope}[even odd rule]
\clip (-6,-2.75) rectangle (8,2.75) (-4,0) arc (-180:0:4 and 2.5) -- (r2) .. controls +(-150:1) and \control{(r3)}{(-30:1)} -- (-4,0);
\draw (-3.25,0) circle (2.5 and 1.75); 
\end{scope}
\begin{scope}
\clip (-4,0) arc (-180:0:4 and 2.5) -- (r2) .. controls +(-150:1) and \control{(r3)}{(-30:1)} -- (-4,0);
\draw [dotted](-3.25,0) circle (2.5 and 1.75); 
\end{scope}
\fill [fill opacity = 0.2] (-3.25,0) circle (2.5 and 1.75); 
\draw [very thick, dashed] (it1) .. controls +(50:0.5) and \control{(ii2)}{(100:0.5)};
\draw [dashed] (ii2) .. controls +(-150:0.5) and \control{(it1)}{(-80:0.5)};
\draw [very thick, dashed] (it2) .. controls +(90:0.5) and \control{(ii1)}{(150:0.5)};
\draw [dashed] (ii1).. controls +(-110:0.5) and \control{(it2)}{(-30:0.5)};

\begin{scope}[even odd rule]
\clip (0,0) circle (4 and 2.5) (r2) .. controls +(-150:1) and \control{(r3)}{(-30:1)} .. controls +(70:0.75) and \control{(r2)}{(110:0.75)};
\fill [opacity = 0.15] (-5.75,0) -- (it1) .. controls +(50:0.5) and \control{(ii2)}{(100:0.5)} -- (ii1) .. controls +(150:0.5) and \control{(it2)}{(90:0.5)} -- cycle;
\fill [opacity = 0.15] (-5.75,0) -- (it1) .. controls +(-80:0.5) and \control{(ii2)}{(-150:0.5)} -- (ii1) .. controls +(-110:0.5) and \control{(it2)}{(-30:0.5)} -- cycle; 
\end{scope}
\end{scope}

\coordinate (s0) at (2,0.2);
\coordinate (s1) at (2.5,1);
\coordinate (s2) at ($(s1)+(0.5,-0.5)$);
\coordinate (s3) at (2.75,-1);
\coordinate (s4) at ($(s3)+(-0.35,-0.35)$);
\coordinate (s5) at (6,1);
\coordinate (s6) at ($(s5)+(-150:0.75)$);

\begin{scope}[green!70!purple]
\draw (s0) .. controls +(135:2.5) and \control{(s5)}{(90:3)} .. controls +(-90:3) and \control{(s4)}{(-45:2)} .. controls +(135:0.5) and \control{(s3)}{(135:0.5)} .. controls +(-45:1.5) and \control{(s6)}{(-90:2)} .. controls +(90:1.75) and \control{(s0)}{(-45:2.5)};
\fill [opacity = 0.1] (s0) .. controls +(135:2.5) and \control{(s5)}{(90:3)} .. controls +(-90:3) and \control{(s4)}{(-45:2)} .. controls +(135:0.5) and \control{(s3)}{(135:0.5)} .. controls +(-45:1.5) and \control{(s6)}{(-90:2)} .. controls +(90:1.75) and \control{(s0)}{(-45:2.5)};
\fill [rotate around ={-45:(2.75,0.75)}, opacity = 0.35] (s1) arc (180:0:0.35 and 0.15) .. controls +(90:0.75) and \control{(s1)}{(90:0.75)};
\draw [rotate around ={-45:(2.75,0.75)}] (s2) .. controls +(90:0.75) and \control{(s1)}{(90:0.75)};

\draw [very thick, dashed] [rotate around ={-45:(2.75,0.75)}] (2.75,0.75) circle (0.35 and 0.15) (2.75,0.75) circle (1.05 and 0.35);
\fill [rotate around ={-45:(2.75,0.75)}, opacity = 0.25, even odd rule] (2.75,0.75) circle (0.35 and 0.15) (2.75,0.75) circle (1.05 and 0.35);
\draw [very thick, dashed] (s4) .. controls +(5:0.15) and \control{(s3)}{(-95:0.15)} .. controls +(-175:0.15) and \control{(s4)}{(85:0.15)};
\fill [opacity = 0.25] (s4) .. controls +(5:0.15) and \control{(s3)}{(-95:0.15)} .. controls +(-175:0.15) and \control{(s4)}{(85:0.15)};
\end{scope}

\begin{scope}
\clip (s0) .. controls +(135:2.5) and \control{(s5)}{(90:3)} .. controls +(-90:3) and \control{(s4)}{(-45:2)} .. controls +(135:0.5) and \control{(s3)}{(135:0.5)} .. controls +(-45:1.5) and \control{(s6)}{(-90:2)} .. controls +(90:1.75) and \control{(s0)}{(-45:2.5)};
\draw [dotted, blue] (0,0) circle (4 and 2.5);
\end{scope}

\path [name path = rsp] (-3.25,0) circle (2.5 and 1.75); 
\path [name intersections={of=torus and rsp,total=\tot}]
\foreach \s in {1,...,\tot}{coordinate (ot\s) at (intersection-\s)};
\path [name intersections={of=intorus and rsp,total=\tot}]
\foreach \s in {1,...,\tot}{coordinate (oi\s) at (intersection-\s)};
\begin{scope}[dotted]
\clip (-5.75,0) -- (ot1) -- (oi2) -- (ii1) .. controls +(150:0.5) and \control{(it2)}{(90:0.5)} -- cycle;
\draw [blue] (0,0) circle (4 and 2.5);
\draw [blue](r1) .. controls +(-90:0.11) and \control{(r2)}{(30:0.15)} .. controls +(-150:1) and \control{(r3)}{(-30:1)} .. controls +(150:0.15) and \control{(r4)}{(-90:0.11)};
\draw [blue] (r2) .. controls +(110:0.75) and \control{(r3)}{(70:0.75)};
\end{scope}
\begin{scope}[even odd rule]
\clip (-6,-2.75) rectangle (6.2,2.75) (-5.75,0) -- (ot1) -- (oi2) -- (ii1) .. controls +(150:0.5) and \control{(it2)}{(90:0.5)} -- cycle;
\clip (-6,-2.75) rectangle (6.2,2.75) (s0) .. controls +(135:2.5) and \control{(s5)}{(90:3)} .. controls +(-90:3) and \control{(s4)}{(-45:2)} .. controls +(135:0.5) and \control{(s3)}{(135:0.5)} .. controls +(-45:1.5) and \control{(s6)}{(-90:2)} .. controls +(90:1.75) and \control{(s0)}{(-45:2.5)};
\draw [blue] (0,0) circle (4 and 2.5);
\draw [blue](r1) .. controls +(-90:0.11) and \control{(r2)}{(30:0.15)} .. controls +(-150:1) and \control{(r3)}{(-30:1)} .. controls +(150:0.15) and \control{(r4)}{(-90:0.11)};
\draw [blue] (r2) .. controls +(110:0.75) and \control{(r3)}{(70:0.75)};
\end{scope}
\end{tikzpicture}
\caption{The green ball intersects the torus trivially while the red ball does not.}
\label{pic_trivial_intersections}
\end{center}
\end{figure}
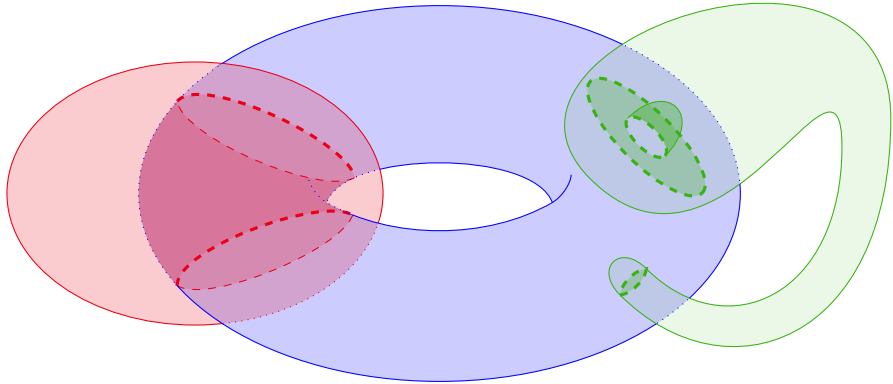

\begin{figure}[ht]
\begin{center}
\begin{tikzpicture}[scale = 0.42]
\def\p{13}
\def\b{7}
\def\c{0.25}
\def\l{0.5}
\def\diam{4}

\modulo{\b-1}{\b}{\q};
\pgfmathparse{360/\p}
\edef\a{\pgfmathresult}
\pgfmathparse{-\a}
\edef\d{\pgfmathresult}
\modulo{\q-1}{\q}{\qq}; 
\modulo{\p-1}{\p}{\pp}; 
\modulo{\qq-1}{\qq}{\qqq}; 

\foreach \i in {0,\qq}
{
\foreach \j in {0,1,...,\pp}
{
	\modulo{\i}{2}{\t};
	\coordinate (n-\i-\j) at (\a*\j+\t*\a/2+\d:1.5+\i);	
}
}
\foreach \j in {0,1,...,\pp}
{
	\coordinate (n-1-\j) at (\a*\j+\a/2+\d:2.5+\l);		
	\modulo{\qqq}{2}{\t}
	\coordinate (n-\qqq-\j) at (\a*\j+\t*\a/2+\d:1.5+\qqq-\l);
}

\foreach \j in {0,1,...,\pp}
{
\modulo{\j+1}{\p}{\jj};
\path (n-\qq-\j);
\pgfgetlastxy{\x}{\y};
\modulo{\qq}{2}{\t}
	\ifnum \t=1
		{		
		\modulo{\j-1+2*\t+\p}{\p}{\jj};
		\path (n-\qqq-\j);
		\pgfgetlastxy{\xx}{\yy};
		\path [reset cm] coordinate (co-\j+) at (2*\x-\xx,2*\y-\yy);
		\path (n-\qqq-\jj);
		\pgfgetlastxy{\xx}{\yy};
		\path [reset cm] coordinate (co-\j-) at (2*\x-\xx,2*\y-\yy);
		}
	\else	
		{
		\modulo{\j-1+2*\t+\p}{\p}{\jj};
		\path (n'-\qqq-\jj);
		\pgfgetlastxy{\xx}{\yy};		
		\path [reset cm] coordinate (co-\j+) at (2*\x-\xx,2*\y-\yy);
		\path (n-\qqq-\j);
		\pgfgetlastxy{\xx}{\yy};
		\path [reset cm] coordinate (co-\j-) at (2*\x-\xx,2*\y-\yy);
		}
	\fi
\path (n-0-\j);
\pgfgetlastxy{\x}{\y};
\modulo{\j-1+\p}{\p}{\jj};
\path (n-1-\jj);
\pgfgetlastxy{\xx}{\yy};
\path [reset cm] coordinate (ci-\j+) at (\x+\x*\c-\xx*\c,\y+\y*\c-\yy*\c);
\path (n-1-\j);
\pgfgetlastxy{\xx}{\yy};
\path [reset cm] coordinate (ci-\j-) at (\x+\x*\c-\xx*\c,\y+\y*\c-\yy*\c);
}
\modulo{\qq}{2}{\t}
\def\j{0}
\modulo{\j+1}{\p}{\jj};
\begin{scope}
\clip (0,0) -- (\a*\j+\t*\a/2+\d-\a/2:3.5+\qq) -- (\a*\j+\t*\a/2+\d:3.5+\qq) --cycle;
\modulo{\j+1}{\p}{\jj};
\end{scope}
\begin{scope}
\clip (0,0) -- (\a*\j+\t*\a/2+\d+\a/2:3.5+\qq) -- (\a*\j+\t*\a/2+\d:3.5+\qq) --cycle;
\modulo{\j+1}{\p}{\jj};
\end{scope}
\foreach \j in {0,...,\pp}
{
\begin{scope}
\clip (0,0) -- (\a*\j+\t*\a/2+\d+\a/2:3.5+\qq) -- (\a*\j+\t*\a/2+\d+\a:3.5+\qq) --cycle;
\modulo{\j+1}{\p}{\jj};
\draw [thick, opacity=0.3] (n-\qq-\j) .. controls (co-\j+) and (co-\jj-) .. (n-\qq-\jj);
\end{scope}
\begin{scope}
\clip (0,0) -- (\a*\j+\t*\a/2+\d+\a/2:3.5+\qq) -- (\a*\j+\t*\a/2+\d-\a:3.5+\qq) --cycle;
\modulo{\j+1}{\p}{\jj};
\draw [thick] (n-\qq-\j) .. controls (co-\j+) and (co-\jj-) .. (n-\qq-\jj);
\end{scope}
\begin{scope}
\clip (0,0) -- (\a*\j+\t*\a/2+\d-\a/2:3.5+\qq) -- (\a*\j+\t*\a/2+\d:3.5+\qq) --cycle;
\modulo{\j+1}{\p}{\jj};
\draw [thick, opacity=0.3] (n-0-\j) .. controls (ci-\j+) and (ci-\jj-) .. (n-0-\jj);
\end{scope}
\begin{scope}
\clip (0,0) -- (\a*\j+\t*\a/2+\d+\a/2:3.5+\qq) -- (\a*\j+\t*\a/2+\d:3.5+\qq) --cycle;
\modulo{\j+1}{\p}{\jj};
\draw [thick] (n-0-\j) .. controls (ci-\j+) and (ci-\jj-) .. (n-0-\jj);
\end{scope}
}

\euclidiv{\qq}{2}{\d};
\modulo{\qq}{2}{\t}	
\ifnum \t = 1
{
	\foreach \j in {0,...,\pp}
	{
	\foreach \i in {0,...,\qq}
	{
		\modulo{\j+\d}{\p}{\jj}
		\path [name path =b+] (n-0-\j) .. controls (n-1-\j) and (n-\qqq-\jj) .. (n-\qq-\jj);
		\modulo{\j+\i}{\p}{\i}
		\modulo{\i-\d-1+\p}{\p}{\ii}
		\modulo{\i-1+\p}{\p}{\iii}
		\modulo{\ii+1}{\p}{\iip}
		\path [name path =b-] (n-0-\i) .. controls (n-1-\iii) and (n-\qqq-\iip) .. (n-\qq-\ii);
		\path [name intersections={of=b+ and b-,total=\tot}]
		\foreach \s in {1,...,\tot}{coordinate (i-\j-\i) at (intersection-\s)};
	}
	}
}
\else
{
	\foreach \j in {0,...,\pp}
	{
	\foreach \i in {0,...,\qqq}
	{
		\modulo{\j+\d-1}{\p}{\jj}
		\modulo{\jj-1+\p}{\p}{\jjj}
		\path [name path =b+] (n-0-\j) .. controls (n-1-\j) and (n-\qqq-\jjj) .. (n-\qq-\jj);
		\modulo{\j+\i}{\p}{\i}
		\modulo{\i-\d+\p}{\p}{\ii}
		\modulo{\i-1+\p}{\p}{\im}
		\path [name path =b-] (n-0-\i) .. controls (n-1-\im) and (n-\qqq-\ii) .. (n-\qq-\ii);			
		\path [name intersections={of=b+ and b-,total=\tot}]
		\foreach \s in {1,...,\tot}{coordinate (i-\j-\i) at (intersection-\s)};
	}
	}
		
}
\fi

\ifnum \t = 1
{	
	\foreach \j in {0,...,\pp}
	{
		\modulo{\j-\d-1+\p}{\p}{\jj}
		\modulo{\jj+1}{\p}{\jjp}
		\modulo{\j-1+\p}{\p}{\jm}
		\draw [thick, opacity = 0.3] (n-0-\j) .. controls (n-1-\jm) and (n-\qqq-\jjp) .. (n-\qq-\jj);
	}
	\foreach \j in {0,...,\pp}
	{
		\modulo{\j+\d}{\p}{\jj}
		\draw [thick] (n-0-\j) .. controls (n-1-\j) and (n-\qqq-\jj) .. (n-\qq-\jj);
	}
}
\else
{
	\foreach \j in {0,...,\pp}
	{
		\modulo{\j-\d+\p}{\p}{\jj}
		\modulo{\j-1+\p}{\p}{\jm}
		\draw [thick, opacity = 0.3] (n-0-\j) .. controls (n-1-\jm) and (n-\qqq-\jj) .. (n-\qq-\jj);
	}
	\foreach \j in {0,...,\pp}
	{
		\modulo{\j+\d-1}{\p}{\jj}
		\modulo{\jj-1+\p}{\p}{\jjj}
		\draw [thick] (n-0-\j) .. controls (n-1-\j) and (n-\qqq-\jjj) .. (n'-\qq-\jj);
	}	
}
\fi


\begin{scope}[xshift=17 cm]
\modulo{\b-1}{\b}{\q};
\modulo{\q-1}{\q}{\qq}; 
\modulo{\p-1}{\p}{\pp}; 
\modulo{\qq-1}{\qq}{\qqq}; 
\pgfmathparse{360/\p}
\edef\a{\pgfmathresult}
\pgfmathparse{\a/2+\a*\pp/2-\a}
\edef\d{\pgfmathresult}

\foreach \i in {0,\qq}
{
\foreach \j in {0,1,...,\pp}
{
	\modulo{\i}{2}{\t};
	\coordinate (n'-\i-\j) at (\a*\j+\t*\a/2+\d:1.5+\i);	
}
}
\foreach \j in {0,1,...,\pp}
{
	\coordinate (n'-1-\j) at (\a*\j+\a/2+\d:2.5+\l);		
	\modulo{\qqq}{2}{\t}
	\coordinate (n'-\qqq-\j) at (\a*\j+\t*\a/2+\d:1.5+\qqq-\l);
}

\foreach \j in {0,1,...,\pp}
{
\modulo{\j+1}{\p}{\jj};
\path (n'-\qq-\j);
\pgfgetlastxy{\x}{\y};
\modulo{\qq}{2}{\t}
	\ifnum \t=1
		{		
		\modulo{\j-1+2*\t+\p}{\p}{\jj};
		\path (n'-\qqq-\j);
		\pgfgetlastxy{\xx}{\yy};
		\path [reset cm] coordinate (co'-\j+) at (2*\x-\xx,2*\y-\yy);
		\path (n'-\qqq-\jj);
		\pgfgetlastxy{\xx}{\yy};
		\path [reset cm] coordinate (co'-\j-) at (2*\x-\xx,2*\y-\yy);
		}
	\else	
		{
		\modulo{\j-1+2*\t+\p}{\p}{\jj};
		\path (n'-\qqq-\jj);
		\pgfgetlastxy{\xx}{\yy};		
		\path [reset cm] coordinate (co'-\j+) at (2*\x-\xx,2*\y-\yy);
		\path (n'-\qqq-\j);
		\pgfgetlastxy{\xx}{\yy};
		\path [reset cm] coordinate (co'-\j-) at (2*\x-\xx,2*\y-\yy);
		}
	\fi
\path (n'-0-\j);
\pgfgetlastxy{\x}{\y};
\modulo{\j-1+\p}{\p}{\jj};
\path (n'-1-\jj);
\pgfgetlastxy{\xx}{\yy};
\path [reset cm] coordinate (ci'-\j+) at (\x+\x*\c-\xx*\c,\y+\y*\c-\yy*\c);
\path (n'-1-\j);
\pgfgetlastxy{\xx}{\yy};
\path [reset cm] coordinate (ci'-\j-) at (\x+\x*\c-\xx*\c,\y+\y*\c-\yy*\c);
}
\def\j{0}
\modulo{\j+1}{\p}{\jj};
\begin{scope}
\clip (0,0) -- (\a*\j+\t*\a/2+\d-\a/2:3.5+\qq) -- (\a*\j+\t*\a/2+\d:3.5+\qq) --cycle;
\modulo{\j+1}{\p}{\jj};
\draw [thick, opacity=0.3] (n'-0-\j) .. controls (ci'-\j+) and (ci'-\jj-) .. (n'-0-\jj);
\end{scope}
\begin{scope}
\clip (0,0) -- (\a*\j+\t*\a/2+\d+\a/2:3.5+\qq) -- (\a*\j+\t*\a/2+\d:3.5+\qq) --cycle;
\modulo{\j+1}{\p}{\jj};
\draw [thick] (n'-0-\j) .. controls (ci'-\j+) and (ci'-\jj-) .. (n'-0-\jj);
\end{scope}
\foreach \j in {1,...,\pp}
{
\begin{scope}
\clip (0,0) -- (\a*\j+\t*\a/2+\d+\a/2:3.5+\qq) -- (\a*\j+\t*\a/2+\d+\a:3.5+\qq) --cycle;
\modulo{\j+1}{\p}{\jj};
\draw [thick, opacity=0.3] (n'-\qq-\j) .. controls (co'-\j+) and (co'-\jj-) .. (n'-\qq-\jj);
\end{scope}
\begin{scope}
\clip (0,0) -- (\a*\j+\t*\a/2+\d+\a/2:3.5+\qq) -- (\a*\j+\t*\a/2+\d-\a:3.5+\qq) --cycle;
\modulo{\j+1}{\p}{\jj};
\draw [thick] (n'-\qq-\j) .. controls (co'-\j+) and (co'-\jj-) .. (n'-\qq-\jj);
\end{scope}
\begin{scope}
\clip (0,0) -- (\a*\j+\t*\a/2+\d-\a/2:3.5+\qq) -- (\a*\j+\t*\a/2+\d:3.5+\qq) --cycle;
\modulo{\j+1}{\p}{\jj};
\draw [thick, opacity=0.3] (n'-0-\j) .. controls (ci'-\j+) and (ci'-\jj-) .. (n'-0-\jj);
\end{scope}
\begin{scope}
\clip (0,0) -- (\a*\j+\t*\a/2+\d+\a/2:3.5+\qq) -- (\a*\j+\t*\a/2+\d:3.5+\qq) --cycle;
\modulo{\j+1}{\p}{\jj};
\draw [thick] (n'-0-\j) .. controls (ci'-\j+) and (ci'-\jj-) .. (n'-0-\jj);
\end{scope}
}

\euclidiv{\qq}{2}{\d};
\modulo{\qq}{2}{\t}

\ifnum \t = 1
{	
	\foreach \j in {0,...,\pp}
	{
		\modulo{\j-\d-1+\p}{\p}{\jj}
		\modulo{\jj+1}{\p}{\jjp}
		\modulo{\j-1+\p}{\p}{\jm}
		\draw [thick, opacity = 0.3] (n'-0-\j) .. controls (n'-1-\jm) and (n'-\qqq-\jjp) .. (n'-\qq-\jj);
	}
	\foreach \j in {0,...,\pp}
	{
		\modulo{\j+\d}{\p}{\jj}
		\draw [thick] (n'-0-\j) .. controls (n'-1-\j) and (n'-\qqq-\jj) .. (n'-\qq-\jj);
	}
}
\else
{
	\foreach \j in {0,...,\pp}
	{
		\modulo{\j-\d+\p}{\p}{\jj}
		\modulo{\j-1+\p}{\p}{\jm}
		\draw [thick, opacity = 0.3] (n'-0-\j) .. controls (n'-1-\jm) and (n'-\qqq-\jj) .. (n'-\qq-\jj);
	}
	\foreach \j in {0,...,\pp}
	{
		\modulo{\j+\d-1}{\p}{\jj}
		\modulo{\jj-1+\p}{\p}{\jjj}
		\draw [thick] (n'-0-\j) .. controls (n'-1-\j) and (n'-\qqq-\jjj) .. (n'-\qq-\jj);
	}	
}
\fi
\end{scope}

\begin{scope}
\clip (0,0) rectangle (17, 8.5);
\draw [thick] (n'-\qq-\j) .. controls (co'-\j+) and ($(90:1.5)+(7,0)$) .. (7,0) .. controls +(90:-1.5) and (co'-\jj-) .. (n'-\qq-\jj);
\end{scope}
\begin{scope}
\clip (0,0) rectangle (17, -8.5);
\draw [thick, opacity = 0.3] (n'-\qq-\j) .. controls (co'-\j+) and ($(90:1.5)+(7,0)$) .. (7,0) .. controls +(90:-1.5) and (co'-\jj-) .. (n'-\qq-\jj);
\end{scope}

\path (n-\qq-0) .. controls (co-0+) and (co'-1-) .. (n'-\qq-1) node (mlow) [pos = 0.5] {};
\path (n-\qq-1) .. controls (co-1-) and (co'-0+) .. (n'-\qq-0) node (mup) [pos = 0.5] {};
\coordinate (cmlow) at (mlow);
\coordinate (cmup) at (mup);


\filldraw [purple!50!red, fill opacity = 0.2, even odd rule] ($(17,0)+(160:7.72)$) arc (160:-160:7.72) .. controls + (110:1) and \control{(7,0)}{(-90:1.75)} .. controls +(90:1.5) and \control{($(17,0)+(160:7.72)$)}{(-110:1)} (17,0) circle (1.23);
\filldraw [blue, fill opacity = 0.2, even odd rule] (0,0) circle (7.72) (0,0) circle (1.23);

\filldraw [thick, green!50!black,fill opacity = 0.1] (0,0) circle (9.5cm);
\draw [green!50!black, opacity = 0.4] (30:9.5) .. controls + (-60:2.5) and \control{(-150:9.5)}{(-60:2.5)};
\draw [green!50!black, dotted, opacity = 0.4] (30:9.5) .. controls + (120:2.5) and \control{(-150:9.5)}{(120:2.5)};

\path [name path = left] (0,0) circle (9.5cm);
\path [name path = right] ($(17,0)+(160:7.72)$) arc (160:-160:7.72) .. controls + (110:1) and \control{(7,0)}{(-90:1.75)} .. controls +(90:1.5) and \control{($(17,0)+(160:7.72)$)}{(-110:1)};
\path [name intersections={of=right and left,total=\tot}]
\foreach \s in {1,...,\tot}{coordinate (i\s) at (intersection-\s)};
\draw [green!50!black, opacity = 0.8, dashed, thick] (i1) .. controls + (85:0.5) and \control{(i2)}{(-85:0.5)};
\draw [green!50!black, opacity = 0.4, dashed, thick] (i1) .. controls + (120:1.25) and \control{(i2)}{(-120:1.25)};

\node [thick,  green!50!black] at (35:10.2cm) {$S_0$};
\node at ($(-135:8.5)+(17,0)$) {$M^2$};
\node at ($(-45:8.5)$) {$M^1$};
\end{tikzpicture}
\caption{Link components $M^1$ and $M^2$ embedded on tori, and sphere $S_0$.}
\label{pic_link_embed}
\end{center}
\end{figure}
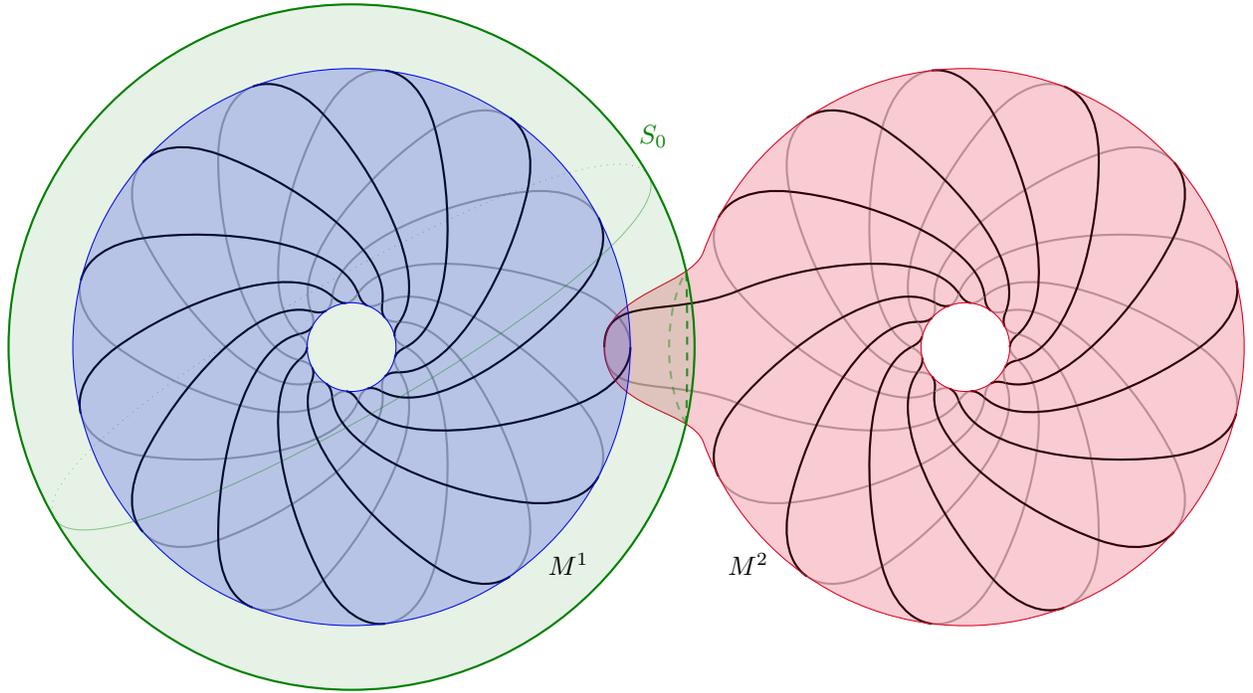

\subsection{Proof of \Cref{th_unlink_analysed}}\label{sec_proof}

In order to prove \Cref{th_unlink_analysed}, we assume that $\addcross(\mathcal{D}(n,n+1), R)< k = \frac{2}{3}n-2$. By virtue of \Cref{lem_inter_sphere}, it follows that $|M^i_t \cap S_t| \leq |M_t \cap S_t| < \frac{2}{3}n$ for $i \in \{1,2\}$. Now, changes to $|M_t \cap S_t|$ can only happen at critical times where the number of crossings involving $\mathcal{U}_t$ in $\mathcal{D}_t$ increases or decreases. In particular, we can ignore RIII moves between $\mathcal{U}_t$ and $\mathcal{M}_t$. Since we only consider moves involving both $\mathcal{U}_t$ and $\mathcal{M}_t$, only RII moves are relevant for our study. We denote the critical times of RII moves involving both $\mathcal{U}_t$ and $\mathcal{M}_t$ by $(t_j)_{1\leq j \leq s} \subset [0,1]$. These times are the only times where $S_t$ is not transverse to $M_t$, but \textbf{finitely tangent}, meaning that $M_t$ is tangent to $S_t$, but their number of intersections is still finite.

Recall that, up to applying $\phi'^{-1}$, we assume that for all $t \in [0,1]$ $M_t$ is fixed; we denote it by $M$. By \ref{def_T2}, for all times $t \in [0,1] \smallsetminus \{t_j\}_{1 \leq j \leq s}$, $S_t$ has a small side corresponding to a $3$-ball $B_t^i \in \T^i$. We study how these small sides evolve throughout the sweep-out. Special attention must be paid to tangencies at times $\{t_j\}_{1 \leq j \leq s}$, where small sides are not defined. For convenience, we denote by $\Theta: \Sp^3 \times [0,1] \to \Sp^3$ an isotopy describing the evolution of $S_t$, i.e., such that $\Theta(S_0,t)=S_t$.

\medskip

We start by defining the \textbf{agreement} between bubble tangles $\T^1$ and $\T^2$ as a map $a : [0,1] \smallsetminus \{t_j\}_{1 \leq j \leq s} \rightarrow \{0,1\}$, such that $a(t) = 1$ if $B_t^1 = B_t^2$ and $0$ otherwise. With this definition, and under the assumption that $\addcross(\mathcal{D}(n,n+1), R)< k$, we can infer that $a(0) = 0$ and $a(1) =1$: first, note that $a(1)=1$. Indeed, $\mathcal{U}_1$ is disjoint from $\mathcal{M}$ so that $S_1$ is disjoint from $M$. It follows that one side of $S_1$ is an empty ball with respect to both $M^1$ and $M^2$ and thus $a(1)=1$ by \ref{def_T4}. Second, the small side of $S_0$ in $\T^1$ is, by \ref{def_T4}, the ball not containing $M^1$. For $\T^2$, as it is illustrated by \Cref{pic_link_embed}, $S_0$ contains an $M^2$-trivial ball and $M^1$ on its small side, and the remaining of $M^2$ on the other side. It follows that the small sides disagree at time $0$, and hence $a(0)=0$.

We want to show that $a$ stays constant on its domain of definition. For this, we introduce the following definition: Two disjoint spheres $S, S' \subset \Sp^3$ are \textbf{braid-equivalent} with respect to some link $L \subset \Sp^3$, if $L$ forms a braid in the product region between $S$ and $S'$, or, equivalently, if the product region between $S$ and $S'$ is homeomorphic to $S_\ell \times [0,1]$ where $S_\ell$ is the $2$-sphere with $\ell$ holes, $\ell \geq 0$, see \Cref{pic_braid_equi} for an illustration.
 
\begin{figure}[ht]
\begin{center}
\begin{tikzpicture}
\begin{scope}[red!50!purple]
\draw (0,0) circle (1.15);
\draw (0,0) circle (1.5);
\draw (0,0) circle (1.85);
\end{scope}
\draw (170:2) -- (160:1.6) -- (150:1);
\draw (115:2) -- (125:1.6) -- (125:1.3) -- (115:1);
\draw (135:2) -- (135:1);
\draw (5:0.95) -- (0:1.3) -- (-10:1.25) -- (-15:0.95);
\end{tikzpicture}
\caption{A cross-section view of three spheres and a link. The two outermost spheres are braid-equivalent, but the innermost one is not braid-equivalent to the others.}
\label{pic_braid_equi}
\end{center}
\end{figure}
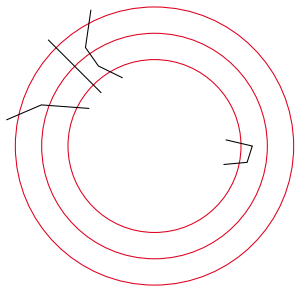

We have the following statement for two disjoint braid-equivalent spheres. 

\begin{lemma}[\cite{Lunel_spherewidth_out}, Lemma 3.2]\label{lem_braid_equi}
Let $\T$ be a bubble tangle and $S,S' \subset \Sp^3$ be two braid-equivalent spheres. Define $\Sp^3 \smallsetminus S = \{B_1,B_2\}$ and $\Sp^3 \smallsetminus S' = \{B'_1,B'_2\}$ such that $B_1 \subset B'_1$. If $B_1 \in \T$ then $B'_1 \in \T$.
\end{lemma}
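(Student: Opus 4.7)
The plan is to sweep $S$ to $S'$ monotonically through the braid-equivalent region between them and to propagate the property ``inner ball lies in $\T$'' one elementary step at a time, using the double bubble axiom \ref{def_T3}. Since $S$ and $S'$ are braid-equivalent, there is a monotone isotopy $(S_t)_{t \in [0,1]}$ of spheres in $\Sp^3$ with $S_0 = S$, $S_1 = S'$, each transverse to $L$, and $|S_t \cap L| = \ell$ for every $t$, obtained by using the slices of the product structure on the region between $S$ and $S'$. Applying \ref{def_T1} to $B_1$ gives $\ell < k$, so \ref{def_T2} singles out, for each $t$, a distinguished ball $A_t$ bounded by $S_t$; I choose it on the $B_1$ side, so that $A_0 = B_1$, $A_1 = B'_1$, and $A_t \subset A_{t'}$ whenever $t \leq t'$. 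The goal then reduces to showing $A_t \in \T$ for every $t$.

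I would next discretize the isotopy into finitely many elementary pushes $S_{t_0} \to S_{t_1} \to \cdots \to S_{t_N}$, where $S_{t_i}$ and $S_{t_{i+1}}$ coincide outside a small disc and the closed region $B_0^{(i)} := A_{t_{i+1}} \setminus \mathrm{int}(A_{t_i})$ swept between them is a $3$-ball meeting $L$ either in $\emptyset$ or in a single unknotted arc. This is a standard general-position / partition-of-unity argument in the product region, exploiting the fact that the strands of $L$ are parallel in the braid structure. In particular $B_0^{(i)}$ is $L$-trivial with $|\partial B_0^{(i)} \cap L| \leq 2 < k$, so \ref{def_T4} gives $B_0^{(i)} \in \T$.

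The induction is now direct. Suppose $A_{t_i} \in \T$, the case $i=0$ being the hypothesis. Observe that $S_{t_i}$ and $S_{t_{i+1}}$ share the disc $D_c := S_{t_i} \cap S_{t_{i+1}}$, and together with the two perturbed discs $D_i := S_{t_i} \setminus D_c$ and $D_{i+1} := S_{t_{i+1}} \setminus D_c$ (all meeting at the circle $\partial D_c$) they form a double bubble whose three balls are precisely $A_{t_i}$, $B_0^{(i)}$ and $\overline{\Sp^3 \setminus A_{t_{i+1}}}$. By the inductive hypothesis and the previous paragraph, the first two of these balls lie in $\T$, so \ref{def_T3} forces $\overline{\Sp^3 \setminus A_{t_{i+1}}} \notin \T$; then \ref{def_T2} applied to $S_{t_{i+1}}$ (which still has $\ell < k$ intersections with $L$) concludes $A_{t_{i+1}} \in \T$. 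Iterating to $i = N$ yields $B'_1 = A_1 \in \T$, which is the desired conclusion. The most delicate step is the discretization: one must arrange each elementary push so that the resulting bubble $B_0^{(i)}$ is a genuine $3$-ball, is $L$-trivial, and meets $L$ in at most two points on its boundary, so that \ref{def_T4} actually applies, while the whole double bubble stays transverse to $L$; this is routine but is the step requiring care.
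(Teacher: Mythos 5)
This lemma is not proved in the paper at all: it is imported verbatim from \cite{Lunel_spherewidth_out} (their Lemma~3.2), so there is no in-paper argument to compare yours against. Judged on its own merits, your skeleton is sound and is the natural one: sweep monotonically through the product region, and at each elementary step apply \ref{def_T4} to the swept ball, \ref{def_T3} to the double bubble formed by the two consecutive nested spheres, and \ref{def_T2} to conclude that the inner side of the new sphere lies in $\T$. Note in your favour that once consecutive spheres are nested and coincide outside a disc, the swept region is automatically a ball by PL Schoenflies, so the only thing to arrange is its intersection with $L$ and the disc-intersection pattern needed for \ref{def_T3}.

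Two points in the delicate step need repair, though neither seems fatal. First, as written the discretization contradicts your own set-up: the slices $S_t$ of the product structure are pairwise disjoint, so consecutive ones never ``coincide outside a small disc''. The spheres in your elementary-push sequence are therefore not slices but staircase-type interpolations obtained by pushing across one small brick of the product region at a time, and you must then check separately that each such intermediate sphere still meets $L$ in fewer than $k$ points; this is true, but it requires the observation that the interpolating spheres can be kept monotone with respect to the product coordinate, so that each (vertical) braid strand crosses each of them exactly once, giving exactly $\ell$ intersections -- it does not follow from the constancy of $|S_t\cap L|$ along the original slices. Relatedly, to make $S_{t_i}\cap S_{t_{i+1}}$ a single disc (so that the configuration is a genuine double bubble for \ref{def_T3}) the bricks must be added in a shelling-type order; this should be stated. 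Second, the inequality $|\partial B_0^{(i)}\cap L| \le 2 < k$ fails when $k=2$, which \Cref{def:bubbletangle} allows; in that case, however, $\ell$ is even and $\ell<k$ forces $\ell=0$, so the product region is disjoint from $L$ and the bound becomes $0<k$. With these remarks added, your induction goes through and gives a complete proof of the cited statement.
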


The compression bubble tangles furthermore have the following property.

\begin{lemma}\label{lem_bubble_tangle_monot}
Let $\T$ be a compression bubble tangle of $L$, induced by an embedding of $L$ into some surface $\Sigma \subset \Sp^3$, and let $k$ be the order of $\T$. Then, for two closed $3$-balls $A,B \subset \Sp^3$, if $A \in \T$, $B \subset A$, and $|\partial B \cap L| < k$, then $B \in \T$. That is, $\T$ is \emph{stable by inclusion} up to \ref{def_T1}.
\end{lemma}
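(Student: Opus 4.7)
The plan is to invoke directly the explicit characterisation of compression bubble tangles given in the discussion immediately preceding the lemma. Recall that, for such a tangle $\T$ associated to an embedding $L \subset \Sigma \subset \Sp^3$, a $3$-ball $C$ with $|\partial C \cap L| < k$ belongs to $\T$ precisely when its intersection with the surface is trivial, i.e.\ when the inclusion $C \cap \Sigma \hookrightarrow \Sigma$ induces the trivial map on $\pi_1$ (with the empty case $C \cap \Sigma = \emptyset$ treated as trivial). Once this is granted, the lemma reduces to a purely topological observation about subsurfaces of $\Sigma$.

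First, since $A \in \T$, the characterisation tells me that the inclusion $A \cap \Sigma \hookrightarrow \Sigma$ is $\pi_1$-trivial. The hypothesis $B \subset A$ gives $B \cap \Sigma \subseteq A \cap \Sigma$, so the inclusion of the former into $\Sigma$ factors as
\[
B \cap \Sigma \hookrightarrow A \cap \Sigma \hookrightarrow \Sigma.
\]
Since the second map is trivial on fundamental groups, so is the composition. Applying this componentwise on $B \cap \Sigma$ and $A \cap \Sigma$ (to avoid basepoint issues, since these subsurfaces may be disconnected) shows that the inclusion $B \cap \Sigma \hookrightarrow \Sigma$ is itself $\pi_1$-trivial.

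Combining this with the additional hypothesis $|\partial B \cap L| < k$, which is precisely \ref{def_T1} for $B$, and feeding it back into the characterisation of $\T$ yields $B \in \T$, as required. The reason the statement is only ``stable by inclusion up to \ref{def_T1}'' is then transparent: the inclusion $B \subset A$ alone gives no control on how $\partial B$ meets $L$ (the boundary of $B$ can wind around $L$ in more complicated ways than $\partial A$), so the size bound must be imposed by hand. I do not expect any real obstacle in carrying this out; the whole content of the argument is contained in the characterisation of compression bubble tangles, and the lemma itself is essentially a formal consequence of the functoriality of $\pi_1$ applied to nested subsurfaces.
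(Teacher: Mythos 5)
Your proposal is correct and follows essentially the same route as the paper: both arguments use the characterisation of compression bubble tangles via $\pi_1$-trivial intersection with $\Sigma$, and both derive $\pi_1$-triviality of $B \cap \Sigma \hookrightarrow \Sigma$ from the factorisation through $A \cap \Sigma$, whose inclusion is $\pi_1$-trivial. Your componentwise remark about basepoints is a minor refinement the paper leaves implicit, but it does not change the argument.
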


\begin{proof}
Let $A$ and $B$ be two closed balls of $\Sp^3$ such that $|\partial A \cap L|, |\partial B \cap L| < k$, and $A \in \T$. By definition, $A \cap \Sigma$ is $\pi_1$-trivial. Since $B \subset A $, we have $B \cap \Sigma \subset A \cap \Sigma$ so that the inclusion morphisms $i_*$ satisfy $\pi_1 (B \cap \Sigma) \rightarrow \pi_1 (A \cap \Sigma) \rightarrow 0$. Hence, $B \cap \Sigma$ is $\pi_1$-trivial and $B \in \T$.
\end{proof}

To handle the back-and-forths of our sweep-out, we introduce a new equivalence relation on spheres. Two intersecting spheres $S,S' \subset \Sp^3$ are \textbf{intersection-equivalent} if there exists an isotopy between them which stays constant on their intersection $S \cap S'$, see \Cref{pic_intersection_equivalent}. Note that, by this definition, a sphere $S$ is intersection-equivalent with itself. 

The structure of the remainder of the proof is to show that the agreement $a$ is constant on $[0,1]$ by showing that it is locally constant via Lemmas~\ref{lem_gen_orient_local} and~\ref{lem_gen_orient_local_bis}.

\begin{figure}[ht]
\begin{center}
\begin{tikzpicture}[scale=1.5]
\clip (-2,-1.6) rectangle (7,2.25);
\fill[opacity = 0.2, blue] (0,0) circle (1.5cm);
\fill [opacity = 0.2, purple] (-130:1.5) .. controls +(-145:1) and \control{(-170:1.5)}{(-170:1)} .. controls +(10:1) and \control{(150:1.5)}{(-20:1)} node (n1) [pos = 0.15] {} node (n2)[pos = 0.85] {} .. controls +(160:1.5) and \control{(110:1.5)}{(100:1.5)} .. controls +(-80:1.5) and \control{(45:1.5)}{(-120:1.5)} node (n3) [pos = 0.15] {} node (n4)[pos = 0.75] {} .. controls +(60:1.75) and \control{(-45:1.5)}{(-30:1.75)} node (n5) [pos = 0.3] {} node (n6)[pos = 0.75] {}  .. controls +(150:1.25) and \control{(-130:1.5)}{(50:1.25)} node (n7) [pos = 0.15] {} node (n8)[pos = 0.85] {} ;
\foreach \i in {1,...,8}{\coordinate (c\i) at (n\i);};

\begin{scope}
\clip (c1) -- (c2) -- (c3) -- (c4) -- (c5) -- (c6) -- (c7) -- (c8) --cycle;
\draw [purple, dotted] (-130:1.5) .. controls +(-145:1) and \control{(-170:1.5)}{(-170:1)} .. controls +(10:1) and \control{(150:1.5)}{(-20:1)} .. controls +(160:1.5) and \control{(110:1.5)}{(100:1.5)} .. controls +(-80:1.5) and \control{(45:1.5)}{(-120:1.5)} .. controls +(60:1.75) and \control{(-45:1.5)}{(-30:1.75)} .. controls +(150:1.25) and \control{(-130:1.5)}{(50:1.25)};
\end{scope}

\begin{scope}[even odd rule]
\clip (-2,-2) rectangle (2,2.2) (c1) -- (c2) -- (c3) -- (c4) -- (c7) -- (c8) --cycle;
\draw [purple] (-130:1.5) .. controls +(-145:1) and \control{(-170:1.5)}{(-170:1)} .. controls +(10:1) and \control{(150:1.5)}{(-20:1)} .. controls +(160:1.5) and \control{(110:1.5)}{(100:1.5)} .. controls +(-80:1.5) and \control{(45:1.5)}{(-120:1.5)} .. controls +(60:1.75) and \control{(-45:1.5)}{(-30:1.75)} .. controls +(150:1.25) and \control{(-130:1.5)}{(50:1.25)};
\end{scope}

\begin{scope}[dashed, purple]
\draw (c2) .. controls +(160:0.25) and \control{(c3)}{(100:0.25)}; 
\draw (c4) .. controls +(45:0.75) and \control{(c7)}{(-40:0.75)}; 
\draw (c8) .. controls +(-130:0.35) and \control{(c1)}{(-160:0.35)}; 

\draw (c2) .. controls +(-40:0.25) and \control{(c3)}{(-80:0.25)}; 
\draw (c4) .. controls +(-140:0.35) and \control{(c7)}{(150:0.35)}; 
\draw (c8) .. controls +(30:0.25) and \control{(c1)}{(0:0.25)}; 
\end{scope}

\begin{scope}
\clip (-130:1) -- (-130:2) -- (-170:2) -- (-170:1) -- (150:1) -- (150:2) -- (110:2) -- (110:1) -- (45:1) -- (45:2.5) -- (-45:2.5) -- (-45:1) -- cycle;
\draw [dotted, blue] (0,0) circle (1.5cm);
\end{scope}
\begin{scope}[even odd rule]
\clip (-2,-2) rectangle (2,2.2) (-130:1) -- (-130:2) -- (-170:2) -- (-170:1) -- (150:1) -- (150:2) -- (110:2) -- (110:1) -- (45:1) -- (45:2.5) -- (-45:2.5) -- (-45:1) -- cycle;
\draw [blue] (0,0) circle (1.5cm);
\end{scope}

\begin{scope}[xshift=5cm]
\fill[opacity = 0.2, blue] (0,0) circle (1.5cm);
\fill [opacity = 0.2, purple] (-130:1.5) .. controls +(-145:1) and \control{(-170:1.5)}{(-170:1)} .. controls +(10:1) and \control{(150:1.5)}{(-20:1)} node (n1) [pos = 0.15] {} node (n2)[pos = 0.85] {} .. controls +(160:1.5) and \control{(110:1.5)}{(100:1.5)} .. controls +(-80:1.5) and \control{(45:1.5)}{(-120:1.5)}node (n3) [pos = 0.15] {} node (n4)[pos = 0.75] {} .. controls +(60:0.45) and \control{(70:1.5)}{(80:0.5)} .. controls +(-100:0.55) and \control{(95:1.5)}{(-110:0.55)} node (n5) [pos = 0.15] {} node (n6) [pos = 0.85] {} .. controls +(70:3) and \control{(-45:1.5)}{(-30:2.5)} .. controls +(150:1.25) and \control{(-130:1.5)}{(50:1.25)} node (n7) [pos = 0.15] {} node (n8)[pos = 0.85] {};
\foreach \i in {1,...,8}{\coordinate (c\i) at (n\i);};

\begin{scope}
\clip (c1) -- (c2) -- (c3) -- (c6) -- (c5) -- (c7) -- (c8) --cycle;
\draw [purple, dotted] (-130:1.5) .. controls +(-145:1) and \control{(-170:1.5)}{(-170:1)} .. controls +(10:1) and \control{(150:1.5)}{(-20:1)} .. controls +(160:1.5) and \control{(110:1.5)}{(100:1.5)} .. controls +(-80:1.5) and \control{(45:1.5)}{(-120:1.5)} .. controls +(60:0.45) and \control{(70:1.5)}{(80:0.5)} .. controls +(-100:0.55) and \control{(95:1.5)}{(-110:0.55)} .. controls +(70:3) and \control{(-45:1.5)}{(-30:2.5)} .. controls +(150:1.25) and \control{(-130:1.5)}{(50:1.25)};
\end{scope}

\begin{scope}[even odd rule]
\clip (-2,-2) rectangle (2,2.3) (c1) -- (c2) -- (c3) -- (c6) -- (c5) -- (c7) -- (c8) --cycle;
\draw [purple] (-130:1.5) .. controls +(-145:1) and \control{(-170:1.5)}{(-170:1)} .. controls +(10:1) and \control{(150:1.5)}{(-20:1)} .. controls +(160:1.5) and \control{(110:1.5)}{(100:1.5)} .. controls +(-80:1.5) and \control{(45:1.5)}{(-120:1.5)} .. controls +(60:0.45) and \control{(70:1.5)}{(80:0.5)} .. controls +(-100:0.55) and \control{(95:1.5)}{(-110:0.55)} .. controls +(70:3) and \control{(-45:1.5)}{(-30:2.5)} .. controls +(150:1.25) and \control{(-130:1.5)}{(50:1.25)};
\end{scope}

\begin{scope}[dashed, purple]
\draw (c2) .. controls +(160:0.25) and \control{(c3)}{(100:0.25)}; 
\draw (c4) .. controls +(45:0.75) and \control{(c7)}{(-40:0.75)}; 
\draw (c8) .. controls +(-130:0.35) and \control{(c1)}{(-160:0.35)}; 
\draw (c2) .. controls +(-40:0.25) and \control{(c3)}{(-80:0.25)}; 
\draw (c4) .. controls +(-140:0.35) and \control{(c7)}{(150:0.35)}; 
\draw (c8) .. controls +(30:0.25) and \control{(c1)}{(0:0.25)}; 
\draw (c6) .. controls +(90:0.15) and \control{(c5)}{(75:0.15)};
\draw (c6) .. controls +(-90:0.1) and \control{(c5)}{(-105:0.1)};  
\end{scope}

\begin{scope}
\clip (-130:1) -- (-130:2) -- (-170:2) -- (-170:1) -- (150:1) -- (150:2) -- (110:2) -- (110:1) -- (95:1) -- (95:2) -- (70:2) -- (70:1) -- (45:1) -- (45:2.5) -- (-45:2.5) -- (-45:1) -- cycle;
\draw [dotted, blue] (0,0) circle (1.5cm);
\end{scope}
\begin{scope}[even odd rule]
\clip (-2,-2) rectangle (2,2.2) (-130:1) -- (-130:2) -- (-170:2) -- (-170:1) -- (150:1) -- (150:2) -- (110:2) -- (110:1) -- (95:1) -- (95:2) -- (70:2) -- (70:1) -- (45:1) -- (45:2.5) -- (-45:2.5) -- (-45:1) -- cycle;
\draw [blue] (0,0) circle (1.5cm);
\end{scope}
\end{scope}
\end{tikzpicture}
\caption{Left: two intersection-equivalent spheres. Right: two spheres that are not intersection-equivalent: the red sphere has an annulus component that cannot be mapped to a component of the blue one. Indeed, the components of the blue sphere are discs and a sphere with $4$ punctures.}
\label{pic_intersection_equivalent}
\end{center}
\end{figure}
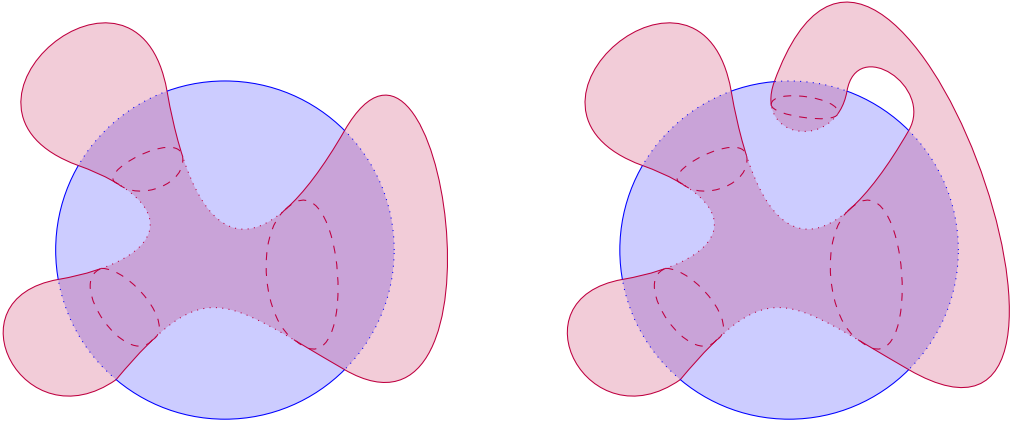 

Since we work in the piecewise-linear setting, we have the following observation:

\begin{lemma}\label{lem_gen_close_enough}
Let $t \in [0,1]$. There exists a neighbourhood $V \subset [0,1]$ of $t$ such that for all spheres $S_v$, $v \in V$, $S_v$ and $S_t$ are either disjoint or intersection-equivalent.
\end{lemma}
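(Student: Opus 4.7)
My plan is to exploit the PL structure of the isotopy $h$ coming from \Cref{prop_iso_U}. Being PL with respect to a fixed triangulation, $h$ admits only finitely many \emph{combinatorial critical times} at which the combinatorial type of $\mathcal{U}_v$ (and its position with respect to a fixed polyhedral structure on $\R^2$) changes. For the given time $t$, I would take $V$ to be a small open interval around $t$ that avoids all other such critical times, so that for $v \in V$ the curve $\mathcal{U}_v$ is obtained from $\mathcal{U}_t$ by a uniformly small linear motion of its vertices on each side of $t$. After possibly shrinking $V$ and applying a small generic perturbation to the polar caps used to construct the spheres so that distinct spheres $S_v$ use distinct pairs of points at infinity, two cases arise: either $\mathcal{U}_v \cap \mathcal{U}_t = \emptyset$, in which case $S_v$ and $S_t$ are disjoint in $\Sp^3$; or $\mathcal{U}_v$ meets $\mathcal{U}_t$ transversely in finitely many points $p_1, \ldots, p_n$, and $S_t \cap S_v$ is a PL $1$-complex consisting of $n$ circles, one passing through each $p_i$.

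In the intersecting case, to establish intersection-equivalence, I would invoke the PL isotopy extension theorem: the small motion from $\mathcal{U}_t$ to $\mathcal{U}_v$ extends to an ambient PL isotopy of $\R^2$ supported in a small tubular neighbourhood $N$ of $\mathcal{U}_t$. Extending this isotopy vertically and capping off at infinity yields an ambient PL isotopy of $\Sp^3$ that takes $S_t$ to $S_v$ and is supported in a thin tubular neighbourhood $\widetilde{N}$ of $S_t$. The circles of $S_t \cap S_v$ cut both spheres into matching collections of discs, and each pair of corresponding discs bounds a small PL $3$-ball $B_i$ sitting inside $\widetilde{N}$. I would then upgrade the ambient isotopy to one fixing $S_t \cap S_v$ by, inside each $B_i$ separately, replacing the ambient motion by a standard local PL isotopy that pushes the disc of $S_t$ across $B_i$ onto the matching disc of $S_v$ while fixing the boundary circle $\partial B_i$. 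Since for $V$ small enough the balls $B_i$ are pairwise disjoint, these local modifications can be glued together into a global ambient PL isotopy of $\Sp^3$ taking $S_t$ to $S_v$ and fixing $S_t \cap S_v$ pointwise.

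The main obstacle is ensuring that each of the $3$-balls $B_i$ is unknotted, i.e., a genuine PL $3$-ball rather than a more complex handle-like region. This follows from the smallness provided by the PL isotopy extension theorem: for a sufficiently small perturbation, the ambient isotopy is supported in a thin tubular neighbourhood of $S_t$, so each region bounded by a pair of matching discs is a small PL $3$-ball and is automatically unknotted. A minor additional point to watch is the behaviour at the critical times $\{t_j\}$, where $S_t$ becomes finitely tangent to $M$; but since the statement concerns only $S_t$ and $S_v$ and not $M$, these tangencies do not interfere, and the argument above applies by taking $V$ on both sides of $t$.
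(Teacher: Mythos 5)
First, note that the paper does not actually prove this lemma: it is stated as an ``observation'' justified only by the remark that everything is PL, so your task was to supply an argument the authors left implicit. The attempt as written, however, has a genuine gap: the dichotomy ``either $\mathcal{U}_v\cap\mathcal{U}_t=\emptyset$ or $\mathcal{U}_v$ meets $\mathcal{U}_t$ transversely in finitely many points'' is not exhaustive, and it fails precisely in the typical case. The isotopy $h$ produced by \Cref{prop_iso_U} is pieced together from local (Reidemeister-type) modifications, so on a small time interval it generally moves only part of the curve; hence for $v$ arbitrarily close to $t$ the curves $\mathcal{U}_t$ and $\mathcal{U}_v$ share entire arcs, and may even coincide (already at $v=t$ your dichotomy fails, since $S_t$ is neither disjoint from nor transverse to itself). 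In these cases $S_t\cap S_v$ has $2$-dimensional components, which is exactly why the paper's notion of intersection-equivalence is stated for arbitrary intersections and explicitly records that a sphere is intersection-equivalent to itself. Your genericity step cannot rescue this: perturbing only the polar caps does nothing to the vertical parts of the spheres lying over the shared arcs, you are not free to perturb the curves $\mathcal{U}_v$ themselves (the family $S_v$ is prescribed and its intersection pattern with $M$ must be preserved for the later arguments), and no single small perturbation can make $S_v$ transverse to $S_t$ simultaneously for all $v$ in a neighbourhood, because $S_v\to S_t$ as $v\to t$. So the case analysis misses the main case, and the argument as written does not prove the lemma.

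Two further inaccuracies in the transverse case are worth flagging, though they are more easily repaired. Each transverse point $p_i$ of the plane curves contributes a vertical line to $S_t\cap S_v$, and these lines are joined in pairs through the cap regions, so the intersection circles contain (at least) two of the $p_i$ each, not one. More importantly, cutting two spheres along several intersection circles produces planar pieces that are in general not discs (annuli and worse), so ``each pair of corresponding discs bounds a small PL $3$-ball'' is not literally correct; the right statement is that, for $v$ in the same linear cell of the PL isotopy, $S_v$ lies in a thin product neighbourhood $S_t\times[-\varepsilon,\varepsilon]$ and can be pushed onto $S_t$ fibrewise (a straight-line push in the $[-\varepsilon,\varepsilon]$ direction), which fixes $S_t\cap S_v$ pointwise without any ball-by-ball surgery. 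A corrected write-up along your general lines would choose $V$ inside one linear cell of $h$, treat the locus where $\mathcal{U}_v$ and $\mathcal{U}_t$ agree by the identity, and push the remaining graph-like pieces across the product region; this handles overlapping intersections, the transverse points, and the degenerate case $S_v=S_t$ uniformly, and is presumably the kind of PL argument the authors had in mind when asserting the lemma without proof.
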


\begin{lemma}\label{lem_gen_orient_local}
Let $t \in [0,1] \smallsetminus \{t_j\}_{1 \leq j \leq s}$ be a non-critical time. There exists a neighbourhood $V \subset [0,1] \smallsetminus \{t_j\}_{1 \leq j \leq s}$ of $t$ such that $a$ is constant on $V$.
\end{lemma}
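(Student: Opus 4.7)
My plan is to narrow down a neighbourhood of $t$ in two stages. First, by \Cref{lem_gen_close_enough}, I would choose a neighbourhood $V_0$ of $t$ on which every $S_v$ is either disjoint from, or intersection-equivalent to, $S_t$, and then delete the finitely many critical times to obtain a neighbourhood $V \subset V_0 \smallsetminus \{t_j\}_{1 \leq j \leq s}$. Because no critical time lies in $V$, no RII move between $\mathcal{U}$ and $\mathcal{M}$ occurs there, so the restriction of the sweep $\Theta$ to $V$ is transverse to $M$ and $|S_v \cap M| = |S_t \cap M| < k$ for all $v \in V$; in particular the small sides $B_v^1$ and $B_v^2$ are well defined. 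It then suffices to show that for each such $v$, the correspondence $B_t^i \mapsto B_v^i$ does not depend on $i \in \{1,2\}$.

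The core of the argument is the disjoint case: assume $S_v \cap S_t = \emptyset$, so that $\Sp^3 = B_t \sqcup P \sqcup B_v$ with $P$ a product region. Since no RII move happens in $V$, $M$ crosses $P$ in vertical arcs, and $S_t$, $S_v$ are braid-equivalent with respect to each $M^i$. For $i \in \{1,2\}$, the small side $B_t^i$ is either $B_t$ or $P \cup B_v$. In the first sub-case, \Cref{lem_braid_equi} applied to the inclusion $B_t \subset P \cup B_t$ forces $B_v^i = P \cup B_t$. In the second sub-case, $B_v \subset B_t^i$ with $|\partial B_v \cap M^i| < k$, so the compression bubble-tangle stability of \Cref{lem_bubble_tangle_monot} forces $B_v^i = B_v$. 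In either sub-case, $B_v^i$ is the side of $S_v$ lying on the same side of $P$ as $B_t^i$; this recipe is independent of $i$, and hence $a(v) = a(t)$.

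I would handle the intersection-equivalent case by reducing to the disjoint one, inserting an auxiliary sphere $S^\star$ disjoint from both $S_t$ and $S_v$, transverse to $M$ with $|S^\star \cap M| = |S_t \cap M| < k$, and braid-equivalent to each of $S_t$ and $S_v$. For $V$ small enough, the PL isotopy from $S_t$ to $S_v$ can be confined to an arbitrarily thin bicollar $N \cong S_t \times (-\epsilon, \epsilon)$ of $S_t$ that contains $S_v$; I would then take $S^\star = S_t \times \{\epsilon\}$, which lies just outside $S_v$ on one side of $S_t$. Applying the disjoint case twice, to the pairs $(S_t, S^\star)$ and $(S^\star, S_v)$, then yields $a(t) = a(S^\star) = a(v)$.

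The main obstacle lies in this second case: one must verify that the push-off $S^\star$ simultaneously misses both $S_t$ and $S_v$, even though $S_t$ and $S_v$ themselves intersect. The key is to choose $V$ small enough that the Hausdorff distance between $S_t$ and $S_v$ is much smaller than the width of the bicollar $N$, so that $S_v \subset N$ while $S^\star = \partial^+ N$ lies outside it; the preservation of the intersection count with $M$ and the braid-equivalence of $S^\star$ with each of $S_t$ and $S_v$ then follow from standard PL general position arguments inside $N$.
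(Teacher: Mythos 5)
Your treatment of the disjoint case is the same as the paper's Case~1: split according to whether the small side of $S_t$ faces away from or towards $S_v$, and use \Cref{lem_braid_equi} in the first sub-case and \Cref{lem_bubble_tangle_monot} (plus the uniqueness in \ref{def_T2}) in the second, concluding that the small sides of $S_v$ sit on the same side as those of $S_t$ for both tangles simultaneously. Where you genuinely diverge is the intersection-equivalent case. The paper constructs, for each tangle $\T^i$ separately, a hybrid sphere obtained by exchanging the pieces of $S_v$ lying inside $B^i_t$ with the corresponding pieces of $S_t$ (via the injection $\psi$ given by intersection-equivalence), applies \Cref{lem_bubble_tangle_monot} to it, then pushes it off $S_v$ and invokes \Cref{lem_braid_equi}. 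You instead insert a single parallel push-off $S^\star$ of $S_t$, disjoint from both $S_t$ and $S_v$, and run the disjoint case twice. This is a legitimate and arguably cleaner scheme: it is symmetric in $\T^1,\T^2$, avoids the component-swapping bookkeeping entirely, and nothing in \Cref{lem_braid_equi}, \Cref{lem_bubble_tangle_monot} or \ref{def_T2} requires the auxiliary sphere to belong to the sweep-out family.

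The one point you should not write off as ``standard PL general position'' is the braid-equivalence of $S^\star$ with $S_v$: a priori a strand of $M$ crossing the bicollar could meet $S_v$ three times, in which case the region between $S^\star$ and $S_v$ would contain an arc of $M$ with both endpoints on $S_v$, $M$ would not form a braid there, and \Cref{lem_braid_equi} would be unavailable. This is exactly where the count equality you recorded must be used. Choose $N$ thin and adapted to $M$, so that $M \cap N$ consists of $|S_t \cap M|$ fibres, one through each point of $S_t \cap M$. For $v$ close to $t$ both balls bounded by $S_v$ contain points outside $N$, so $S_v$ separates the two ends of $N$; hence each fibre meets $S_v$ an odd number of times, and since the total equals $|S_v \cap M| = |S_t \cap M|$ (no critical time in between), each fibre meets $S_v$ exactly once. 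The region between $S^\star$ and $S_v$ is contained in $N$ and meets $M$ in one monotone sub-arc per fibre, running from $S_v$ to $S^\star$, which gives the braid structure. With this argument spelled out -- it is the same flavour of ``no critical time in between'' justification the paper itself relies on for its braid-equivalence claims -- your proof goes through.
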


\begin{proof}[Proof of \Cref{lem_gen_orient_local}]
Let $V$ be an open connected neighbourhood of $t\in [0,1] \smallsetminus \{t_j\}_{1\leq j \leq s}$ such that the spheres $S_v$ for $v \in V$ are either disjoint or intersection-equivalent to $S_t$. Such a neighbourhood is the connected component of $t$ in the neighbourhood provided by \Cref{lem_gen_close_enough} intersected with $[0,1] \smallsetminus \{t_j\}_{1\leq j \leq s}$.

\medskip

\noindent
{\bf Case 1:} $S_v \cap S_t = \emptyset$. First assume that $S_v \subset B_t^i$ for some $i \in \{1,2\}$. It follows that one of the two $3$-balls $\Sp^3 \smallsetminus S_v$ is a subset of $B^i_t$. Denote this ball by $B^i_v$. Since no critical time is contained in $V$, $|S_v \cap M| = |S_t \cap M|$. Hence, we can apply \Cref{lem_bubble_tangle_monot} to conclude that $B^i_v \in \T^i$. If this situation applies for both $i=1$ and $i=2$ and $a(t)=1$, we therefore have $a(v) = 1$.

If $S_v \subset (B_t^i)^{c}$, and we denote the component of $\Sp^3 \smallsetminus S_v$ containing $B^i_t$ by $B^i_v$, we notice that since there is no critical time between $t$ and $v$, $S_v$ and $S_t$ are braid-equivalent. Therefore, we can apply \Cref{lem_braid_equi} to conclude that $B_v^i \in \T^i$. Again, if $a(t) = 1$ then $a(v) = 1$. 

If $a(t) = 0$, combining the first argument for one of the $i$, and the second argument for the other one together imply that $a(v) = 0$.

\medskip

\noindent
{\bf Case 2:} $S_v$ and $S_t$ are intersection-equivalent. The idea of the proof is to manage components of $S_v$ on the small side of $S_t$ using \Cref{lem_bubble_tangle_monot}, and components on the big side using \Cref{lem_braid_equi}, the idea for this case is illustrated in \Cref{pic_lem_15_ex}. For this, let $C(S)$ denote the set of connected components of a topological space $S$.

Let $\mathcal{I} = C( \mathring{B^i_t} \cap S_v)$ denote the connected components of $S_v \smallsetminus S_t$ within $B^i_t$. Since $S_t$ and $S_v$ are intersection-equivalent, there is a natural injection $\psi : \mathcal{I} \hookrightarrow C (S_t \smallsetminus S_v)$ mapping $I \in \mathcal{I}$ to a connected component of $S_t \smallsetminus S_v$ to which $I$ is isotopic while keeping $S_t \cap S_v$ fixed. 
Since there is no critical time between $t$ and $v$, $\mathcal{I}$ and $\psi (\mathcal{I})$ have the same number of intersections with $M$. Hence, $S'_v = (S_v \smallsetminus \psi(\mathcal{I})) \cup \mathcal{I}$ is a sphere ($S_t$ and $S'_v$ are isotopic via an isotopy that keeps $(B^i_t)^c \cap S_v$ fixed) such that $|S'_v \cap M^i| < k$ and $S'_v \subset B^i_t$. Hence, by \Cref{lem_bubble_tangle_monot}, $A^i \in \T^i$ where $A^i$ is the side of $S'_v$ included in $B^i_t$.

It remains to handle $C( S_v \smallsetminus B^i_t )$, the connected components of $S_v \smallsetminus S_t$ outside of $B^i_t$. We slightly push $S'_v$ into $\mathring{A^i}$ making $S'_v$ disjoint from $S_v$, and once again since there is no critical time between $t$ and $v$, we know that $S'_v$ and $S_v$ are braid-equivalent. Let $B^i$ be the side of $S_v$ containing $A^i$, by \Cref{lem_braid_equi}, $B^i \in \T^i$. 

\begin{figure}[ht]
\begin{center}
\begin{tikzpicture}[scale = 1.1]
\clip (-1.5,1.70) rectangle (11.75,-1.75);
\def\aa{95} \coordinate (sa) at (\aa:1);
\def\ab{135} \coordinate (sb) at (\ab:1);
\def\ac{-155} \coordinate (sc) at (\ac:0.85);
\def\ad{-115} \coordinate (sd) at (\ad:0.85);
\def\ae{-15} \coordinate (se) at (\ae:1);
\def\af{15} \coordinate (sf) at (\af:1);

\fill [fill opacity = 0.2, red!50!purple] (sa).. controls +(\aa:1) and \control{(sb)}{(\ab:1)} .. controls +(\ab+180:0.75) and \control{(sc)}{(\ac+180:0.75)} .. controls +(\ac:1) and \control{(sd)}{(\ad:1)} .. controls +(\ad+180:0.75) and \control{(se)}{(\ae+180:0.75)} .. controls +(\ae:1.5) and \control{(sf)}{(\af:1.5)} .. controls +(\af+180:0.75) and \control{(sa)}{(\aa+180:0.75)};
\draw [dotted, thick, red!50!purple] (sb) .. controls +(\ab+180:0.75) and \control{(sc)}{(\ac+180:0.75)} (sd) .. controls +(\ad+180:0.75) and \control{(se)}{(\ae+180:0.75)} (sf) .. controls +(\af+180:0.75) and \control{(sa)}{(\aa+180:0.75)};
\draw [thick, red!50!purple] (sa).. controls +(\aa:1) and \control{(sb)}{(\ab:1)} (sc) .. controls +(\ac:1) and \control{(sd)}{(\ad:1)} (se) .. controls +(\ae:1.5) and \control{(sf)}{(\af:1.5)};

\begin{scope}[even odd rule]
\clip (-2,2) rectangle (14,-2) (sa).. controls +(\aa:1) and \control{(sb)}{(\ab:1)} .. controls +(\ab+180:0.75) and \control{(sc)}{(\ac+180:0.75)} .. controls +(\ac:1) and \control{(sd)}{(\ad:1)} .. controls +(\ad+180:0.75) and \control{(se)}{(\ae+180:0.75)} .. controls +(\ae:1) and \control{(sf)}{(\af:1)} .. controls +(\af+180:0.75) and \control{(sa)}{(\aa+180:0.75)};
\draw [blue, thick] (0,0) circle (1.25);
\end{scope}
\begin{scope}
\clip (sa).. controls +(\aa:1) and \control{(sb)}{(\ab:1)} .. controls +(\ab+180:0.75) and \control{(sc)}{(\ac+180:0.75)} .. controls +(\ac:1) and \control{(sd)}{(\ad:1)} .. controls +(\ad+180:0.75) and \control{(se)}{(\ae+180:0.75)} .. controls +(\ae:1) and \control{(sf)}{(\af:1)} .. controls +(\af+180:0.75) and \control{(sa)}{(\aa+180:0.75)};
\draw [blue, thick, dotted] (0,0) circle (1.25);
\end{scope}
\fill [blue,opacity = 0.2] (0,0) circle (1.25);

\draw [thick, dashed, blue!40!red] (sa) .. controls +(\aa+20:0.25) and \control{(sb)}{(\ab-20:0.25)} .. controls + (\ab-160:0.25) and \control{(sa)}{(\aa+160:0.25)};
\draw [thick, dashed, blue!40!red] (sc) .. controls +(\ac+20:0.25) and \control{(sd)}{(\ad-20:0.25)} .. controls + (\ad-160:0.25) and \control{(sc)}{(\ac+160:0.25)};
\draw [thick, dashed, blue!40!red] (se) .. controls +(\ae+20:0.25) and \control{(sf)}{(\af-20:0.25)} .. controls + (\af-160:0.25) and \control{(se)}{(\ae+160:0.25)};

\draw [blue, -Stealth] (45:1.25) -- +(45:0.5);

\begin{scope}[xshift = 5cm]
\def\aa{95} \coordinate (sa) at (\aa:1);
\def\ab{135} \coordinate (sb) at (\ab:1);
\def\ac{-155} \coordinate (sc) at (\ac:0.85);
\def\ad{-115} \coordinate (sd) at (\ad:0.85);
\def\ae{-15} \coordinate (se) at (\ae:1);
\def\af{15} \coordinate (sf) at (\af:1);

\fill [fill opacity = 0.1, red!50!purple] (sa).. controls +(\aa+20:0.25) and \control{(sb)}{(\ab-20:0.25)} .. controls +(\ab+180:0.75) and \control{(sc)}{(\ac+180:0.75)} .. controls +(\ac+20:0.25) and \control{(sd)}{(\ad-20:0.25)} .. controls +(\ad+180:0.75) and \control{(se)}{(\ae+180:0.75)} .. controls +(\ae+20:0.25) and \control{(sf)}{(\af-20:0.25)} .. controls +(\af+180:0.75) and \control{(sa)}{(\aa+180:0.75)};
\fill [fill opacity = 0.1, red!50!purple] (sa).. controls +(\aa+160:0.25) and \control{(sb)}{(\ab-160:0.25)} .. controls +(\ab+180:0.75) and \control{(sc)}{(\ac+180:0.75)} .. controls +(\ac+160:0.25) and \control{(sd)}{(\ad-160:0.25)} .. controls +(\ad+180:0.75) and \control{(se)}{(\ae+180:0.75)} .. controls +(\ae+160:0.25) and \control{(sf)}{(\af-160:0.25)} .. controls +(\af+180:0.75) and \control{(sa)}{(\aa+180:0.75)};
\draw [thick, red!50!purple] (sb) .. controls +(\ab+180:0.75) and \control{(sc)}{(\ac+180:0.75)};
\draw [thick, red!50!purple] (sd) .. controls +(\ad+180:0.75) and \control{(se)}{(\ae+180:0.75)};
\draw [thick, red!50!purple] (sf) .. controls +(\af+180:0.75) and \control{(sa)}{(\aa+180:0.75)} node (na) [pos = 0.4] {};
\coordinate (ca) at (na);

\fill [opacity = 0.1, blue] (sa) .. controls +(\aa+20:0.25) and \control{(sb)}{(\ab-20:0.25)} .. controls + (\ab-160:0.25) and \control{(sa)}{(\aa+160:0.25)};
\fill [opacity = 0.1, blue] (sc) .. controls +(\ac+20:0.25) and \control{(sd)}{(\ad-20:0.25)} .. controls + (\ad-160:0.25) and \control{(sc)}{(\ac+160:0.25)};
\fill [opacity = 0.1, blue] (se) .. controls +(\ae+20:0.25) and \control{(sf)}{(\af-20:0.25)} .. controls + (\af-160:0.25) and \control{(se)}{(\ae+160:0.25)};
\draw [blue!40!red] (sa) .. controls +(\aa+20:0.25) and \control{(sb)}{(\ab-20:0.25)} .. controls + (\ab-160:0.25) and \control{(sa)}{(\aa+160:0.25)};
\draw [blue!40!red] (sc) .. controls +(\ac+20:0.25) and \control{(sd)}{(\ad-20:0.25)} .. controls + (\ad-160:0.25) and \control{(sc)}{(\ac+160:0.25)};
\draw [blue!40!red] (se) .. controls +(\ae+20:0.25) and \control{(sf)}{(\af-20:0.25)} .. controls + (\af-160:0.25) and \control{(se)}{(\ae+160:0.25)};

\draw [red!50!purple, -Stealth] (ca) -- +(45:0.5);
\end{scope}

\begin{scope}[xshift = 9cm]
\def\aa{100} \coordinate (sa) at (\aa:1);
\def\ab{130} \coordinate (sb) at (\ab:1);
\def\ac{-150} \coordinate (sc) at (\ac:0.85);
\def\ad{-120} \coordinate (sd) at (\ad:0.85);
\def\ae{-10} \coordinate (se) at (\ae:1);
\def\af{10} \coordinate (sf) at (\af:1);

\fill [fill opacity = 0.1, red!50!purple] (sa).. controls +(\aa+20:0.25) and \control{(sb)}{(\ab-20:0.25)} .. controls +(\ab+180:0.8) and \control{(sc)}{(\ac+180:0.8)} .. controls +(\ac+20:0.25) and \control{(sd)}{(\ad-20:0.25)} .. controls +(\ad+180:0.8) and \control{(se)}{(\ae+180:0.8)} .. controls +(\ae+20:0.25) and \control{(sf)}{(\af-20:0.25)} .. controls +(\af+180:0.8) and \control{(sa)}{(\aa+180:0.8)};
\fill [fill opacity = 0.1, red!50!purple] (sa).. controls +(\aa+160:0.25) and \control{(sb)}{(\ab-160:0.25)} .. controls +(\ab+180:0.8) and \control{(sc)}{(\ac+180:0.8)} .. controls +(\ac+160:0.25) and \control{(sd)}{(\ad-160:0.25)} .. controls +(\ad+180:0.8) and \control{(se)}{(\ae+180:0.8)} .. controls +(\ae+160:0.25) and \control{(sf)}{(\af-160:0.25)} .. controls +(\af+180:0.8) and \control{(sa)}{(\aa+180:0.8)};
\begin{scope}[densely dotted]
\draw [thick, red!50!purple] (sb) .. controls +(\ab+180:0.8) and \control{(sc)}{(\ac+180:0.8)};
\draw [thick, red!50!purple] (sd) .. controls +(\ad+180:0.8) and \control{(se)}{(\ae+180:0.8)};
\draw [thick, red!50!purple] (sf) .. controls +(\af+180:0.8) and \control{(sa)}{(\aa+180:0.8)} node (na) [pos = 0.4] {};
\coordinate (ca) at (na);
\end{scope}

\begin{scope}[densely dotted]
\draw [blue!40!red] (sa) .. controls +(\aa+20:0.25) and \control{(sb)}{(\ab-20:0.25)} .. controls + (\ab-160:0.25) and \control{(sa)}{(\aa+160:0.25)};
\draw [blue!40!red] (sc) .. controls +(\ac+20:0.25) and \control{(sd)}{(\ad-20:0.25)} .. controls + (\ad-160:0.25) and \control{(sc)}{(\ac+160:0.25)};
\draw [blue!40!red] (se) .. controls +(\ae+20:0.25) and \control{(sf)}{(\af-20:0.25)} .. controls + (\af-160:0.25) and \control{(se)}{(\ae+160:0.25)};
\end{scope}

\fill [opacity = 0.1, blue] (sa) .. controls +(\aa+20:0.25) and \control{(sb)}{(\ab-20:0.25)} .. controls + (\ab-160:0.25) and \control{(sa)}{(\aa+160:0.25)};
\fill [opacity = 0.1, blue] (sc) .. controls +(\ac+20:0.25) and \control{(sd)}{(\ad-20:0.25)} .. controls + (\ad-160:0.25) and \control{(sc)}{(\ac+160:0.25)};
\fill [opacity = 0.1, blue] (se) .. controls +(\ae+20:0.25) and \control{(sf)}{(\af-20:0.25)} .. controls + (\af-160:0.25) and \control{(se)}{(\ae+160:0.25)};

\end{scope}

\begin{scope}[xshift=9cm]
\def\aa{95} \coordinate (sa) at (\aa:1);
\def\ab{135} \coordinate (sb) at (\ab:1);
\def\ac{-155} \coordinate (sc) at (\ac:0.85);
\def\ad{-115} \coordinate (sd) at (\ad:0.85);
\def\ae{-15} \coordinate (se) at (\ae:1);
\def\af{15} \coordinate (sf) at (\af:1);

\filldraw [fill opacity = 0.2, thick, red!50!purple] (sa).. controls +(\aa:1) and \control{(sb)}{(\ab:1)} .. controls +(\ab+180:0.75) and \control{(sc)}{(\ac+180:0.75)} .. controls +(\ac:1) and \control{(sd)}{(\ad:1)} .. controls +(\ad+180:0.75) and \control{(se)}{(\ae+180:0.75)} .. controls +(\ae:1.5) and \control{(sf)}{(\af:1.5)} .. controls +(\af+180:0.75) and \control{(sa)}{(\aa+180:0.75)}  node (na) [pos = 0.5] {};
\coordinate (ca) at (na);
\draw [red!50!purple, -Stealth] (ca) -- +(45:0.5);
\end{scope}

\node at (-40:1.5) [blue] {$S_t$};
\node at (-17.5:1.85) [red!40!purple] {$S_v$};
\node at ($(5,0)+(-20:1.35)$) [red!40!purple] {$S'_v$};
\node at ($(9,0)+(-17.5:1.85)$) [red!40!purple] {$S_v$};
\node at (3,0) {\Large $\Rightarrow$};
\node at (3,0.15) [above] {Lemma 4.4};
\node at (7.25,0) {\Large $\Rightarrow$};
\node at (7.25,0.15) [above] {Lemma 4.3};
\end{tikzpicture}
\caption{The side of sphere in $\T^i$ is indicated by an arrow pointing outwards.}
\label{pic_lem_15_ex}
\end{center}
\end{figure}

In the above, if $a(t) = 1$, all the small sides coincide and hence we have $a(v) =1$. Otherwise, the closed balls $A^i \in \T^i$, $i \in \{1,2\}$, lie on different sides of $S_t$ throughout the construction. Then, each closed ball $B^i_t \in \T^i$ contains $A^i$, and it follows that $B^1_v \not = B^2_v$ implying $a(v)=0$.
\end{proof}

\begin{lemma}\label{lem_gen_orient_local_bis}
Let $t \in [0,1]$. Then there exists a neighbourhood $V \subset [0,1]$ of $t$ such that $a$ is constant on $V \smallsetminus \{t\}$.
\end{lemma}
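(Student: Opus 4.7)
The plan is to split the argument into two cases based on whether $t$ is a critical time of $R$. If $t \in [0,1] \smallsetminus \{t_j\}_{1 \leq j \leq s}$, then \Cref{lem_gen_orient_local} directly supplies a neighborhood $V$ of $t$ on which $a$ is constant, so the conclusion is immediate. The substance of the lemma therefore lies in the case $t = t_j$, where I must show that $a$ does not jump as we cross this critical time. To that end, I would choose $V$ small enough that $t_j$ is the only critical time of $R$ in $V$; \Cref{lem_gen_orient_local} then guarantees that $a$ is constant on each connected component of $V \smallsetminus \{t_j\}$, and the task reduces to proving $a(v) = a(v')$ for arbitrary $v \in V \cap [0, t_j)$ and $v' \in V \cap (t_j, 1]$.

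The key geometric input is that the Reidemeister II move at $t_j$ modifies $S_t$ only in a small ball $B_0$ surrounding the tangent point of $S_{t_j}$ with $M$. After a small isotopy, $S_v$ and $S_{v'}$ may be taken to coincide outside $B_0$; inside $B_0$ they appear as two discs $D_v, D_{v'}$ with common boundary circle $\gamma = \partial B_0 \cap S_v = \partial B_0 \cap S_{v'}$, one of which is disjoint from $M$ and the other meets $M$ in exactly two points. Together these discs bound a $3$-ball $B' \subset B_0$ which is $M$-trivial (its intersection with $M$ is empty or a single unknotted arc) and satisfies $|\partial B' \cap M| = 2 < k$, so property \ref{def_T4} gives $B' \in \T^i$ for each $i \in \{1,2\}$. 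Moreover, $S_v \cap S_{v'}$ is a disc (the common part outside $\mathring{B_0}$), so $\{S_v, S_{v'}\}$ forms a double bubble whose three complementary regions are exactly $B_v^+$, $B_{v'}^-$, and $B'$, once the sides are labelled so that $B_v^+ \subset B_{v'}^+$ (equivalently $B_{v'}^- \subset B_v^-$).

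For a fixed $i \in \{1, 2\}$, I would then show that the small side of $\T^i$ is preserved across $t_j$ by case analysis. If $B_v^+ \in \T^i$, then since $B' \in \T^i$, the double-bubble axiom \ref{def_T3} applied to the triple $\{B_v^+, B_{v'}^-, B'\}$ forces $B_{v'}^- \notin \T^i$, and hence $B_{v'}^+ \in \T^i$ by \ref{def_T2}. If instead $B_v^- \in \T^i$, then $B_{v'}^- \subset B_v^-$ with $|\partial B_{v'}^- \cap M^i| < k$, so \Cref{lem_bubble_tangle_monot} (stability by inclusion of compression bubble tangles) directly gives $B_{v'}^- \in \T^i$. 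In both cases the geometric side of the small ball for $\T^i$ is preserved across $t_j$; applying this simultaneously for $i = 1$ and $i = 2$ yields $B_v^1 = B_v^2$ if and only if $B_{v'}^1 = B_{v'}^2$, that is, $a(v) = a(v')$.

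The main obstacle I expect is identifying the correct double bubble: it is tempting to hunt for three separate spheres, but here the pair $\{S_v, S_{v'}\}$ itself already supplies the double bubble, with the ``third sphere'' $\partial B' = D_v \cup D_{v'}$ emerging naturally from the local Reidemeister II move. A secondary subtlety is ensuring $B_0$ is small enough that $M \cap B_0$ is a single unknotted subarc (so that $B'$ is genuinely $M$-trivial), which is permissible because $S_{t_j}$ is only finitely tangent to $M$ at $t_j$.
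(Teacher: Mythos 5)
Your non-critical case is exactly the paper's (quote \Cref{lem_gen_orient_local} and stop), but your treatment of a critical time $t=t_j$ takes a genuinely different route. The paper tracks the tangency point $p$, covers the path of the corresponding point of the sweep-out by an $M^1$-trivial ball $B$, and compares $S_u$ with $\partial(B^1_u\cup B)$ or $\partial(B^1_u\smallsetminus B)$, re-running the intersection-/braid-equivalence machinery of \Cref{lem_gen_orient_local}; you instead normalise $S_v$ and $S_{v'}$ to agree outside a small ball $B_0$ around the tangency and observe that the pair $\{S_v,S_{v'}\}$ is itself a double bubble whose third complementary region $B'$ is an $M$-trivial ball, then conclude via \ref{def_T4}, \ref{def_T3} and \ref{def_T2} in one case and via \Cref{lem_bubble_tangle_monot} in the other. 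This combinatorial core is correct, treats $\T^1$ and $\T^2$ uniformly (the paper handles the component not involved in the tangency separately), and is arguably more transparent than the paper's appeal to ``the methods of \Cref{lem_gen_orient_local}''. (A minor quantitative point: $B'\in\T^1$ via \ref{def_T4} needs $2<\tfrac{2}{3}n$, i.e.\ $n\geq 4$; for smaller $n$ the standing assumption $\addcross(\mathcal{D},R)<\tfrac{2}{3}n-2$ is vacuous, so nothing is lost -- the paper's own proof has the same implicit restriction.)

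The gap is in the sentence ``after a small isotopy, $S_v$ and $S_{v'}$ may be taken to coincide outside $B_0$'', which does more work than you acknowledge. First, your $V$ must avoid \emph{all} Reidemeister-move times other than $t_j$, not only the critical ones $t_{j'}$; even then, between $v$ and $v'$ the diagram undergoes planar isotopy, so the two spheres really do differ outside $B_0$, albeit by an ambient isotopy that never crosses $M$. Second, and more seriously, $a(v)$ is defined through the actual sweep-out spheres, and membership of a ball in a compression bubble tangle is determined by its intersection with the torus $\Sigma_i$, which is \emph{not} invariant under isotopies for free -- controlling exactly this is the whole point of \Cref{lem_gen_close_enough}--\Cref{lem_braid_equi} and of the intersection-equivalence arguments in \Cref{lem_gen_orient_local}. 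So when you replace $S_v$ and $S_{v'}$ by normalised copies you must argue that the small sides of $\T^1,\T^2$ transfer to the copies: push the original sphere and its normalised copy slightly off each other; since the connecting isotopy does not cross $M$, the product region between them meets $M$ in a trivial braid, so they are braid-equivalent and \Cref{lem_braid_equi} (together with \ref{def_T2}) identifies their small sides. With that transfer made explicit, your double-bubble argument goes through and yields $a(v)=a(v')$ as claimed.
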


\begin{proof}[Proof of \Cref{lem_gen_orient_local_bis}]
If $t$ is non-critical, the statement holds by \Cref{lem_gen_orient_local}. Otherwise, assume that $t = t_j$ for some $1\leq j \leq s$. Let $V$ be an open connected neighbourhood of $t$ in $[0,1]$ that does not connect the other critical times. Up to intersection with a neighbourhood provided by \Cref{lem_gen_close_enough}, we assume that the spheres $S_v$ for $v \in V$ are either disjoint or intersection-equivalent to $S_t$. We want to show that $a(u)= a(v)$ where $u$ and $v$ are in disjoint components of $V \smallsetminus t$.

Without loss of generality, let $p$ be the point of $M^1$ tangent to $S_t$ at time $t$ (we assume that $p \in M^1$ since the knots play symmetrical roles). Let $x$ be the point of $S_u$ such that $\Theta(x, t) =p$. Denote by $P$ the path followed by $x$ during the sweep-out by the spheres between $u$ and $v$, that is, $P = \Theta (x, [u,v])$. Cover $P$ by a closed ball $B$ that intersects $S_u$ and $S_v$ on a single disc each. This ball is $M^1$-trivial. This set-up is illustrated in the left and middle parts of \Cref{pic_lem_16_ex}.

Notice that the knot $M^2$ presents no tangency with $S_t$ in between $S_u$ and $S_v$. So \Cref{lem_gen_orient_local} applies to $\T^2$ and we remark that $p$ must be on different sides of $M^2$ with respect to $\T^2$, i.e., that $p \in B^2_u$ if and only if $p \not \in B^2_v$, because the tangency at $t$ stems from (the lift of) a RII move. Let us show that similarly, $p$ must be on different sides of $M^1$ with respect to $\T^1$ (see right part of \Cref{pic_lem_16_ex}).

\begin{figure}[ht]
\begin{center}
\begin{tikzpicture}[scale = 1.2]
\clip (-1.4,1.75) rectangle (11.15,-1.4);
\def\aa{95} \coordinate (sa) at (\aa:0.95);
\def\ab{135} \coordinate (sb) at (\ab:0.95);
\def\ac{-155} \coordinate (sc) at (\ac:0.85);
\def\ad{-115} \coordinate (sd) at (\ad:0.85);
\def\ae{-15} \coordinate (se) at (\ae:0.9);
\def\af{15} \coordinate (sf) at (\af:0.9);
\def\ag{80} \coordinate (sg) at (\ag:1);
\def\ah{150} \coordinate (sh) at (\ah:1);
\def\ai{-170} \coordinate (si) at (\ai:1);
\def\aj{-100} \coordinate (sj) at (\aj:1);
\def\ak{-30} \coordinate (sk) at (\ak:1);
\def\al{30} \coordinate (sl) at (\al:1);

\begin{scope}[even odd rule]
\clip (-2,2) rectangle (14,-2) (sg).. controls +(\ag:1) and \control{(sh)}{(\ab:1)} .. controls +(\ah+180:0.35) and \control{(si)}{(\ai+180:0.35)} .. controls +(\ai:1) and \control{(sj)}{(\aj:1)} .. controls +(\aj+180:0.35) and \control{(sk)}{(\ak+180:0.35)} .. controls +(\ak:1) and \control{(sl)}{(\al:1)} .. controls +(\al+180:0.35) and \control{(sg)}{(\ag+180:0.35)};
\draw [blue, thick] (0,0) circle (1.25);
\end{scope}
\begin{scope}
\clip (sg).. controls +(\ag:1) and \control{(sh)}{(\ab:1)} .. controls +(\ah+180:0.35) and \control{(si)}{(\ai+180:0.35)} .. controls +(\ai:1) and \control{(sj)}{(\aj:1)} .. controls +(\aj+180:0.35) and \control{(sk)}{(\ak+180:0.35)} .. controls +(\ak:1) and \control{(sl)}{(\al:1)} .. controls +(\al+180:0.35) and \control{(sg)}{(\ag+180:0.35)};
\draw [blue, thick, dotted] (0,0) circle (1.25);
\end{scope}
\fill [blue,opacity = 0.1] (0,0) circle (1.25);

\fill [red!50!blue, opacity = 0.1] (sg).. controls +(\ag:1) and \control{(sh)}{(\ab:1)} .. controls +(\ah+180:0.35) and \control{(si)}{(\ai+180:0.35)} .. controls +(\ai:1) and \control{(sj)}{(\aj:1)} .. controls +(\aj+180:0.35) and \control{(sk)}{(\ak+180:0.35)} .. controls +(\ak:1) and \control{(sl)}{(\al:1)} .. controls +(\al+180:0.35) and \control{(sg)}{(\ag+180:0.35)};
\begin{scope}[even odd rule]
\clip (-2,2) rectangle (14,-2) (sa).. controls +(\aa:1.35) and \control{(sb)}{(\ab:1.35)} .. controls +(\ab+180:0.75) and \control{(sc)}{(\ac+180:0.75)} .. controls +(\ac:1.25) and \control{(sd)}{(\ad:1.25)} .. controls +(\ad+180:0.75) and \control{(se)}{(\ae+180:0.75)} .. controls +(\ae:1.65) and \control{(sf)}{(\af:1.65)} .. controls +(\af+180:0.75) and \control{(sa)}{(\aa+180:0.75)};
\draw [red!50!blue,thick] (sg).. controls +(\ag:1) and \control{(sh)}{(\ab:1)} (si) .. controls +(\ai:1) and \control{(sj)}{(\aj:1)} (sk) .. controls +(\ak:1) and \control{(sl)}{(\al:1)};
\end{scope}
\begin{scope}
\clip (sa).. controls +(\aa:1.35) and \control{(sb)}{(\ab:1.35)} .. controls +(\ab+180:0.75) and \control{(sc)}{(\ac+180:0.75)} .. controls +(\ac:1.25) and \control{(sd)}{(\ad:1.25)} .. controls +(\ad+180:0.75) and \control{(se)}{(\ae+180:0.75)} .. controls +(\ae:1.65) and \control{(sf)}{(\af:1.65)} .. controls +(\af+180:0.75) and \control{(sa)}{(\aa+180:0.75)};
\draw [red!50!blue, thick, dotted] (sg).. controls +(\ag:1) and \control{(sh)}{(\ab:1)} (si) .. controls +(\ai:1) and \control{(sj)}{(\aj:1)} (sk) .. controls +(\ak:1) and \control{(sl)}{(\al:1)};
\end{scope}
\draw [red!50!blue, thick, dotted] (sh) .. controls +(\ah+180:0.35) and \control{(si)}{(\ai+180:0.35)} (sj) .. controls +(\aj+180:0.35) and \control{(sk)}{(\ak+180:0.35)} (sl) .. controls +(\al+180:0.35) and \control{(sg)}{(\ag+180:0.35)};
\path (sk) .. controls +(\ak:1) and \control{(sl)}{(\al:1)} node (np) [midway] {};
\coordinate (cp) at (np);

\def\ec{1.35 cm}
\fill [fill opacity = 0.1, red] (sa).. controls +(\aa:1.35) and \control{(sb)}{(\ab:1.35)} .. controls +(\ab+180:0.75) and \control{(sc)}{(\ac+180:0.75)} .. controls +(\ac:1.25) and \control{(sd)}{(\ad:1.25)} .. controls +(\ad+180:0.75) and \control{(se)}{(\ae+180:0.75)} .. controls +(\ae:1.65) and \control{(sf)}{(\af:1.65)} .. controls +(\af+180:0.75) and \control{(sa)}{(\aa+180:0.75)};
\path [name path = redsphere] (sa).. controls +(\aa:1.35) and \control{(sb)}{(\ab:1.35)} .. controls +(\ab+180:0.75) and \control{(sc)}{(\ac+180:0.75)} .. controls +(\ac:1.25) and \control{(sd)}{(\ad:1.25)} .. controls +(\ad+180:0.75) and \control{(se)}{(\ae+180:0.75)} .. controls +(\ae:1.65) and \control{(sf)}{(\af:1.65)} .. controls +(\af+180:0.75) and \control{(sa)}{(\aa+180:0.75)};
\path [name path = bluesphere] (0,0) circle (\ec);
\path [name intersections={of=redsphere and bluesphere,total=\tot}]
\foreach \s in {1,...,\tot}{coordinate (i\s) at (intersection-\s)};

\begin{scope}
\clip (0,0) circle (\ec);
\draw [red, dotted] (sa).. controls +(\aa:1.35) and \control{(sb)}{(\ab:1.35)} .. controls +(\ab+180:0.75) and \control{(sc)}{(\ac+180:0.75)} .. controls +(\ac:1.25) and \control{(sd)}{(\ad:1.25)} .. controls +(\ad+180:0.75) and \control{(se)}{(\ae+180:0.75)} .. controls +(\ae:1.65) and \control{(sf)}{(\af:1.65)} .. controls +(\af+180:0.75) and \control{(sa)}{(\aa+180:0.75)};
\end{scope}
\begin{scope}[even odd rule]
\clip (0,0) circle (\ec) (-2,2) rectangle (14,-2);
\draw [red, thick] (sa).. controls +(\aa:1.35) and \control{(sb)}{(\ab:1.35)} .. controls +(\ab+180:0.75) and \control{(sc)}{(\ac+180:0.75)} .. controls +(\ac:1.25) and \control{(sd)}{(\ad:1.25)} .. controls +(\ad+180:0.75) and \control{(se)}{(\ae+180:0.75)} .. controls +(\ae:1.65) and \control{(sf)}{(\af:1.65)} .. controls +(\af+180:0.75) and \control{(sa)}{(\aa+180:0.75)};
\end{scope}

\draw [thick, dashed, red] (sa) .. controls +(\aa+20:0.2) and \control{(sb)}{(\ab-20:0.25)} .. controls + (\ab-160:0.25) and \control{(sa)}{(\aa+160:0.25)};
\draw [thick, dashed, red] (sc) .. controls +(\ac+20:0.2) and \control{(sd)}{(\ad-20:0.25)} .. controls + (\ad-160:0.25) and \control{(sc)}{(\ac+160:0.25)};
\draw [thick, dashed, red] (se) .. controls +(\ae+20:0.2) and \control{(sf)}{(\af-20:0.2)} .. controls + (\af-160:0.2) and \control{(se)}{(\ae+160:0.2)};

\draw [thick, dashed, red!50!blue] (sg) .. controls +(\ag+20:0.4) and \control{(sh)}{(\ah-20:0.4)} .. controls + (\ah-160:0.3) and \control{(sg)}{(\ag+160:0.3)};
\draw [thick, dashed, red!50!blue] (si) .. controls +(\ai+20:0.3) and \control{(sj)}{(\aj-20:0.3)} .. controls + (\aj-160:0.5) and \control{(si)}{(\ai+160:0.5)};
\draw [thick, dashed, red!50!blue] (sk) .. controls +(\ak+20:0.3) and \control{(sl)}{(\al-20:0.3)} .. controls + (\al-160:0.4) and \control{(sk)}{(\ak+160:0.4)};

\coordinate (cp2) at ($(cp)+(0.4,-0.45)$);
\coordinate (cp3) at ($(cp)!0.4!(cp2)$);
\coordinate (cp4) at ($(cp)+(0.1,0.375)$);
\draw (2.25,-0.5) -- (cp2) -- (cp3);
\draw [opacity = 0.2] (cp3) -- (cp);
\begin{scope}
\clip (sa).. controls +(\aa:1.35) and \control{(sb)}{(\ab:1.35)} .. controls +(\ab+180:0.75) and \control{(sc)}{(\ac+180:0.75)} .. controls +(\ac:1.25) and \control{(sd)}{(\ad:1.25)} .. controls +(\ad+180:0.75) and \control{(se)}{(\ae+180:0.75)} .. controls +(\ae:1.65) and \control{(sf)}{(\af:1.65)} .. controls +(\af+180:0.75) and \control{(sa)}{(\aa+180:0.75)};
\draw [opacity = 0.2] (cp4) -- (cp);
\end{scope}
\begin{scope}[even odd rule]
\clip (-2,2) rectangle (14,-2) (sa).. controls +(\aa:1.35) and \control{(sb)}{(\ab:1.35)} .. controls +(\ab+180:0.75) and \control{(sc)}{(\ac+180:0.75)} .. controls +(\ac:1.25) and \control{(sd)}{(\ad:1.25)} .. controls +(\ad+180:0.75) and \control{(se)}{(\ae+180:0.75)} .. controls +(\ae:1.65) and \control{(sf)}{(\af:1.65)} .. controls +(\af+180:0.75) and \control{(sa)}{(\aa+180:0.75)};
\draw (cp) -- (cp4) --(2.25,0.5);
\end{scope}

\fill (cp) circle (0.05cm);
\node at (cp) [right] {$p$};
\node [blue] at (-50:1.5) {$S_u$};
\node [red!80!black] at (0:2.35) {$S_v$};
\node [red!50!blue] at (27.5:1.6) {$S_t$};
\node at (-17.5:2.35) {$M^1$};

\begin{scope}[xshift = 4.5cm]
\def\aa{95} \coordinate (sa) at (\aa:0.95);
\def\ab{135} \coordinate (sb) at (\ab:0.95);
\def\ac{-155} \coordinate (sc) at (\ac:0.85);
\def\ad{-115} \coordinate (sd) at (\ad:0.85);
\def\ae{-15} \coordinate (se) at (\ae:0.9);
\def\af{15} \coordinate (sf) at (\af:0.9);
\def\ag{80} \coordinate (sg) at (\ag:1);
\def\ah{150} \coordinate (sh) at (\ah:1);
\def\ai{-170} \coordinate (si) at (\ai:1);
\def\aj{-100} \coordinate (sj) at (\aj:1);
\def\ak{-30} \coordinate (sk) at (\ak:1);
\def\al{30} \coordinate (sl) at (\al:1);
\coordinate (u) at (10:0.9);
\coordinate (d) at (-10:0.9);
\coordinate (cpl) at (0:0.9);
\coordinate (ud) at ($(10:0.9)+(1.5,0)$);
\coordinate (dd) at ($(-10:0.9)+(1.5,0)$);
\coordinate (cpd) at ($(0:0.9)+(1.5,0)$);

\begin{scope}[even odd rule]
\clip (-2,2) rectangle (14,-2) (sa).. controls +(\aa:1.35) and \control{(sb)}{(\ab:1.35)} .. controls +(\ab+180:0.75) and \control{(sc)}{(\ac+180:0.75)} .. controls +(\ac:1.25) and \control{(sd)}{(\ad:1.25)} .. controls +(\ad+180:0.75) and \control{(se)}{(\ae+180:0.75)} .. controls +(\ae:1.65) and \control{(sf)}{(\af:1.65)} .. controls +(\af+180:0.75) and \control{(sa)}{(\aa+180:0.75)};
\draw [blue, thick] (0,0) circle (1.25);
\end{scope}
\begin{scope}
\clip (sa).. controls +(\aa:1.35) and \control{(sb)}{(\ab:1.35)} .. controls +(\ab+180:0.75) and \control{(sc)}{(\ac+180:0.75)} .. controls +(\ac:1.25) and \control{(sd)}{(\ad:1.25)} .. controls +(\ad+180:0.75) and \control{(se)}{(\ae+180:0.75)} .. controls +(\ae:1.65) and \control{(sf)}{(\af:1.65)} .. controls +(\af+180:0.75) and \control{(sa)}{(\aa+180:0.75)};
\draw [blue, thick, dotted] (0,0) circle (1.25);
\end{scope}
\fill [blue,opacity = 0.1] (0,0) circle (1.25);

\path (sk) .. controls +(\ak:1) and \control{(sl)}{(\al:1)} node (np) [midway] {};
\coordinate (cp) at (np);

\fill [fill opacity = 0.1, red] (sa).. controls +(\aa:1.35) and \control{(sb)}{(\ab:1.35)} .. controls +(\ab+180:0.75) and \control{(sc)}{(\ac+180:0.75)} .. controls +(\ac:1.25) and \control{(sd)}{(\ad:1.25)} .. controls +(\ad+180:0.75) and \control{(se)}{(\ae+180:0.75)} .. controls +(\ae:1.65) and \control{(sf)}{(\af:1.65)} .. controls +(\af+180:0.75) and \control{(sa)}{(\aa+180:0.75)};
\draw [dotted, thick, red] (sb) .. controls +(\ab+180:0.75) and \control{(sc)}{(\ac+180:0.75)} (sd) .. controls +(\ad+180:0.75) and \control{(se)}{(\ae+180:0.75)} (sf) .. controls +(\af+180:0.75) and \control{(sa)}{(\aa+180:0.75)};
\begin{scope}[even odd rule]
\clip (-2,2) rectangle (14,-2) (d) .. controls + (170:0.1) and \control{(u)}{(-170:0.1)} -- (ud) -- (dd) -- (d);
\draw [thick, red] (sa).. controls +(\aa:1.35) and \control{(sb)}{(\ab:1.35)}(sc) .. controls +(\ac:1.25) and \control{(sd)}{(\ad:1.25)} (se) .. controls +(\ae:1.65) and \control{(sf)}{(\af:1.65)};
\end{scope}

\draw [thick, dashed, red] (sa) .. controls +(\aa+20:0.2) and \control{(sb)}{(\ab-20:0.25)} .. controls + (\ab-160:0.25) and \control{(sa)}{(\aa+160:0.25)};
\draw [thick, dashed, red] (sc) .. controls +(\ac+20:0.2) and \control{(sd)}{(\ad-20:0.25)} .. controls + (\ad-160:0.25) and \control{(sc)}{(\ac+160:0.25)};
\draw [thick, dashed, red] (se) .. controls +(\ae+20:0.2) and \control{(sf)}{(\af-20:0.2)} .. controls + (\af-160:0.2) and \control{(se)}{(\ae+160:0.2)};

\begin{scope}[brown]
\clip (sa).. controls +(\aa:1.35) and \control{(sb)}{(\ab:1.35)} .. controls +(\ab+180:0.75) and \control{(sc)}{(\ac+180:0.75)} .. controls +(\ac:1.25) and \control{(sd)}{(\ad:1.25)} .. controls +(\ad+180:0.75) and \control{(se)}{(\ae+180:0.75)} .. controls +(\ae:1.65) and \control{(sf)}{(\af:1.65)} .. controls +(\af+180:0.75) and \control{(sa)}{(\aa+180:0.75)};
\draw [opacity = 0.35] (u) .. controls +(-10:0.1) and \control{(d)}{(10:0.1)};
\draw [opacity = 0.75] (d) .. controls + (170:0.1) and \control{(u)}{(-170:0.1)} -- (ud) -- (dd) -- (d);
\fill [opacity = 0.2] (d) .. controls + (170:0.1) and \control{(u)}{(-170:0.1)} -- (ud) -- (dd) -- (d);
\draw [black, opacity = 0.8] (cpl) -- (cpd);
\end{scope}
\begin{scope}
\clip (d) .. controls + (170:0.1) and \control{(u)}{(-170:0.1)} -- (ud) -- (dd) -- (d);
\draw [brown, thick] (sa).. controls +(\aa:1.35) and \control{(sb)}{(\ab:1.35)}(sc) .. controls +(\ac:1.25) and \control{(sd)}{(\ad:1.25)} (se) .. controls +(\ae:1.65) and \control{(sf)}{(\af:1.65)};
\end{scope}
\fill (cp) circle (0.05cm);

\path [name path = brownsphere] (d) .. controls + (170:0.1) and \control{(u)}{(-170:0.1)} -- (ud) -- (dd) -- (d);
\path [name path = redsphere] (sa).. controls +(\aa:1.35) and \control{(sb)}{(\ab:1.35)} .. controls +(\ab+180:0.75) and \control{(sc)}{(\ac+180:0.75)} .. controls +(\ac:1.25) and \control{(sd)}{(\ad:1.25)} .. controls +(\ad+180:0.75) and \control{(se)}{(\ae+180:0.75)} .. controls +(\ae:1.65) and \control{(sf)}{(\af:1.65)} .. controls +(\af+180:0.75) and \control{(sa)}{(\aa+180:0.75)};
\path [name intersections={of=redsphere and brownsphere,total=\tot}]
\foreach \s in {1,...,\tot}{coordinate (i\s) at (intersection-\s)};
\draw [opacity = 0.75, brown] (i1) .. controls + (170:0.1) and \control{(i2)}{(-170:0.1)};
\draw [opacity = 0.75, brown] (i2) .. controls +(-10:0.05) and \control{(i1)}{(10:0.05)};

\node [red!80!black] at (90:1.55) {$S_v$};
\node [blue] at (-50:1.5) {$S_u$};
\node [brown] at (-10:2.25) {$B$};
\node at (2.5:2.25) {$P$};
\end{scope}

\begin{scope}[xshift = 9cm]
\def\aa{95} \coordinate (sa) at (\aa:0.95);
\def\ab{135} \coordinate (sb) at (\ab:0.95);
\def\ac{-155} \coordinate (sc) at (\ac:0.85);
\def\ad{-115} \coordinate (sd) at (\ad:0.85);
\def\ae{-15} \coordinate (se) at (\ae:0.9);
\def\af{15} \coordinate (sf) at (\af:0.9);
\def\ag{80} \coordinate (sg) at (\ag:1);
\def\ah{150} \coordinate (sh) at (\ah:1);
\def\ai{-170} \coordinate (si) at (\ai:1);
\def\aj{-100} \coordinate (sj) at (\aj:1);
\def\ak{-30} \coordinate (sk) at (\ak:1);
\def\al{30} \coordinate (sl) at (\al:1);
\coordinate (u) at (10:0.9);
\coordinate (d) at (-10:0.9);
\coordinate (cpl) at (0:0.9);
\coordinate (ud) at ($(10:0.9)+(1.5,0)$);
\coordinate (dd) at ($(-10:0.9)+(1.5,0)$);
\coordinate (cpd) at ($(0:0.9)+(1.5,0)$);

\begin{scope}[even odd rule]
\clip (-2,2) rectangle (14,-2) (sa).. controls +(\aa:1.35) and \control{(sb)}{(\ab:1.35)} .. controls +(\ab+180:0.75) and \control{(sc)}{(\ac+180:0.75)} .. controls +(\ac:1.25) and \control{(sd)}{(\ad:1.25)} .. controls +(\ad+180:0.75) and \control{(se)}{(\ae+180:0.75)} .. controls +(\ae:1.65) and \control{(sf)}{(\af:1.65)} .. controls +(\af+180:0.75) and \control{(sa)}{(\aa+180:0.75)};
\draw [blue, thick] (0,0) circle (1.25);
\end{scope}
\begin{scope}
\clip (sa).. controls +(\aa:1.35) and \control{(sb)}{(\ab:1.35)} .. controls +(\ab+180:0.75) and \control{(sc)}{(\ac+180:0.75)} .. controls +(\ac:1.25) and \control{(sd)}{(\ad:1.25)} .. controls +(\ad+180:0.75) and \control{(se)}{(\ae+180:0.75)} .. controls +(\ae:1.65) and \control{(sf)}{(\af:1.65)} .. controls +(\af+180:0.75) and \control{(sa)}{(\aa+180:0.75)};
\draw [blue, thick, dotted] (0,0) circle (1.25);
\end{scope}
\fill [blue,opacity = 0.1] (0,0) circle (1.25);

\path (sk) .. controls +(\ak:1) and \control{(sl)}{(\al:1)} node (np) [midway] {};
\coordinate (cp) at (np);

\fill [fill opacity = 0.1, red] (sa).. controls +(\aa:1.35) and \control{(sb)}{(\ab:1.35)} .. controls +(\ab+180:0.75) and \control{(sc)}{(\ac+180:0.75)} .. controls +(\ac:1.25) and \control{(sd)}{(\ad:1.25)} .. controls +(\ad+180:0.75) and \control{(se)}{(\ae+180:0.75)} .. controls +(\ae:1.65) and \control{(sf)}{(\af:1.65)} .. controls +(\af+180:0.75) and \control{(sa)}{(\aa+180:0.75)};
\draw [dotted, thick, red] (sb) .. controls +(\ab+180:0.75) and \control{(sc)}{(\ac+180:0.75)} (sd) .. controls +(\ad+180:0.75) and \control{(se)}{(\ae+180:0.75)} (sf) .. controls +(\af+180:0.75) and \control{(sa)}{(\aa+180:0.75)};
\draw [thick, red] (sa).. controls +(\aa:1.35) and \control{(sb)}{(\ab:1.35)}(sc) .. controls +(\ac:1.25) and \control{(sd)}{(\ad:1.25)} (se) .. controls +(\ae:1.65) and \control{(sf)}{(\af:1.65)};

\draw [thick, dashed, red] (sa) .. controls +(\aa+20:0.2) and \control{(sb)}{(\ab-20:0.25)} .. controls + (\ab-160:0.25) and \control{(sa)}{(\aa+160:0.25)};
\draw [thick, dashed, red] (sc) .. controls +(\ac+20:0.2) and \control{(sd)}{(\ad-20:0.25)} .. controls + (\ad-160:0.25) and \control{(sc)}{(\ac+160:0.25)};
\draw [thick, dashed, red] (se) .. controls +(\ae+20:0.2) and \control{(sf)}{(\af-20:0.2)} .. controls + (\af-160:0.2) and \control{(se)}{(\ae+160:0.2)};

\node [red!80!black] at (90:1.55) {$S_v$};
\node [blue] at (-50:1.5) {$S_u$};
\fill (cp) circle (0.05cm);
\node at (cp) [right] {$p$};

\draw [blue, -Stealth] (45:1.25) -- +(45:0.5);
\path (se) .. controls +(\ae:1.65) and \control{(sf)}{(\af:1.65)} node [pos = 0.8] (na) {};
\coordinate (ca) at (na);
\draw [red, -Stealth] (ca) -- +(45:0.5);

\end{scope}
\end{tikzpicture}
\caption{All spheres are braid-equivalent with respect to $M^2$. Left: $p$ is the tangent point between $M^1$ and $S_t$. Middle: definition of the path $P$ and the ball $B$. Right: $p$ is on different sides of $B^2_u$ and $B^2_v$.}
\label{pic_lem_16_ex}
\end{center}
\end{figure}

\noindent
{\bf Case 1:} $p \not\in B^1_u$. Note that $|\partial(B^1_u \cup B) \cap M^1 | < k$ by construction of $B$ and assumptions on $S_t$. By \ref{def_T3}, $B^1_u \cup B \in \T^1$. Furthermore, by construction, $\partial (B^1_u \cup B)$ and $S_u$ are intersection-equivalent. The methods of \Cref{lem_gen_orient_local} apply and imply that the side of $S_v$ containing $p$ is in $\T^1$.

\noindent
{\bf Case 2:} $p \in B^1_u$. We still have $|\partial(B^1_u \smallsetminus B) \cap M^1 | < k$ by construction of $B$ and the assumptions on $S_t$. By \Cref{lem_bubble_tangle_monot}, $B^1_u \smallsetminus B \in \T^1$. By construction, $\partial (B^1_u \cup B)$ and $S_u$ are intersection-equivalent. The methods of \Cref{lem_gen_orient_local} apply and we can infer that the side of $S_v$ not containing $p$ is in $\T^1$.

Hence, for $i \in \{1,2\}$, $p \in B^i_u \Leftrightarrow p \not \in B^i_v$. This implies that $a(u) = a(v)$, and concludes our proof. 
\end{proof}

\begin{proposition}\label{prop_gen_orient_consistency}
The agreement $a$ is constant on $[0,1] \smallsetminus \{t_j\}_{1 \leq j\leq s}$ and hence $a(0)=a(1)$.
\end{proposition}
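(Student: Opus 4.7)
The plan is to deduce the proposition almost entirely from Lemmas~\ref{lem_gen_orient_local} and~\ref{lem_gen_orient_local_bis} by a connectedness argument, and then package the contradiction with the already established values $a(0)=0$ and $a(1)=1$. First, I would observe that the domain $I = [0,1] \smallsetminus \{t_j\}_{1 \leq j \leq s}$ is a finite disjoint union of open intervals of $[0,1]$, cut out by the finitely many critical times $t_1 < \cdots < t_s$ of RII moves between $\mathcal{U}_t$ and $\mathcal{M}_t$. On any such open interval, Lemma~\ref{lem_gen_orient_local} tells us that each point has a neighborhood on which $a$ is constant; hence each level set of $a$ (restricted to that interval) is open, and since an interval is connected and $a$ is $\{0,1\}$-valued, $a$ must be constant on that interval.

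Next I would promote this local constancy across critical times. Fix any $t_j$ and apply Lemma~\ref{lem_gen_orient_local_bis} to obtain a neighborhood $V \subset [0,1]$ of $t_j$ on which $a$ is constant on $V \smallsetminus \{t_j\}$. For $V$ small enough, $V \smallsetminus \{t_j\}$ meets both of the two open components of $I$ that are adjacent to $t_j$ (or just the one, if $t_j$ happens to lie at an endpoint of $[0,1]$, but the setup places the $t_j$ strictly in the interior). Since $a$ is already constant on each of the adjacent components by the previous step, and its values on the two sides of $t_j$ agree by Lemma~\ref{lem_gen_orient_local_bis}, the constant values on the two adjacent components must coincide. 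Applying this at every $t_j$ in turn lets me chain equal values across all the critical times, yielding that $a$ is constant on the whole of $I$.

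Finally, since $0, 1 \in I$ (neither is a critical time of a move involving $\mathcal{U}_t$ and $\mathcal{M}_t$, as $\mathcal{D}_0 = \mathcal{D}$ and $\mathcal{D}_1 = \mathcal{D}'$ are honest link diagrams), the constancy of $a$ gives $a(0) = a(1)$, as claimed. I expect no real obstacle here: the whole content of the proposition is already encoded in Lemmas~\ref{lem_gen_orient_local} and~\ref{lem_gen_orient_local_bis}, and the only genuine subtlety to watch is that the neighborhood $V$ around $t_j$ in Lemma~\ref{lem_gen_orient_local_bis} genuinely reaches both sides of $t_j$; this is ensured by shrinking $V$ to avoid the other critical times, exactly as was done in the proof of Lemma~\ref{lem_gen_orient_local_bis} itself. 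Combined with the earlier observations $a(0)=0$ and $a(1)=1$, Proposition~\ref{prop_gen_orient_consistency} will then directly yield the desired contradiction, completing the proof of Theorem~\ref{th_unlink_analysed}.
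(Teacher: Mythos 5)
Your proposal is correct and follows essentially the same route as the paper: local constancy on each component of $[0,1]\smallsetminus\{t_j\}$ via \Cref{lem_gen_orient_local} (a connectedness argument, where the paper phrases it via compactness and a finite cover), then bridging the values across each critical time $t_j$ using \Cref{lem_gen_orient_local_bis}. The only difference is cosmetic, so no further comparison is needed.
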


\begin{proof}
We can cover $[0,1]$ by open discs from \Cref{lem_gen_orient_local_bis} on which $a$ is constant. Since $[0,1]$ is compact, only finitely many of them are enough to cover it. On each connected component of $[0,1] \smallsetminus \{t_j\}_{1\leq j\leq s}$ the agreement function is constant by the continuity of $a$ ($a$ is locally constant). Moreover, we know from \Cref{lem_gen_orient_local_bis} that for $u < t_j < v$ close enough we have $a(u)=a(v)$. Hence, $a$ is constant on $[0,1] \smallsetminus \{t_j\}_{1 \leq j \leq s}$, and $a(0)=a(1)$. 
\end{proof}

\begin{proof}[Proof of \Cref{th_unlink_analysed}]
The initial discussion of this subsection states that $a(0) = 0$ and $a(1) = 1$. 
This contradicts \Cref{prop_gen_orient_consistency}. Thus, our assumption that $\addcross(\mathcal{D}(n,n+1), R)< \frac{2}{3}n-2$ does not hold. Therefore, during the sequence, at least one diagram has at least $2n^2 + \frac{2}{3}n$ crossings.
\end{proof}

\section{Other families of split links.}\label{appendix_other}
We specified our diagrams $\mathcal{D}_{p,q}$ with $p = n$ and $q = n+1$, but our proof can easily be adapted to handle any coprime $p,q$ and prove that $\mathcal{D}_{p,q}$ is a hard split link. However, our proof provides a lower bound for $\unlink(\mathcal{D}_{p,q})$ that only depends on $\min (p,q)$ while the number of crossings of the diagram is larger than $pq - \min (p,q)$. Hence, the lower bound on crossing-complexity depending on the number of crossings in the initial diagram is highest possible on the diagrams $\mathcal{D}(n,n+1)$.

\Cref{pic_entrelacs_4} shows a similar family of links for which our arguments also apply. In particular, this family of diagrams also contains hard split links with arbitrarily large crossing-complexity.

\begin{figure}[ht]
\begin{center}
\includegraphics[scale=1]{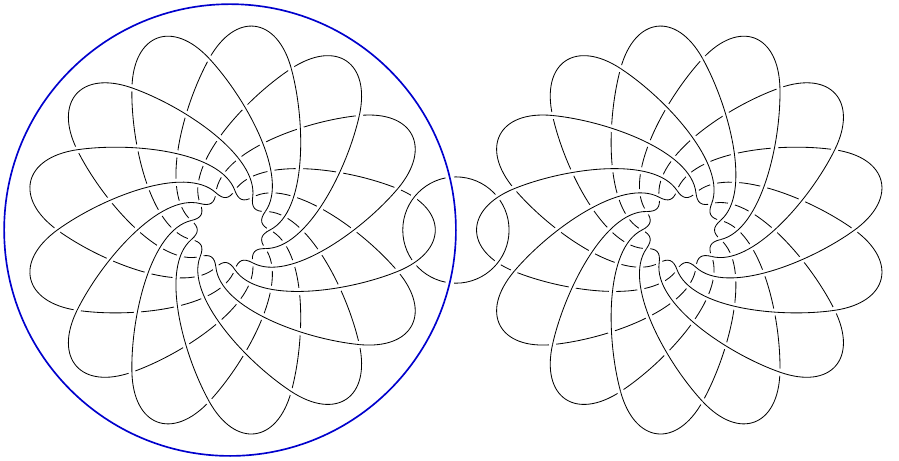}
\caption{Another construction where splitting the blue unknot requires a super-constant number of additional crossings.}
\label{pic_entrelacs_4}
\end{center}
\end{figure}

\subparagraph*{Acknowledgements.}
We would like to thank Cl\'ement Maria for helpful discussions and 
Stephan Tillmann for his comments and suggestions on an earlier version of the paper. 
Moreover, the authors want to thank the anonymous referees for their helpful comments.
Corentin Lunel was partially supported by the ANR project ANR-20-CE48-0007 (AlgoKnot) and was a PhD student at Université Gustave Eiffel for a significant part of this project. 
Jonathan Spreer was supported by the Australian Research Council under the Discovery Project scheme (grant number DP220102588).

\bibliographystyle{plainurl}
\bibliography{biblio}

\appendix

\section{Details of section \ref{sec_homo_iso}}
\label{sec_chambers_appendix}

In this section, we reprove \Cref{prop_iso_U} by following and detailing when necessary the proof of \Cref{Chambers_homo-isotopy} made in \cite[Section 2]{Chambers_homo-isotopy}. We try to follow their notations as much as possible. 

\begin{figure}[ht]
\begin{center}
\begin{tikzpicture}[scale = 0.8]
\draw (0,0) -- +(2,2) (2,0) -- +(-2,2);
\draw [dashed, opacity = 0.5] (1,1) circle (0.5cm);

\draw [-Stealth] (2,1) -- +(1,0);
\draw [-Stealth] (0,1) -- +(-1,0);
\path [opacity = 0.5] (1,1) -- +(0,-0.5) node [below] {$\epsilon$};  

\begin{scope}[xshift = 3cm]
\draw [dashed, opacity = 0.5] (1,1) circle (0.5cm);
\draw (0,0) -- +(0.5,0.5) .. controls (1,1) and (1,1) .. (0.5,1.5) -- (0,2);
\draw (2,0) -- +(-0.5,0.5) .. controls (1,1) and (1,1) .. (1.5,1.5) -- (2,2);
\end{scope}

\begin{scope}[xshift = -3cm]
\draw [dashed, opacity = 0.5] (1,1) circle (0.5cm);
\draw (0,0) -- +(0.5,0.5) .. controls (1,1) and (1,1) .. (1.5,0.5) -- (2,0);
\draw (0,2) -- +(0.5,-0.5) .. controls (1,1) and (1,1) .. (1.5,1.5) -- (2,2);
\end{scope}
\end{tikzpicture}
\caption{The two possible resolutions of a crossings on a ball of radius $\epsilon$.}
\label{pic_reso_def}
\end{center}
\end{figure}

We start with repeating the main ingredients of the proof of \Cref{Chambers_homo-isotopy}. We consider a closed curve $\gamma$ being homotoped between $\gamma_0$ and $\gamma_1$, and we denote the curves of the homotopy by $\gamma_t$ for $t\in [0,1]$. Similarly to the Reidemeister theorem and as described in \cite[Proposition 2.1]{Chambers_homo-isotopy}, such a homotopy can be discretised using projections of Reidemeister moves, which are denoted by R1, R2 and R3 and pictured on the left of \Cref{pic_resolution_moves}. We fix some small $\epsilon$ throughout the proof, and for any time $t_j$, we denote by $\{ G^j_i \}_i$ the set of connected resolutions of $\gamma_{t_j}$ on balls of radii $\epsilon$ (see \Cref{pic_reso_def} for the two possible resolutions of a crossing on a ball of radius $\epsilon$). By definition, all of these curves are $\epsilon$-image equivalent to $\gamma_{t_j}$. The proof of \Cref{Chambers_homo-isotopy} relies on proving that there exists a path between $\gamma_0$ and some $\overline{\gamma_1}$ within a certain graph $\Gamma$ containing resolutions of $\gamma$.

\begin{figure}[ht]
\begin{center}
\begin{tikzpicture}[scale = 0.9]
\def\e{0.05} \def\me{0.8} \def\ie{0.05}
\clip (-0.1,2.1) rectangle (15.6,-7.6);
\node  [circle, inner sep = \e cm] (n1) at (1.75,1) {};
\draw (0,0) -- +(0,2);
\draw (1.5,0) .. controls +(90:\me) and \control{(n1.south west)}{(-135:0.15)} -- (n1.north east) .. controls +(45:0.75) and \control{(n1.south east)}{(-45:0.75)} -- (n1.north west) .. controls +(135:0.15) and \control{(1.5,2)}{(-90:\me)};
\draw [Stealth-Stealth] (0.25,1) -- (1.25,1) node [midway, above] {R1};

\begin{scope}[xshift=7.5cm]
\node  [circle, inner sep = \e cm] (n1) at (1.75,1) {};
\draw (0,0) -- +(0,2);
\draw (1.5,0) .. controls +(90:\me) and \control{(n1.south west)}{(-135:0.15)} .. controls +(45:\ie) and \control{(n1.south east)}{(135:\ie)} .. controls +(-45:0.75) and \control{(n1.north east)}{(45:0.75)} .. controls +(-135:\ie) and \control{(n1.north west)}{(-45:\ie)} .. controls +(135:0.15) and \control{(1.5,2)}{(-90:\me)};
\draw [Stealth-Stealth] (0.25,1) -- (1.25,1) node [midway, above] {M1};
\end{scope}

\begin{scope}[yshift=-2.5cm]
\node  [circle, inner sep = \e cm] (n1) at (3,0.6) {};
\node  [circle, inner sep = \e cm] (n2) at (3,1.4) {};

\draw (0,0) -- ++(0,2);
\draw (1,0) -- ++(0,2);
\draw (2.5,0) .. controls +(90:0.15) and \control{(n1.south west)}{(-135:0.25)} -- (n1.north east) .. controls +(45:0.25) and \control{(n2.south east)}{(-45:0.25)} -- (n2.north west) .. controls +(135:0.25) and \control{(2.5,2)}{(-90:0.15)}; 
\draw (3.5,0) .. controls +(90:0.15) and \control{(n1.south east)}{(-45:0.25)} -- (n1.north west) .. controls +(135:0.25) and \control{(n2.south west)}{(-135:0.25)} -- (n2.north east) .. controls +(45:0.25) and \control{(3.5,2)}{(-90:0.15)}; 
\draw [Stealth-Stealth] (1.25,1) -- (2.25,1) node [midway, above] {R2};

\begin{scope}[xshift=7.5cm]
\node  [circle, inner sep = \e cm] (n1) at (3,0.6) {};
\node  [circle, inner sep = \e cm] (n2) at (3,1.4) {};

\draw (0,0) -- ++(0,2);
\draw (1,0) -- ++(0,2);
\draw (2.5,0) .. controls +(90:0.15) and \control{(n1.south west)}{(-135:0.25)}  .. controls +(45:\ie) and \control{(n1.north west)}{(-45:\ie)} .. controls +(135:0.25) and \control{(n2.south west)}{(-135:0.25)} .. controls +(45:\ie) and \control{(n2.north west)}{(-45:\ie)} .. controls +(135:0.25) and \control{(2.5,2)}{(-90:0.15)}; 
\draw (3.5,0) .. controls +(90:0.15) and \control{(n1.south east)}{(-45:0.25)}  .. controls +(135:\ie) and \control{(n1.north east)}{(-135:\ie)} .. controls +(45:0.25) and \control{(n2.south east)}{(-45:0.25)} .. controls +(135:\ie) and \control{(n2.north east)}{(-135:\ie)} .. controls +(45:0.25) and \control{(3.5,2)}{(-90:0.15)}; 
\draw [Stealth-Stealth] (1.25,1) -- (2.25,1) node [midway, above] {M2a};
\end{scope}

\begin{scope}[xshift=12cm]
\node [circle, inner sep = \e cm] (n1) at (3,0.6) {};
\node [circle, inner sep = \e cm] (n2) at (3,1.4) {};
\node [circle, inner sep = \e cm] (n3) at (0.5,0.6) {};
\node [circle, inner sep = \e cm] (n4) at (0.5,1.4) {};

\draw (0,0) .. controls +(90:0.15) and \control{(n3.south west)}{(-135:0.25)}  .. controls +(45:\ie) and \control{(n3.north west)}{(-45:\ie)} .. controls +(135:0.25) and \control{(n4.south west)}{(-135:0.25)} .. controls +(45:\ie) and \control{(n4.south east)}{(135:\ie)} .. controls +(-45:0.25) and \control{(n3.north east)}{(45:0.25)} .. controls +(-135:\ie) and \control{(n3.south east)}{(135:\ie)} .. controls +(-45:0.25) and \control{(1,0)}{(90:0.15)};
\draw (0,2) .. controls +(-90:0.15) and \control{(n4.north west)}{(135:0.25)}  .. controls +(-45:\ie) and \control{(n4.north east)}{(-135:\ie)} .. controls +(45:0.25) and \control{(1,2)}{(-90:0.15)}; 

\draw (2.5,2) .. controls +(-90:0.15) and \control{(n2.north west)}{(135:0.25)}  .. controls +(-45:\ie) and \control{(n2.south west)}{(45:\ie)} .. controls +(-135:0.25) and \control{(n1.north west)}{(135:0.25)} .. controls +(-45:\ie) and \control{(n1.north east)}{(-135:\ie)} .. controls +(45:0.25) and \control{(n2.south east)}{(-45:0.25)} .. controls +(135:\ie) and \control{(n2.north east)}{(-135:\ie)} .. controls +(45:0.25) and \control{(3.5,2)}{(-90:0.15)};
\draw (2.5,0) .. controls +(90:0.15) and \control{(n1.south west)}{(-135:0.25)}  .. controls +(45:\ie) and \control{(n1.south east)}{(135:\ie)} .. controls +(-45:0.25) and \control{(3.5,0)}{(90:0.15)}; 

\draw [Stealth-Stealth, blue!80!black] (1.25,1) -- (2.25,1) node [midway, above] {M2b};
\node at (-0.5,1) {\Large +};
\end{scope}
\end{scope}

\begin{scope}[yshift=-6.25cm]
\draw (0,0) -- ++(2,2);
\draw (2,0) -- ++(-2,2);
\draw (0,1.4) -- ++(2,0);

\draw [Stealth-Stealth] (2.25,1) -- (3.25,1) node [midway, above] {R3};

\draw (3.5,0) -- ++(2,2);
\draw (5.5,0) -- ++(-2,2);
\draw (3.5,0.6) -- ++(2,0);

\def\ee{0.075}
\begin{scope}[xshift=7.5cm, yshift = 1.25cm]
\node (n1) at (0.6,1.4) [circle, inner sep = \e cm] {};
\node (n2) at (1,1) [circle, inner sep = \e cm] {};
\node (n3) at (1.4,1.4) [circle, inner sep = \ee cm] {};

\draw (0,1.4) -- (n1.west) .. controls +(0:\ie) and \control{(n1.north west)}{(-45:\ie)} -- (0,2); 
\draw (0,0) -- (n2.south west) .. controls +(45:\ie) and \control{(n2.north west)}{(-45:\ie)} -- (n1.south east).. controls +(135:\ie) and \control{(n1.east)}{(180:\ie)} -- (n3.west) .. controls +(0:\ie) and \control{(n3.north east)}{(-135:\ie)} -- (2,2); 
\draw (2,0) -- (n2.south east) .. controls +(135:\ie) and \control{(n2.north east)}{(-135:\ie)} -- (n3.south west) .. controls +(45:\ie) and \control{(n3.east)}{(180:\ie)} -- (2,1.4);

\node (n1) at (4.1,0.6) [circle, inner sep = \ee cm] {};
\node (n2) at (4.5,1) [circle, inner sep = \e cm] {};
\node (n3) at (4.9,0.6) [circle, inner sep = \e cm] {};

\draw (5.5,0) -- (n3.south east) .. controls +(135:\ie) and \control{(n3.east)}{(180:\ie)} -- (5.5,0.6); 
\draw (3.5,0) -- (n1.south west) .. controls +(45:\ie) and \control{(n1.east)}{(180:\ie)} -- (n3.west) .. controls +(0:\ie) and \control{(n3.north west)}{(-45:\ie)} -- (n2.south east) .. controls +(135:\ie) and \control{(n2.north east)}{(-135:\ie)} -- (5.5,2); 
\draw (3.5,2) -- (n2.north west) .. controls +(-45:\ie) and \control{(n2.south west)}{(45:\ie)} -- (n1.north east) .. controls +(-135:\ie) and \control{(n1.west)}{(0:\ie)} -- (3.5,0.6); 
\draw [Stealth-Stealth] (2.25,1) -- (3.25,1) node [midway, above] {M3a};
\end{scope}

\begin{scope}[xshift=7.5cm, yshift = -1.25cm]
\node (n1) at (0.6,1.4) [circle, inner sep = \ee cm] {};
\node (n2) at (1,1) [circle, inner sep = \e cm] {};
\node (n3) at (1.4,1.4) [circle, inner sep = \ee cm] {};

\draw (0,1.4) -- (n1.west) .. controls +(0:\ie) and \control{(n1.south east)}{(135:\ie)} -- (n2.north west) .. controls +(-45:\ie) and \control{(n2.south west)}{(45:\ie)} -- (0,0); 
\draw (0,2) -- (n1.north west) .. controls +(-45:\ie) and \control{(n1.east)}{(180:\ie)} -- (n3.west) .. controls +(0:\ie) and \control{(n3.north east)}{(-135:\ie)} -- (2,2); 
\draw (2,0) -- (n2.south east) .. controls +(135:\ie) and \control{(n2.north east)}{(-135:\ie)} -- (n3.south west) .. controls +(45:\ie) and \control{(n3.east)}{(180:\ie)} -- (2,1.4); 

\node (n1) at (4.1,0.6) [circle, inner sep = \e cm] {};
\node (n2) at (4.5,1) [circle, inner sep = \e cm] {};
\node (n3) at (4.9,0.6) [circle, inner sep = \e cm] {};

\draw (3.5,0.6) -- (n1.west) .. controls +(0:\ie) and \control{(n1.south west)}{(45:\ie)} -- (3.5,0);
\draw (3.5,2) -- (n2.north west) .. controls +(-45:\ie) and \control{(n2.south west)}{(45:\ie)} -- (n1.north east) .. controls +(-135:\ie) and \control{(n1.east)}{(180:\ie)} -- (n3.west) .. controls +(0:\ie) and \control{(n3.north west)}{(-45:\ie)} -- (n2.south east) .. controls +(135:\ie) and \control{(n2.north east)}{(-135:\ie)} -- (5.5,2);
\draw (5.5,0.6) -- (n3.east) .. controls +(180:\ie) and \control{(n3.south east)}{(135:\ie)} -- (5.5,0);
\draw [Stealth-Stealth] (2.25,1) -- (3.25,1) node [midway, above] {M3b};
\end{scope}
\end{scope}

\draw [->, gray] (3,1) -- +(4,0);
\draw [->, gray] (4,-1.5) -- +(3,0);
\draw [->, gray] (6,-5) -- +(0.75,0.5);
\draw [->, gray] (6,-6) -- +(0.75,-0.5);
\end{tikzpicture}
\caption{Resolution moves.}
\label{pic_resolution_moves}
\end{center}
\end{figure}

The \textbf{graph of resolutions} $\Gamma$ is defined as follows. Let $\{ t_j \}_j$ be a family of times $t_j \in [0,1]$ alternating with critical times of the homotopy on $\gamma$. The set of vertices of $\Gamma$ is partitioned into $j$ layers, and for each layer $j$ there is a vertex for each resolution in $\{ G^j_i \}_i$. Between two times $t_j$ and $t_{j+1}$ there is exactly one critical time, which corresponds to a move R1, R2 and R3 on the curve $\gamma$. We put edges in $\Gamma$ between vertices in two consecutive layers $j$ and $j+1$ according to the following rules:
\begin{itemize}
\item If $\gamma_{t_j}$ and $\gamma_{t_{j+1}}$ differ by a move R1, then we put an edge between two resolutions $G^j_i$ and $G^{j+1}_{i'}$ whenever they differ by the move M1 pictured in \Cref{pic_resolution_moves}, top.

\item If $\gamma_{t_j}$ and $\gamma_{t_{j+1}}$ differ by a move R2, then we put an edge between two resolutions $G^j_i$ and $G^{j+1}_{i'}$ whenever they differ by the move M2a pictured in \Cref{pic_resolution_moves}, middle. 

\item If $\gamma_{t_j}$ and $\gamma_{t_{j+1}}$ differ by a move R3, then we put an edge between two resolutions $G^j_i$ and $G^{j+1}_{i'}$ whenever they differ by the moves M3a or M3b pictured in \Cref{pic_resolution_moves}, bottom.
  \end{itemize}

Additionally, we also add edges between vertices in a common layer $j$ according to the following rule:

\begin{itemize}
\item If $t_j$ follows or precedes an R2 move, we put an edge between two resolutions $G^j_i$ and $G^{j}_{i'}$ whenever they differ by the move M2b in \Cref{pic_graph_ex}, middle right.
\end{itemize}

Note that in this last case, the move M2a cannot be applied to $G^j_i$ and $G^j_{i'}$ since the resolutions around the two double points are not compatible. An example of a graph of resolutions $\Gamma$ is pictured in \Cref{pic_graph_ex}.

First, notice that by construction, two resolution curves $G^j_i$ and $G^{j'}_{i'}$ connected by an edge in $\Gamma$ are isotopic, and the isotopy has the property that the isotopy curves are all $\epsilon$-equivalent to curves $\gamma_{t}$ for $t$ in $[j,j']$. Note that this is also the case for $j=j'$ and the move M3b.

Furthermore, the degrees in $\Gamma$ are very constrained: when a $\gamma_{t_j}$ transforms to $\gamma_{t_{j+1}}$ via a move R1, then every resolution $G^j_i$ is incident to exactly one resolution $G^{j+1}_{i'}$ via a move M1. For a move R3, each resolution $G^j_i$ is incident to either exactly one resolution $G^{j+1}_{i'}$ via a move M3a or M3b, or to exactly three resolutions $G^{j+1}_{i_1}$, $G^{j+1}_{i_2}$, and $G^{j+1}_{i_3}$ due to the three-fold symmetry of the latter move. Finally, a move R2 between layers $j$ and $j+1$ induces for each resolution $G^j_i$ either an incidence to some $G^{j+1}_{i'}$ with a move M2a or to some  $G^j_{i'}$ with a move M2b. Therefore, by construction, each vertex outside of the first and last layers is incident to an even number of edges: either $2$, $4$, or $6$. We refer again to \Cref{pic_graph_ex} for an illustration of this behaviour.

\begin{figure}[ht]
\begin{center}
\begin{tikzpicture}[scale = 0.65]
\def\e{0.05} \def\se{0.3} \def\me{0.8} \def\le{1.5} \def\lle{2}

\clip (-1.25,3.75) rectangle (20,-14.75);
\begin{scope}[yshift = 3cm]
\draw (0.5,0) circle (1.25cm and 0.55cm);
\end{scope}

\coordinate (c0) at (0,0);
\coordinate (c1) at (1,0);
\foreach \i in {0,...,1}{\node (n\i) [circle, inner sep = \e cm] at (c\i) {};}

\draw (n0.west) .. controls +(180:\le) and \control{(n0.south)}{(-90:\le)} -- (n0.north) .. controls +(90:\me) and \control{(n1.north)}{(90:\me)} -- (n1.south) .. controls +(-90:\le) and \control{(n1.east)}{(0:\le)} -- (n1.west) -- (n0.west);

\begin{scope}[yshift = -3cm]
\coordinate (c0) at (0,0);
\coordinate (c1) at (1,0);
\coordinate (c2) at (0.5,-0.8);
\coordinate (c3) at (0.5,-1.8);
\foreach \i in {0,1,2,3}{\node (n\i) [circle, inner sep = \e cm] at (c\i) {};}

\draw (n0.west) .. controls +(180:\le) and \control{(n3.south west)}{(-135:\lle)} -- (n3.north east) .. controls +(45:\se) and \control{(n2.south east)}{(-45:\se)} -- (n2.north west) .. controls +(135:\se) and \control{(n0.south)}{(-90:\se)} -- (n0.north) .. controls +(90:\me) and \control{(n1.north)}{(90:\me)} -- (n1.south) .. controls +(-90:\se) and \control{(n2.north east)}{(45:\se)} -- (n2.south west) .. controls +(-135:\se) and \control{(n3.north west)}{(135:\se)} -- (n3.south east) .. controls + (-45:\lle) and \control{(n1.east)}{(0:\le)} -- (n0.west);
\end{scope}

\begin{scope}[yshift = -8cm]
\coordinate (c0) at (0.5,0.3);
\coordinate (c1) at (0,-0.8);
\coordinate (c2) at (1,-0.8);
\coordinate (c3) at (0.5,-1.8);
\foreach \i in {0,1,2,3}{\node (n\i) [circle, inner sep = \e cm] at (c\i) {};}

\draw (n0.north west) .. controls +(135:\lle) and \control{(n0.north east)}{(45:\lle)} -- (n0.south west) .. controls +(-135:\se) and \control{(n1.north)}{(90:\se)} -- (n1.south) .. controls +(-90:\se) and \control{(n3.north west)}{(135:\se)} -- (n3.south east) .. controls +(-45:\le) and \control{(n2.east)}{(0:\le)} -- (n1.west) .. controls +(180:\le) and \control{(n3.south west)}{(-135:\le)} -- (n3.north east) .. controls +(45:\se) and \control{(n2.south)}{(-90:\se)} -- (n2.north) .. controls +(90:\se) and \control{(n0.south east)}{(-45:\se)} -- (n0.north west);
\end{scope}

\begin{scope}[xshift = 0.5cm, yshift = -12.5cm]
\coordinate (c0) at (0,0);
\coordinate (c1) at (-0.5,-0.8);
\coordinate (c2) at (-0.5,-2);
\foreach \i in {0,1}{\node (n\i) [circle, inner sep = \e cm] at (c\i) {};}

\draw (n0.north west) .. controls +(135:\lle) and \control{(n0.north east)}{(45:\lle)} -- (n0.south west) .. controls +(-135:\se) and \control{(n1.north)}{(90:\se)} -- (n1.south) .. controls +(-90:\le) and \control{(n1.east)}{(0:\le)} -- (n1.west) .. controls +(180:\le) and \control{(c2)}{(180:1.5)} .. controls +(0:\lle) and \control{(n0.south east)}{(-45:\le)} -- (n0.north west);
\end{scope}

\draw [Stealth-Stealth] (0.5,1) -- +(0,1) node [midway, right] {R2};
\draw [Stealth-Stealth] (0.5,-2) -- +(0,1) node [midway, right] {R2};
\draw [Stealth-Stealth] (0.5,-6.25) -- +(0,1) node [midway, right] {R3};
\draw [Stealth-Stealth] (0.5,-11.125) -- +(0,1) node [midway, right] {R2};

\def\ie{0.05}
\begin{scope}[xshift = 11cm]

\begin{scope}[yshift = 3cm]
\draw (0.5,0) circle (1.25cm and 0.55cm);
\end{scope}

\coordinate (c0) at (0,0);
\coordinate (c1) at (1,0);
\foreach \i in {0,...,1}{\node (n\i) [circle, inner sep = \e cm] at (c\i) {};}

\draw (n0.west) .. controls +(180:\le) and \control{(n0.south)}{(-90:\le)} .. controls +(90:\ie) and \control{(n0.east)}{(180:\ie)} -- (n1.west) .. controls +(0:\ie) and \control{(n1.south)}{(90:\ie)} .. controls +(-90:\le) and \control{(n1.east)}{(0:\le)} .. controls +(180:\ie) and \control{(n1.north)}{(-90:\ie)} .. controls +(90:\me) and \control{(n0.north)}{(90:\me)} .. controls +(-90:\ie) and \control{(n0.west)}{(0:\ie)};

\begin{scope}[yshift = -3cm]
\coordinate (c0) at (0,0);
\coordinate (c1) at (1,0);
\coordinate (c2) at (0.5,-0.8);
\coordinate (c3) at (0.5,-1.8);
\foreach \i in {0,1,2,3}{\node (n\i) [circle, inner sep = \e cm] at (c\i) {};}

\draw (n0.west) .. controls +(180:\le) and \control{(n3.south west)}{(-135:\lle)} .. controls +(45:\ie) and \control{(n3.north west)}{(-45:\ie)} .. controls +(135:\se) and \control{(n2.south west)}{(-135:\se)} .. controls +(45:\ie) and \control{(n2.north west)}{(-45:\ie)}.. controls +(135:\se) and \control{(n0.south)}{(-90:\se)} .. controls +(90:\ie) and \control{(n0.east)}{(180:\ie)} -- (n1.west) .. controls +(0:\ie) and \control{(n1.south)}{(90:\ie)}.. controls +(-90:\se) and \control{(n2.north east)}{(45:\se)} .. controls +(-135:\ie) and \control{(n2.south east)}{(135:\ie)} .. controls +(-45:\se) and \control{(n3.north east)}{(45:\se)} .. controls +(-135:\ie) and \control{(n3.south east)}{(135:\ie)} .. controls + (-45:\lle) and \control{(n1.east)}{(0:\le)} .. controls +(180:\ie) and \control{(n1.north)}{(-90:\ie)} .. controls +(90:\me) and \control{(n0.north)}{(90:\me)} .. controls +(-90:\ie) and \control{(n0.west)}{(0:\ie)};
\end{scope}

\begin{scope}[xshift = -7cm, yshift = -3cm]
\coordinate (c0) at (0,0);
\coordinate (c1) at (1,0);
\coordinate (c2) at (0.5,-0.8);
\coordinate (c3) at (0.5,-1.8);
\foreach \i in {0,1,2,3}{\node (n\i) [circle, inner sep = \e cm] at (c\i) {};}

\draw (n0.west) .. controls +(180:\le) and \control{(n3.south west)}{(-135:\lle)} .. controls +(45:\ie) and \control{(n3.south east)}{(135:\ie)} .. controls + (-45:\lle) and \control{(n1.east)}{(0:\le)} .. controls +(180:\ie) and \control{(n1.south)}{(90:\ie)} .. controls +(-90:\se) and \control{(n2.north east)}{(45:\se)} .. controls +(-135:\ie) and \control{(n2.south east)}{(135:\ie)} .. controls +(-45:\se) and \control{(n3.north east)}{(45:\se)} .. controls +(-135:\ie) and \control{(n3.north west)}{(-45:\ie)} .. controls +(135:\se) and \control{(n2.south west)}{(-135:\se)} .. controls +(45:\ie) and \control{(n2.north west)}{(-45:\ie)} .. controls +(135:\se) and \control{(n0.south)}{(-90:\se)} .. controls +(90:\ie) and \control{(n0.east)}{(180:\ie)} -- (n1.west) .. controls +(0:\ie) and \control{(n1.north)}{(-90:\ie)} .. controls +(90:\me) and \control{(n0.north)}{(90:\me)} .. controls +(-90:\ie) and \control{(n0.west)}{(0:\ie)};
\end{scope}

\begin{scope}[xshift = -3.5cm, yshift = -3cm]
\coordinate (c0) at (0,0);
\coordinate (c1) at (1,0);
\coordinate (c2) at (0.5,-0.8);
\coordinate (c3) at (0.5,-1.8);
\foreach \i in {0,1,2,3}{\node (n\i) [circle, inner sep = \e cm] at (c\i) {};}

\draw (n0.west) .. controls +(180:\le) and \control{(n3.south west)}{(-135:\lle)} .. controls +(45:\ie) and \control{(n3.north west)}{(-45:\ie)} .. controls +(135:\se) and \control{(n2.south west)}{(-135:\se)} .. controls +(45:\ie) and \control{(n2.south east)}{(135:\ie)} .. controls +(-45:\se) and \control{(n3.north east)}{(45:\se)} .. controls +(-135:\ie) and \control{(n3.south east)}{(135:\ie)} .. controls + (-45:\lle) and \control{(n1.east)}{(0:\le)} .. controls +(180:\ie) and \control{(n1.south)}{(90:\ie)} .. controls +(-90:\se) and \control{(n2.north east)}{(45:\se)} .. controls +(-135:\ie) and \control{(n2.north west)}{(-45:\ie)} .. controls +(135:\se) and \control{(n0.south)}{(-90:\se)} .. controls +(90:\ie) and \control{(n0.east)}{(180:\ie)} -- (n1.west) .. controls +(0:\ie) and \control{(n1.north)}{(-90:\ie)} .. controls +(90:\me) and \control{(n0.north)}{(90:\me)} .. controls +(-90:\ie) and \control{(n0.west)}{(0:\ie)};
\end{scope}

\begin{scope}[xshift = 3.5cm, yshift = -3cm]
\coordinate (c0) at (0,0);
\coordinate (c1) at (1,0);
\coordinate (c2) at (0.5,-0.8);
\coordinate (c3) at (0.5,-1.8);
\foreach \i in {0,1,2,3}{\node (n\i) [circle, inner sep = \e cm] at (c\i) {};}

\draw (n0.west) .. controls +(180:\le) and \control{(n3.south west)}{(-135:\lle)} .. controls +(45:\ie) and \control{(n3.north west)}{(-45:\ie)} .. controls +(135:\se) and \control{(n2.south west)}{(-135:\se)} .. controls +(45:\ie) and \control{(n2.south east)}{(135:\ie)} .. controls +(-45:\se) and \control{(n3.north east)}{(45:\se)} .. controls +(-135:\ie) and \control{(n3.south east)}{(135:\ie)} .. controls + (-45:\lle) and \control{(n1.east)}{(0:\le)} .. controls +(180:\ie) and \control{(n1.north)}{(-90:\ie)} .. controls +(90:\me) and \control{(n0.north)}{(90:\me)} .. controls +(-90:\ie) and \control{(n0.east)}{(180:\ie)} -- (n1.west) .. controls +(0:\ie) and \control{(n1.south)}{(90:\ie)} .. controls +(-90:\se) and \control{(n2.north east)}{(45:\se)} .. controls +(-135:\ie) and \control{(n2.north west)}{(-45:\ie)} .. controls +(135:\se) and \control{(n0.south)}{(-90:\se)} .. controls +(90:\ie) and \control{(n0.west)}{(0:\ie)};
\end{scope}

\begin{scope}[xshift = 7cm, yshift = -3cm]
\coordinate (c0) at (0,0);
\coordinate (c1) at (1,0);
\coordinate (c2) at (0.5,-0.8);
\coordinate (c3) at (0.5,-1.8);
\foreach \i in {0,1,2,3}{\node (n\i) [circle, inner sep = \e cm] at (c\i) {};}

\draw (n0.west) .. controls +(180:\le) and \control{(n3.south west)}{(-135:\lle)} .. controls +(45:\ie) and \control{(n3.south east)}{(135:\ie)} .. controls + (-45:\lle) and \control{(n1.east)}{(0:\le)} .. controls +(180:\ie) and \control{(n1.north)}{(-90:\ie)} .. controls +(90:\me) and \control{(n0.north)}{(90:\me)} .. controls +(-90:\ie) and \control{(n0.east)}{(180:\ie)} -- (n1.west) .. controls +(0:\ie) and \control{(n1.south)}{(90:\ie)} .. controls +(-90:\se) and \control{(n2.north east)}{(45:\se)} .. controls +(-135:\ie) and \control{(n2.south east)}{(135:\ie)} .. controls +(-45:\se) and \control{(n3.north east)}{(45:\se)} .. controls +(-135:\ie) and \control{(n3.north west)}{(-45:\ie)} .. controls +(135:\se) and \control{(n2.south west)}{(-135:\se)} .. controls +(45:\ie) and \control{(n2.north west)}{(-45:\ie)} .. controls +(135:\se) and \control{(n0.south)}{(-90:\se)} .. controls +(90:\ie) and \control{(n0.west)}{(0:\ie)};
\end{scope}

\begin{scope}[xshift = -4cm, yshift = -8cm]
\coordinate (c0) at (0.5,0.3);
\coordinate (c1) at (0,-0.8);
\coordinate (c2) at (1,-0.8);
\coordinate (c3) at (0.5,-1.8);
\foreach \i in {0,1,2,3}{\node (n\i) [circle, inner sep = \e cm] at (c\i) {};}

\draw (n0.north west) .. controls +(135:\lle) and \control{(n0.north east)}{(45:\lle)} .. controls +(-135:\ie) and \control{(n0.south east)}{(135:\ie)} .. controls +(-45:\se) and \control{(n2.north)}{(90:\se)} .. controls +(-90:\ie) and \control{(n2.west)}{(0:\ie)} -- (n1.east) .. controls +(180:\ie) and \control{(n1.south)}{(90:\ie)} .. controls +(-90:\se) and \control{(n3.north west)}{(135:\se)} .. controls +(-45:\ie) and \control{(n3.north east)}{(-135:\ie)} .. controls +(45:\se) and \control{(n2.south)}{(-90:\se)} .. controls +(90:\ie) and \control{(n2.east)}{(180:\ie)} .. controls +(0:\le) and \control{(n3.south east)}{(-45:\le)} .. controls +(135:\ie) and \control{(n3.south west)}{(45:\ie)} .. controls +(-135:\le) and \control{(n1.west)}{(180:\le)} .. controls +(0:\ie) and \control{(n1.north)}{(-90:\ie)} .. controls +(90:\se) and \control{(n0.south west)}{(-135:\se)} .. controls +(45:\ie) and \control{(n0.north west)}{(-45:\ie)};
\end{scope}

\begin{scope}[yshift = -8cm]
\coordinate (c0) at (0.5,0.3);
\coordinate (c1) at (0,-0.8);
\coordinate (c2) at (1,-0.8);
\coordinate (c3) at (0.5,-1.8);
\foreach \i in {0,1,2,3}{\node (n\i) [circle, inner sep = \e cm] at (c\i) {};}

\draw (n0.north west) .. controls +(135:\lle) and \control{(n0.north east)}{(45:\lle)} .. controls +(-135:\ie) and \control{(n0.south east)}{(135:\ie)} .. controls +(-45:\se) and \control{(n2.north)}{(90:\se)} .. controls +(-90:\ie) and \control{(n2.east)}{(180:\ie)} .. controls +(0:\le) and \control{(n3.south east)}{(-45:\le)} .. controls +(135:\ie) and \control{(n3.north east)}{(-135:\ie)} .. controls +(45:\se) and \control{(n2.south)}{(-90:\se)} .. controls +(90:\ie) and \control{(n2.west)}{(0:\ie)} -- (n1.east) .. controls +(180:\ie) and \control{(n1.south)}{(90:\ie)} .. controls +(-90:\se) and \control{(n3.north west)}{(135:\se)} .. controls +(-45:\ie) and \control{(n3.south west)}{(45:\ie)} .. controls +(-135:\le) and \control{(n1.west)}{(180:\le)} .. controls +(0:\ie) and \control{(n1.north)}{(-90:\ie)} .. controls +(90:\se) and \control{(n0.south west)}{(-135:\se)} .. controls +(45:\ie) and \control{(n0.north west)}{(-45:\ie)};
\end{scope}

\begin{scope}[xshift = 4cm, yshift = -8cm]
\coordinate (c0) at (0.5,0.3);
\coordinate (c1) at (0,-0.8);
\coordinate (c2) at (1,-0.8);
\coordinate (c3) at (0.5,-1.8);
\foreach \i in {0,1,2,3}{\node (n\i) [circle, inner sep = \e cm] at (c\i) {};}

\draw (n0.north west) .. controls +(135:\lle) and \control{(n0.north east)}{(45:\lle)} .. controls +(-135:\ie) and \control{(n0.south east)}{(135:\ie)} .. controls +(-45:\se) and \control{(n2.north)}{(90:\se)} .. controls +(-90:\ie) and \control{(n2.east)}{(180:\ie)} .. controls +(0:\le) and \control{(n3.south east)}{(-45:\le)} .. controls +(135:\ie) and \control{(n3.south west)}{(45:\ie)} .. controls +(-135:\le) and \control{(n1.west)}{(180:\le)} .. controls +(0:\ie) and \control{(n1.south)}{(90:\ie)} .. controls +(-90:\se) and \control{(n3.north west)}{(135:\se)} .. controls +(-135:\ie) and \control{(n3.north east)}{(-45:\ie)} .. controls +(45:\se) and \control{(n2.south)}{(-90:\se)} .. controls +(90:\ie) and \control{(n2.west)}{(0:\ie)} -- (n1.east) .. controls +(180:\ie) and \control{(n1.north)}{(-90:\ie)} .. controls +(90:\se) and \control{(n0.south west)}{(-135:\se)} .. controls +(45:\ie) and \control{(n0.north west)}{(-45:\ie)};
\end{scope}

\begin{scope}[xshift = 0.5cm, yshift = -12.5cm]
\coordinate (c0) at (0,0);
\coordinate (c1) at (-0.5,-0.8);
\coordinate (c2) at (-0.5,-2);
\foreach \i in {0,1}{\node (n\i) [circle, inner sep = \e cm] at (c\i) {};}

\draw (n0.north west) .. controls +(135:\lle) and \control{(n0.north east)}{(45:\lle)} .. controls +(-135:\ie) and \control{(n0.south east)}{(135:\ie)} .. controls +(-45:\le) and \control{(c2)}{(0:1.5)} .. controls +(180:\le) and \control{(n1.west)}{(180:\le)} .. controls +(0:\ie) and \control{(n1.south)}{(90:\ie)} .. controls +(-90:\le) and \control{(n1.east)}{(0:\le)} .. controls +(180:\ie) and \control{(n1.north)}{(-90:\ie)} .. controls +(90:\se) and \control{(n0.south west)}{(-135:\se)} .. controls +(45:\ie) and \control{(n0.north west)}{(-45:\ie)};
\end{scope}

\draw [Stealth-Stealth] (0.5,1) -- +(0,1) node [midway, right] {M2a};
\draw [Stealth-Stealth] (0.5,-2) -- +(0,1) node [midway, right] {M2a};
\draw [Stealth-Stealth] (0.5,-6.25) -- +(0,1) node [midway, right] {M3b};
\draw [Stealth-Stealth] (-5.5,-2.75) -- +(1.5,0) node [midway, above] {M2b};
\draw [Stealth-Stealth] (5,-2.75) -- +(1.5,0) node [midway, above] {M2b};
\draw [Stealth-Stealth] (-2.25,-8) -- +(1.5,0) node [midway, above] {M2b};
\draw [Stealth-Stealth] (-5.75,-5.5) -- +(1.5,-1.5) node [midway, above right] {M3a};
\draw [Stealth-Stealth] (-1.75,-5.25) -- +(1.5,-1.5) node [midway, below left] {M3b};
\draw [Stealth-Stealth] (6.75,-5.5) -- +(-1.5,-1.5) node [midway, above left] {M3a};
\draw [Stealth-Stealth] (2.75,-5.25) -- +(-1.5,-1.5) node [midway, below right] {M3b};
\draw [Stealth-Stealth] (3.25,-10) -- +(-2,-1.5) node [midway, below right] {M2a};
\end{scope}
\end{tikzpicture}
\caption{An example of a graph of resolutions taken from \cite[Figure 7]{Chambers_homo-isotopy}.}
\label{pic_graph_ex}
\end{center}
\end{figure}

According to assumptions of \Cref{Chambers_homo-isotopy}, we now assume that $\gamma_0$ is simple so that the first layer $\{ G^0_i \}_i$ has only one vertex $G_0^0$. This vertex has degree one, while all the vertices in the graph outside of the first and last layer have even degree. Summing the degrees of vertices of a connected component of $\Gamma$ yields twice the number of edges of $\Gamma$: an even number. Hence, there is an even number of vertices of odd degrees in each connected component of $\Gamma$, and thus there exists a path in $\Gamma$ from $G^0_0$ to a vertex on the last layer. Gluing together the isotopies between resolutions of $\gamma_t$ corresponding to each edge, we obtain an isotopy from $\overline{\gamma_0}=\gamma_0$ to a curve $\overline{\gamma_1}$ that is $\epsilon$-image equivalent to $\gamma_1$, where each intermediate curve is $\epsilon$-equivalent to curves in $\gamma_t$.  Note that if $\gamma_1$ is a point, it can be replaced in the proof by a simple closed curve within a ball of radius $\epsilon$, that can then be isotoped to a point within the ball. This concludes the proof of \Cref{Chambers_homo-isotopy}.

Let us now complete our proof of \Cref{prop_iso_U}. We want to apply the same proof strategy to the curve $\mathcal{U}_t$ while keeping track of the number of intersections between $\mathcal{U}_t$, and $\mathcal{M}_t$ which serves as a discrete measure of length. The main difference with the previous proof is that this discrete metric evolves during the homotopy of curves $p \circ \phi$. Since this metric only takes integer values, we fix some $\epsilon \in ]0,1[$ that will be irrelevant with respect to the metric and which is only used to define $\epsilon$-image equivalent.

Let $\{ c_j \}_{1\leq j \leq s} \subset [0,1]$ be the critical times involving only $\mathcal{U}_t$. Let $\{ m_j \}_{1\leq j \leq r}$ be the critical times involving $\mathcal{M}_t$. We use the notation $\cross (\cdot, \cdot)$ from \Cref{sec:sweepout} to define the discrete length $ \| \mathcal{U}_t \| = \cross ( \mathcal{U}_t, \mathcal{M}_t )$. Outside of these critical times, we implicitly resolve all curves $\mathcal{U}_t$ with $\epsilon '$-image equivalent curves where $\epsilon ' \leq \epsilon$ so that all balls of radius $\epsilon '$ centred at crossings of $\mathcal{U}_t$ are disjoint from $\mathcal{M}_t$. Therefore, for $t_\ell$ disjoint from $\{ c_j \}_{1\leq j \leq s}$, the length of each curve in $\{ G^\ell_i\}_i$ is $\| \mathcal{U}_{t_\ell} \|$. We now define an increasing family of times $\{ t_j \}_{0 \leq j \leq s}$ satisfying $t_0 = 0$, $t_1 = 1$, and for $0 < j < s$, $ c_j < t_{j} < c_{j+1} $ and there are no $m_i$ between $c_j$ and $t_{j}$ (see \Cref{pic_def_times}). The graph $\Gamma$ is defined as previously by connecting resolutions of $\mathcal{U}_{t_j}$ between these times $t_j$.

\begin{figure}[ht]
\begin{center}
\begin{tikzpicture}[scale = 0.9]
\draw (-0.5,0) -- (10.5,0);
\draw (0,0) -- +(0,-0.3) node [below] {$0$};
\draw (10,0) -- +(0,-0.3) node [below] {$1$};

\begin{scope}[red!80!black]
\draw (1,0) -- +(0,-0.2) node [below] {$c_1$};
\draw (2.5,0) -- +(0,-0.2) node [below] {$c_2$};
\draw (4.5,0) -- +(0,-0.2) node [below] {$c_3$};
\draw (5.2,0) -- +(0,-0.2) node [below] {$c_4$};
\draw (9,0) -- +(0,-0.2) node [below] {$c_q$};
\path (6.9,0) -- +(0,-0.2) node [below] {$\cdot$};
\path (7.2,0) -- +(0,-0.2) node [below] {$\cdot$};
\path (7.5,0) -- +(0,-0.2) node [below] {$\cdot$};
\end{scope}

\begin{scope}[blue!80!black]
\draw (1.65,0) -- +(0,-0.1) node [below] {$m_1$};
\draw (3,0) -- +(0,-0.1) node [below] {$m_2$};
\path (3.4,0) -- +(0,-0.1) node [below] {\small $\cdot$};
\path (3.5,0) -- +(0,-0.1) node [below] {\small $\cdot$};
\path (3.6,0) -- +(0,-0.1) node [below] {\small $\cdot$};
\draw (4,0) -- +(0,-0.1) node [below] {$m_5$};
\draw (6,0) -- +(0,-0.1) node [below] {$m_5$};
\path (6.3,0) -- +(0,-0.1) node [below] {\small $\cdot$};
\path (6.4,0) -- +(0,-0.1) node [below] {\small $\cdot$};
\path (6.5,0) -- +(0,-0.1) node [below] {\small $\cdot$};
\draw (9.5,0) -- +(0,-0.1) node [below] {$m_r$};
\end{scope}

\begin{scope}[gray]
\path (6.9,0) -- +(0,0.2) node [above] {$\cdot$};
\path (7.2,0) -- +(0,0.2) node [above] {$\cdot$};
\path (7.5,0) -- +(0,0.2) node [above] {$\cdot$};
\draw (0,0) -- +(0,0.2) node [above] {$t_0$};
\draw (1.2,0) -- +(0,0.2) node [above] {$t_1$};
\draw (2.7,0) -- +(0,0.2) node [above] {$t_3$};
\draw (4.7,0) -- +(0,0.2) node [above] {$t_4$};
\draw (5.35,0) -- +(0,0.2) node [above] {$t_5$};
\draw (10,0) -- +(0,0.2) node [above] {$t_{s}$};
\end{scope}
\end{tikzpicture}
\caption{Definitions of the $t_j$.}
\label{pic_def_times}
\end{center}
\end{figure}

To conclude the proof, we proceed as for the proof of \Cref{Chambers_homo-isotopy} except that we additionally carry the transformations of $\mathcal{M}_t$ and perform them when the corresponding vertical edges are taken in $\Gamma$. At the level of curves, this means that when we go backwards in time following the path going through $\Gamma$, we also reverse the Reidemeister moves R1, R2, and R3 that were performed on $\mathcal{M}_t$, so that the discrete metric evolves alongside the isotopy of $\mathcal{U}_t$ that we are creating. This ensures that the length of a resolution $||G^j_i||$ is never larger than $\cross(\mathcal{U}_{t_j},\mathcal{M}_{t_j})$ and thus that we can apply the isotopies specified by the edges of $\Gamma$ while staying within the length budget. This step is illustrated in \Cref{pic_ex_graph_times}, which is essentially the combination of \Cref{pic_graph_ex} and \Cref{pic_def_times}. 

Following the hypotheses of \Cref{prop_iso_U}, we assume that for all $t \in [0,1]$, $\| \mathcal{U}_t \| \leq m$ where $m$ is a fixed integer. It follows that for $t \in [0,1]$ and all $i,j$, $\| G^j_i \| = \cross (G^j_i , \mathcal{M}_t) \leq m$, since $\mathcal{U}_{t_j}$ and $G^j_i$ are $\epsilon$-image equivalent. By construction, the length of curves does not change on the horizontal edges (those happen between changes in the number of intersections between $\mathcal{U}_t$ and $\mathcal{M}_t$). Hence, the path in $\Gamma$ yields an isotopy $h$ in $\Sp^2$ of $\mathcal{U}$ transforming $\mathcal{U}_0$ into $\mathcal{U}_1$ such that for all $t$, the isotopy at time $t$ is a resolution of some $\mathcal{U}_{t'}$ which is $\epsilon$-image equivalent to $ \mathcal{U}_{t'}$ and thus has less than $m$ intersections with $\mathcal{M}_{t'}$. 

Note that we did not modify the crossings of $\mathcal{M}_t$ and thus we preserve the decorations from the diagram $\mathcal{M}_t$ on these crossings. Therefore the diagrams $\mathcal{M}_t$ can be lifted to an isotopy $\phi' : \Sp^3 \times [0,1] \rightarrow \Sp^3$ satisfying the properties of \Cref{prop_iso_U}.

\begin{figure}[ht]
\begin{center}
\begin{tikzpicture}[scale = 0.65]
\def\e{0.05} \def\se{0.3} \def\me{0.8} \def\le{1.5} \def\lle{2}

\clip (-2.8,3.75) rectangle (20.1,-14.75);
\begin{scope}[yshift = 3cm]
\draw (0.5,0) circle (1.25cm and 0.55cm);
\end{scope}

\coordinate (c0) at (0,0);
\coordinate (c1) at (1,0);
\foreach \i in {0,...,1}{\node (n\i) [circle, inner sep = \e cm] at (c\i) {};}

\draw (n0.west) .. controls +(180:\le) and \control{(n0.south)}{(-90:\le)} -- (n0.north) .. controls +(90:\me) and \control{(n1.north)}{(90:\me)} -- (n1.south) .. controls +(-90:\le) and \control{(n1.east)}{(0:\le)} -- (n1.west) -- (n0.west);

\begin{scope}[yshift = -3cm]
\coordinate (c0) at (0,0);
\coordinate (c1) at (1,0);
\coordinate (c2) at (0.5,-0.8);
\coordinate (c3) at (0.5,-1.8);
\foreach \i in {0,1,2,3}{\node (n\i) [circle, inner sep = \e cm] at (c\i) {};}

\draw (n0.west) .. controls +(180:\le) and \control{(n3.south west)}{(-135:\lle)} -- (n3.north east) .. controls +(45:\se) and \control{(n2.south east)}{(-45:\se)} -- (n2.north west) .. controls +(135:\se) and \control{(n0.south)}{(-90:\se)} -- (n0.north) .. controls +(90:\me) and \control{(n1.north)}{(90:\me)} -- (n1.south) .. controls +(-90:\se) and \control{(n2.north east)}{(45:\se)} -- (n2.south west) .. controls +(-135:\se) and \control{(n3.north west)}{(135:\se)} -- (n3.south east) .. controls + (-45:\lle) and \control{(n1.east)}{(0:\le)} -- (n0.west);
\end{scope}

\begin{scope}[yshift = -8cm]
\coordinate (c0) at (0.5,0.3);
\coordinate (c1) at (0,-0.8);
\coordinate (c2) at (1,-0.8);
\coordinate (c3) at (0.5,-1.8);
\foreach \i in {0,1,2,3}{\node (n\i) [circle, inner sep = \e cm] at (c\i) {};}

\draw (n0.north west) .. controls +(135:\lle) and \control{(n0.north east)}{(45:\lle)} -- (n0.south west) .. controls +(-135:\se) and \control{(n1.north)}{(90:\se)} -- (n1.south) .. controls +(-90:\se) and \control{(n3.north west)}{(135:\se)} -- (n3.south east) .. controls +(-45:\le) and \control{(n2.east)}{(0:\le)} -- (n1.west) .. controls +(180:\le) and \control{(n3.south west)}{(-135:\le)} -- (n3.north east) .. controls +(45:\se) and \control{(n2.south)}{(-90:\se)} -- (n2.north) .. controls +(90:\se) and \control{(n0.south east)}{(-45:\se)} -- (n0.north west);
\end{scope}

\begin{scope}[xshift = 0.5cm, yshift = -12.5cm]
\coordinate (c0) at (0,0);
\coordinate (c1) at (-0.5,-0.8);
\coordinate (c2) at (-0.5,-2);
\foreach \i in {0,1}{\node (n\i) [circle, inner sep = \e cm] at (c\i) {};}

\draw (n0.north west) .. controls +(135:\lle) and \control{(n0.north east)}{(45:\lle)} -- (n0.south west) .. controls +(-135:\se) and \control{(n1.north)}{(90:\se)} -- (n1.south) .. controls +(-90:\le) and \control{(n1.east)}{(0:\le)} -- (n1.west) .. controls +(180:\le) and \control{(c2)}{(180:1.5)} .. controls +(0:\lle) and \control{(n0.south east)}{(-45:\le)} -- (n0.north west);
\end{scope}

\draw [Stealth-Stealth] (0.5,1) -- +(0,1) node [midway, right] {R2};
\draw [Stealth-Stealth] (0.5,-2) -- +(0,1) node [midway, right] {R2};
\draw [Stealth-Stealth] (0.5,-6.25) -- +(0,1) node [midway, right] {R3};
\draw [Stealth-Stealth] (0.5,-11.125) -- +(0,1) node [midway, right] {R2};

\def\ie{0.05}
\begin{scope}[xshift = 11cm]

\begin{scope}[yshift = 3cm]
\draw (0.5,0) circle (1.25cm and 0.55cm);
\end{scope}

\coordinate (c0) at (0,0);
\coordinate (c1) at (1,0);
\foreach \i in {0,...,1}{\node (n\i) [circle, inner sep = \e cm] at (c\i) {};}

\draw (n0.west) .. controls +(180:\le) and \control{(n0.south)}{(-90:\le)} .. controls +(90:\ie) and \control{(n0.east)}{(180:\ie)} -- (n1.west) .. controls +(0:\ie) and \control{(n1.south)}{(90:\ie)} .. controls +(-90:\le) and \control{(n1.east)}{(0:\le)} .. controls +(180:\ie) and \control{(n1.north)}{(-90:\ie)} .. controls +(90:\me) and \control{(n0.north)}{(90:\me)} .. controls +(-90:\ie) and \control{(n0.west)}{(0:\ie)};

\begin{scope}[yshift = -3cm]
\coordinate (c0) at (0,0);
\coordinate (c1) at (1,0);
\coordinate (c2) at (0.5,-0.8);
\coordinate (c3) at (0.5,-1.8);
\foreach \i in {0,1,2,3}{\node (n\i) [circle, inner sep = \e cm] at (c\i) {};}

\draw (n0.west) .. controls +(180:\le) and \control{(n3.south west)}{(-135:\lle)} .. controls +(45:\ie) and \control{(n3.north west)}{(-45:\ie)} .. controls +(135:\se) and \control{(n2.south west)}{(-135:\se)} .. controls +(45:\ie) and \control{(n2.north west)}{(-45:\ie)}.. controls +(135:\se) and \control{(n0.south)}{(-90:\se)} .. controls +(90:\ie) and \control{(n0.east)}{(180:\ie)} -- (n1.west) .. controls +(0:\ie) and \control{(n1.south)}{(90:\ie)}.. controls +(-90:\se) and \control{(n2.north east)}{(45:\se)} .. controls +(-135:\ie) and \control{(n2.south east)}{(135:\ie)} .. controls +(-45:\se) and \control{(n3.north east)}{(45:\se)} .. controls +(-135:\ie) and \control{(n3.south east)}{(135:\ie)} .. controls + (-45:\lle) and \control{(n1.east)}{(0:\le)} .. controls +(180:\ie) and \control{(n1.north)}{(-90:\ie)} .. controls +(90:\me) and \control{(n0.north)}{(90:\me)} .. controls +(-90:\ie) and \control{(n0.west)}{(0:\ie)};
\end{scope}

\begin{scope}[xshift = -7cm, yshift = -3cm]
\coordinate (c0) at (0,0);
\coordinate (c1) at (1,0);
\coordinate (c2) at (0.5,-0.8);
\coordinate (c3) at (0.5,-1.8);
\foreach \i in {0,1,2,3}{\node (n\i) [circle, inner sep = \e cm] at (c\i) {};}

\draw (n0.west) .. controls +(180:\le) and \control{(n3.south west)}{(-135:\lle)} .. controls +(45:\ie) and \control{(n3.south east)}{(135:\ie)} .. controls + (-45:\lle) and \control{(n1.east)}{(0:\le)} .. controls +(180:\ie) and \control{(n1.south)}{(90:\ie)} .. controls +(-90:\se) and \control{(n2.north east)}{(45:\se)} .. controls +(-135:\ie) and \control{(n2.south east)}{(135:\ie)} .. controls +(-45:\se) and \control{(n3.north east)}{(45:\se)} .. controls +(-135:\ie) and \control{(n3.north west)}{(-45:\ie)} .. controls +(135:\se) and \control{(n2.south west)}{(-135:\se)} .. controls +(45:\ie) and \control{(n2.north west)}{(-45:\ie)} .. controls +(135:\se) and \control{(n0.south)}{(-90:\se)} .. controls +(90:\ie) and \control{(n0.east)}{(180:\ie)} -- (n1.west) .. controls +(0:\ie) and \control{(n1.north)}{(-90:\ie)} .. controls +(90:\me) and \control{(n0.north)}{(90:\me)} .. controls +(-90:\ie) and \control{(n0.west)}{(0:\ie)};
\end{scope}

\begin{scope}[xshift = -3.5cm, yshift = -3cm]
\coordinate (c0) at (0,0);
\coordinate (c1) at (1,0);
\coordinate (c2) at (0.5,-0.8);
\coordinate (c3) at (0.5,-1.8);
\foreach \i in {0,1,2,3}{\node (n\i) [circle, inner sep = \e cm] at (c\i) {};}

\draw (n0.west) .. controls +(180:\le) and \control{(n3.south west)}{(-135:\lle)} .. controls +(45:\ie) and \control{(n3.north west)}{(-45:\ie)} .. controls +(135:\se) and \control{(n2.south west)}{(-135:\se)} .. controls +(45:\ie) and \control{(n2.south east)}{(135:\ie)} .. controls +(-45:\se) and \control{(n3.north east)}{(45:\se)} .. controls +(-135:\ie) and \control{(n3.south east)}{(135:\ie)} .. controls + (-45:\lle) and \control{(n1.east)}{(0:\le)} .. controls +(180:\ie) and \control{(n1.south)}{(90:\ie)} .. controls +(-90:\se) and \control{(n2.north east)}{(45:\se)} .. controls +(-135:\ie) and \control{(n2.north west)}{(-45:\ie)} .. controls +(135:\se) and \control{(n0.south)}{(-90:\se)} .. controls +(90:\ie) and \control{(n0.east)}{(180:\ie)} -- (n1.west) .. controls +(0:\ie) and \control{(n1.north)}{(-90:\ie)} .. controls +(90:\me) and \control{(n0.north)}{(90:\me)} .. controls +(-90:\ie) and \control{(n0.west)}{(0:\ie)};
\end{scope}

\begin{scope}[xshift = 3.5cm, yshift = -3cm]
\coordinate (c0) at (0,0);
\coordinate (c1) at (1,0);
\coordinate (c2) at (0.5,-0.8);
\coordinate (c3) at (0.5,-1.8);
\foreach \i in {0,1,2,3}{\node (n\i) [circle, inner sep = \e cm] at (c\i) {};}

\draw (n0.west) .. controls +(180:\le) and \control{(n3.south west)}{(-135:\lle)} .. controls +(45:\ie) and \control{(n3.north west)}{(-45:\ie)} .. controls +(135:\se) and \control{(n2.south west)}{(-135:\se)} .. controls +(45:\ie) and \control{(n2.south east)}{(135:\ie)} .. controls +(-45:\se) and \control{(n3.north east)}{(45:\se)} .. controls +(-135:\ie) and \control{(n3.south east)}{(135:\ie)} .. controls + (-45:\lle) and \control{(n1.east)}{(0:\le)} .. controls +(180:\ie) and \control{(n1.north)}{(-90:\ie)} .. controls +(90:\me) and \control{(n0.north)}{(90:\me)} .. controls +(-90:\ie) and \control{(n0.east)}{(180:\ie)} -- (n1.west) .. controls +(0:\ie) and \control{(n1.south)}{(90:\ie)} .. controls +(-90:\se) and \control{(n2.north east)}{(45:\se)} .. controls +(-135:\ie) and \control{(n2.north west)}{(-45:\ie)} .. controls +(135:\se) and \control{(n0.south)}{(-90:\se)} .. controls +(90:\ie) and \control{(n0.west)}{(0:\ie)};
\end{scope}

\begin{scope}[xshift = 7cm, yshift = -3cm]
\coordinate (c0) at (0,0);
\coordinate (c1) at (1,0);
\coordinate (c2) at (0.5,-0.8);
\coordinate (c3) at (0.5,-1.8);
\foreach \i in {0,1,2,3}{\node (n\i) [circle, inner sep = \e cm] at (c\i) {};}

\draw (n0.west) .. controls +(180:\le) and \control{(n3.south west)}{(-135:\lle)} .. controls +(45:\ie) and \control{(n3.south east)}{(135:\ie)} .. controls + (-45:\lle) and \control{(n1.east)}{(0:\le)} .. controls +(180:\ie) and \control{(n1.north)}{(-90:\ie)} .. controls +(90:\me) and \control{(n0.north)}{(90:\me)} .. controls +(-90:\ie) and \control{(n0.east)}{(180:\ie)} -- (n1.west) .. controls +(0:\ie) and \control{(n1.south)}{(90:\ie)} .. controls +(-90:\se) and \control{(n2.north east)}{(45:\se)} .. controls +(-135:\ie) and \control{(n2.south east)}{(135:\ie)} .. controls +(-45:\se) and \control{(n3.north east)}{(45:\se)} .. controls +(-135:\ie) and \control{(n3.north west)}{(-45:\ie)} .. controls +(135:\se) and \control{(n2.south west)}{(-135:\se)} .. controls +(45:\ie) and \control{(n2.north west)}{(-45:\ie)} .. controls +(135:\se) and \control{(n0.south)}{(-90:\se)} .. controls +(90:\ie) and \control{(n0.west)}{(0:\ie)};
\end{scope}

\begin{scope}[xshift = -4cm, yshift = -8cm]
\coordinate (c0) at (0.5,0.3);
\coordinate (c1) at (0,-0.8);
\coordinate (c2) at (1,-0.8);
\coordinate (c3) at (0.5,-1.8);
\foreach \i in {0,1,2,3}{\node (n\i) [circle, inner sep = \e cm] at (c\i) {};}

\draw (n0.north west) .. controls +(135:\lle) and \control{(n0.north east)}{(45:\lle)} .. controls +(-135:\ie) and \control{(n0.south east)}{(135:\ie)} .. controls +(-45:\se) and \control{(n2.north)}{(90:\se)} .. controls +(-90:\ie) and \control{(n2.west)}{(0:\ie)} -- (n1.east) .. controls +(180:\ie) and \control{(n1.south)}{(90:\ie)} .. controls +(-90:\se) and \control{(n3.north west)}{(135:\se)} .. controls +(-45:\ie) and \control{(n3.north east)}{(-135:\ie)} .. controls +(45:\se) and \control{(n2.south)}{(-90:\se)} .. controls +(90:\ie) and \control{(n2.east)}{(180:\ie)} .. controls +(0:\le) and \control{(n3.south east)}{(-45:\le)} .. controls +(135:\ie) and \control{(n3.south west)}{(45:\ie)} .. controls +(-135:\le) and \control{(n1.west)}{(180:\le)} .. controls +(0:\ie) and \control{(n1.north)}{(-90:\ie)} .. controls +(90:\se) and \control{(n0.south west)}{(-135:\se)} .. controls +(45:\ie) and \control{(n0.north west)}{(-45:\ie)};
\end{scope}

\begin{scope}[yshift = -8cm]
\coordinate (c0) at (0.5,0.3);
\coordinate (c1) at (0,-0.8);
\coordinate (c2) at (1,-0.8);
\coordinate (c3) at (0.5,-1.8);
\foreach \i in {0,1,2,3}{\node (n\i) [circle, inner sep = \e cm] at (c\i) {};}

\draw (n0.north west) .. controls +(135:\lle) and \control{(n0.north east)}{(45:\lle)} .. controls +(-135:\ie) and \control{(n0.south east)}{(135:\ie)} .. controls +(-45:\se) and \control{(n2.north)}{(90:\se)} .. controls +(-90:\ie) and \control{(n2.east)}{(180:\ie)} .. controls +(0:\le) and \control{(n3.south east)}{(-45:\le)} .. controls +(135:\ie) and \control{(n3.north east)}{(-135:\ie)} .. controls +(45:\se) and \control{(n2.south)}{(-90:\se)} .. controls +(90:\ie) and \control{(n2.west)}{(0:\ie)} -- (n1.east) .. controls +(180:\ie) and \control{(n1.south)}{(90:\ie)} .. controls +(-90:\se) and \control{(n3.north west)}{(135:\se)} .. controls +(-45:\ie) and \control{(n3.south west)}{(45:\ie)} .. controls +(-135:\le) and \control{(n1.west)}{(180:\le)} .. controls +(0:\ie) and \control{(n1.north)}{(-90:\ie)} .. controls +(90:\se) and \control{(n0.south west)}{(-135:\se)} .. controls +(45:\ie) and \control{(n0.north west)}{(-45:\ie)};
\end{scope}

\begin{scope}[xshift = 4cm, yshift = -8cm]
\coordinate (c0) at (0.5,0.3);
\coordinate (c1) at (0,-0.8);
\coordinate (c2) at (1,-0.8);
\coordinate (c3) at (0.5,-1.8);
\foreach \i in {0,1,2,3}{\node (n\i) [circle, inner sep = \e cm] at (c\i) {};}

\draw (n0.north west) .. controls +(135:\lle) and \control{(n0.north east)}{(45:\lle)} .. controls +(-135:\ie) and \control{(n0.south east)}{(135:\ie)} .. controls +(-45:\se) and \control{(n2.north)}{(90:\se)} .. controls +(-90:\ie) and \control{(n2.east)}{(180:\ie)} .. controls +(0:\le) and \control{(n3.south east)}{(-45:\le)} .. controls +(135:\ie) and \control{(n3.south west)}{(45:\ie)} .. controls +(-135:\le) and \control{(n1.west)}{(180:\le)} .. controls +(0:\ie) and \control{(n1.south)}{(90:\ie)} .. controls +(-90:\se) and \control{(n3.north west)}{(135:\se)} .. controls +(-135:\ie) and \control{(n3.north east)}{(-45:\ie)} .. controls +(45:\se) and \control{(n2.south)}{(-90:\se)} .. controls +(90:\ie) and \control{(n2.west)}{(0:\ie)} -- (n1.east) .. controls +(180:\ie) and \control{(n1.north)}{(-90:\ie)} .. controls +(90:\se) and \control{(n0.south west)}{(-135:\se)} .. controls +(45:\ie) and \control{(n0.north west)}{(-45:\ie)};
\end{scope}

\begin{scope}[xshift = 0.5cm, yshift = -12.5cm]
\coordinate (c0) at (0,0);
\coordinate (c1) at (-0.5,-0.8);
\coordinate (c2) at (-0.5,-2);
\foreach \i in {0,1}{\node (n\i) [circle, inner sep = \e cm] at (c\i) {};}

\draw (n0.north west) .. controls +(135:\lle) and \control{(n0.north east)}{(45:\lle)} .. controls +(-135:\ie) and \control{(n0.south east)}{(135:\ie)} .. controls +(-45:\le) and \control{(c2)}{(0:1.5)} .. controls +(180:\le) and \control{(n1.west)}{(180:\le)} .. controls +(0:\ie) and \control{(n1.south)}{(90:\ie)} .. controls +(-90:\le) and \control{(n1.east)}{(0:\le)} .. controls +(180:\ie) and \control{(n1.north)}{(-90:\ie)} .. controls +(90:\se) and \control{(n0.south west)}{(-135:\se)} .. controls +(45:\ie) and \control{(n0.north west)}{(-45:\ie)};
\end{scope}

\draw [Stealth-Stealth] (0.5,1) -- +(0,1) node [midway, right] {M2a};
\draw [Stealth-Stealth] (0.5,-2) -- +(0,1) node [midway, right] {M2a};
\draw [Stealth-Stealth] (0.5,-6.25) -- +(0,1) node [midway, right] {M3b};
\draw [Stealth-Stealth] (-5.5,-2.75) -- +(1.5,0) node [midway, above] {M2b};
\draw [Stealth-Stealth] (5,-2.75) -- +(1.5,0) node [midway, above] {M2b};
\draw [Stealth-Stealth] (-2.25,-8) -- +(1.5,0) node [midway, above] {M2b};
\draw [Stealth-Stealth] (-5.75,-5.5) -- +(1.5,-1.5) node [midway, above right] {M3a};
\draw [Stealth-Stealth] (-1.75,-5.25) -- +(1.5,-1.5) node [midway, below left] {M3b};
\draw [Stealth-Stealth] (6.75,-5.5) -- +(-1.5,-1.5) node [midway, above left] {M3a};
\draw [Stealth-Stealth] (2.75,-5.25) -- +(-1.5,-1.5) node [midway, below right] {M3b};
\draw [Stealth-Stealth] (3.25,-10) -- +(-2,-1.5) node [midway, below right] {M2a};
\end{scope}

\draw (-1.5,3.25) -- (-1.5,-14.25);
\draw (-1.5,3) -- +(-0.3,0) node [left] {$0$};
\draw (-1.5,-14) -- +(-0.3,0) node [left] {$1$};
\draw [gray] (-1.5,0) -- +(-0.2,0) node [left] {$t_1$};
\draw [gray] (-1.5,-4) -- +(-0.2,0) node [left] {$t_2$};
\draw [gray] (-1.5,-8.5) -- +(-0.2,0) node [left] {$t_3$};
\draw [gray] (-1.5,-12) -- +(-0.2,0) node [left] {$t_4$};

\begin{scope}[blue!80!black]
\draw (-1.5,2) -- +(-0.1,0) node [left] {$m_1$};
\path (-1.5,1.5) -- +(-0.1,0) node [left] {\small $\cdot$};
\path (-1.5,1.4) -- +(-0.1,0) node [left] {\small $\cdot$};
\path (-1.5,1.3) -- +(-0.1,0) node [left] {\small $\cdot$};
\draw (-1.5,1) -- +(-0.1,0) node [left] {$m_{i_1}$};
\end{scope}

\begin{scope}[blue!80!black, yshift = -3.1cm]
\draw (-1.5,2) -- +(-0.1,0) node [left] {$m_{i_2}$};
\path (-1.5,1.5) -- +(-0.1,0) node [left] {\small $\cdot$};
\path (-1.5,1.4) -- +(-0.1,0) node [left] {\small $\cdot$};
\path (-1.5,1.3) -- +(-0.1,0) node [left] {\small $\cdot$};
\draw (-1.5,1) -- +(-0.1,0) node [left] {$m_{i_3}$};
\end{scope}

\begin{scope}[blue!80!black, yshift = -7.2cm]
\draw (-1.5,2) -- +(-0.1,0) node [left] {$m_{i_4}$};
\path (-1.5,1.5) -- +(-0.1,0) node [left] {\small $\cdot$};
\path (-1.5,1.4) -- +(-0.1,0) node [left] {\small $\cdot$};
\path (-1.5,1.3) -- +(-0.1,0) node [left] {\small $\cdot$};
\draw (-1.5,1) -- +(-0.1,0) node [left] {$m_{i_5}$};
\end{scope}

\begin{scope}[blue!80!black, yshift = -12.1cm]
\draw (-1.5,2) -- +(-0.1,0) node [left] {$m_{i_6}$};
\path (-1.5,1.5) -- +(-0.1,0) node [left] {\small $\cdot$};
\path (-1.5,1.4) -- +(-0.1,0) node [left] {\small $\cdot$};
\path (-1.5,1.3) -- +(-0.1,0) node [left] {\small $\cdot$};
\draw (-1.5,1) -- +(-0.1,0) node [left] {$m_{i_7}$};
\end{scope}

\begin{scope}[blue!80!black, yshift = -14.6cm]
\draw (-1.5,2) -- +(-0.1,0) node [left] {$m_{i_8}$};
\path (-1.5,1.6) -- +(-0.1,0) node [left] {\small $\cdot$};
\path (-1.5,1.5) -- +(-0.1,0) node [left] {\small $\cdot$};
\path (-1.5,1.4) -- +(-0.1,0) node [left] {\small $\cdot$};
\draw (-1.5,1.2) -- +(-0.1,0) node [left] {$m_{i_9}$};
\end{scope}
\end{tikzpicture}
\caption{A graph of resolutions with times.}
\label{pic_ex_graph_times}
\end{center}
\end{figure}

\end{document}